\newtheorem{thm}{Theorem}[section]
\newtheorem*{thm*}{Theorem}
\newtheorem{lem}[thm]{Lemma}
\theoremstyle{definition} \newtheorem{defn}[thm]{Definition}
\newtheorem*{defn*}{Definition}
\theoremstyle{definition} \newtheorem{ex}[thm]{Example}
\newtheorem*{lem*}{Lemma}
\newtheorem{cor}[thm]{Corollary}
\newtheorem*{cor*}{Corollary}
\theoremstyle{definition} \newtheorem{rem}[thm]{Remark}
\theoremstyle{definition} \newtheorem{alg}[thm]{Algorithm}
\newtheorem*{warn}{Warning}
\newtheorem*{conj*}{Conjecture}
\newtheorem{prop}[thm]{Proposition}
\newcommand{\CC}{\mathbb{C}}
\newcommand{\NN}{\mathbb{N}}
\newcommand{\ZZ}{\mathbb{Z}}
\newcommand{\RR}{\mathbb{R}}
\newcommand{\A}{\mathcal{A}}
\let\sectionsymbol\S 
\renewcommand{\S}{\mathfrak{S}}
\newcommand{\I}{\mathcal{I}}
\renewcommand{\th}{\textsuperscript{th}\,}
\DeclareMathOperator{\GL}{GL}
\DeclareMathOperator{\Fl}{Fl}
\newcommand{\Clan}{\mathrm{Clan}}
\newcommand{\clRed}{\mathcal{R}}
\newcommand{\Red}{\mathcal{R}}
\newcommand{\clS}{\mathfrak{S}}
\newcommand{\WYS}{\mathfrak{S}^{\textsc{wy}}}
\newcommand{\ush}{\operatorname{ush}}
\newcommand{\lsh}{\operatorname{lsh}}
\DeclareMathOperator{\comp}{comp}
\newcommand{\nred}{\mathrm{R}}
\DeclareMathOperator{\rev}{rev}
\renewcommand{\neg}{\operatorname{neg}}
\renewcommand{\det}{\operatorname{det}}
\renewcommand{\epsilon}{\varepsilon}
\newcommand{\st}{\operatorname{st}}
\newcommand{\SYT}{\operatorname{SYT}}
\begin{document}

\date{\today}
\author{Brian Burks}
\thanks{The first author was supported by NSF grant DMS-1600223.}
\author{Brendan Pawlowski}
\thanks{}
\title{Reduced words for clans}

\begin{abstract}
Clans are combinatorial objects indexing the orbits of $\GL(\mathbb{C}^p) \times \GL(\mathbb{C}^q)$ on the variety of flags in $\mathbb{C}^{p+q}$. This geometry leads to a partial order on the set of clans analogous to weak Bruhat order on the symmetric group, and we study the saturated chains in this order. We prove an analogue of the Matsumoto-Tits theorem on reduced words in a Coxeter group. We also obtain enumerations of reduced word sets for particular clans in terms of standard tableaux and shifted standard tableaux.
\end{abstract}

\maketitle

\section{Introduction}

For $p, q \in \NN$, a $(p,q)$-\emph{clan} is an involution in the symmetric group $S_{p+q}$, each of whose fixed points is labeled either $+$ or $-$,  for which
    \begin{equation*}
        \text{(\# of fixed points labeled $+$)} - \text{(\# of fixed points labeled $-$)} = p-q.
    \end{equation*}

We draw clans as partial matchings of $[n] := \{1,2,\ldots,n\}$ where $n = p+q$:
\begin{ex} \label{ex:clan-pictures}
    The $(1,2)$-clans are
    \begin{center}
    \begin{tabular}{cccccc}
        \begin{tikzpicture}
            \node (353741629) at (0.0,0) {$+$}; 
            \node (353741630) at (0.333,0) {$-$}; 
            \node (353741631) at (0.667,0) {$-$}; 
        \end{tikzpicture}  &
        \begin{tikzpicture} 
            \node (278017233) at (0.0,0) {$-$}; 
            \node (278017234) at (0.333,0) {$+$}; 
            \node (278017235) at (0.667,0) {$-$}; 
        \end{tikzpicture}    &
        \begin{tikzpicture} 
            \node (768271227) at (0.0,0) {$-$}; 
            \node (768271228) at (0.333,0) {$-$}; 
            \node (768271229) at (0.667,0) {$+$}; 
        \end{tikzpicture}    &
        \begin{tikzpicture}  
            \node[circle, fill, inner sep=0pt, minimum size = 1mm] (99440994) at (0.0,0) {}; 
            \node[circle, fill, inner sep=0pt, minimum size = 1mm] (99440995) at (0.333,0) {}; 
            \node (99440996) at (0.667,0) {$-$}; 
            \draw (99440994) to[bend left=40] (99440995); 
        \end{tikzpicture}    &
        \begin{tikzpicture} 
            \node (20148541) at (0.0,0) {$-$}; 
            \node[circle, fill, inner sep=0pt, minimum size = 1mm] (20148542) at (0.333,0) {}; 
            \node[circle, fill, inner sep=0pt, minimum size = 1mm] (20148543) at (0.667,0) {}; 
            \draw (20148542) to[bend left=40] (20148543); 
        \end{tikzpicture}    &
        \begin{tikzpicture} 
            \node[circle, fill, inner sep=0pt, minimum size = 1mm] (383617511) at (0.0,0) {}; 
            \node (383617512) at (0.333,0) {$-$}; 
            \node[circle, fill, inner sep=0pt, minimum size = 1mm] (383617513) at (0.667,0) {}; 
            \draw (383617511) to[bend left=65] (383617513); 
        \end{tikzpicture}  \\
        $\scriptstyle (1^+)(2^-)(3^-)$ & $\scriptstyle (1^-)(2^+)(3^-)$ & $\scriptstyle (1^-)(2^-)(3^+)$ & $\scriptstyle (1\,2)(3^-)$ & $\scriptstyle (1^-)(2\,3)$ & $\scriptstyle (1\,3)(2^-)$\\
        $\scriptstyle 1^+2^-3^-$ & $\scriptstyle 1^-2^+3^-$ & $\scriptstyle 1^-2^-3^+$ & $\scriptstyle 213^-$ & $\scriptstyle 1^-32$ & $\scriptstyle 32^-1$
    \end{tabular}
\end{center}
where the last two lines are cycle notation and one-line notation, respectively.  When a clan consists entirely of fixed points, we simplify the one-line notation: $--+$ instead of $1^-2^-3^+$.
\end{ex}

Our treatment of clans will be combinatorial and algebraic, but their origins are in geometry. A \emph{complete flag} $F_{\bullet}$ in a vector space $V$ is a chain of subspaces $F_1 \subseteq F_2 \subseteq \cdots \subseteq F_n = V$ where $\dim F_i = i$. Let $\Fl(V)$ denote the set of complete flags in $V$. The (left) action of $\GL(V)$ on $V$ induces an action on $\Fl(V)$, hence an action of any subgroup of $\GL(V)$ on $\Fl(V)$. Identify $\GL(\CC^p) \times \GL(\CC^q)$ with the subgroup of $\GL(\CC^{p+q})$ consisting of block diagonal matrices with a $p \times p$ block in the upper left and a $q \times q$ block in the lower right. Then the orbits of $\GL(\CC^p) \times \GL(\CC^q)$ on $\Fl(\CC^{p+q})$ are in bijection with the $(p,q)$-clans in a natural way \cite{matsuki-oshima-clans, wyser-clans}.

A closed subgroup $K \subseteq \GL(\CC^n)$ is \emph{spherical} if it acts on $\Fl(\CC^n)$ with finitely many orbits (more generally, one can replace $\GL(\CC^n)$ with a reductive algebraic group $G$ and $\Fl(\CC^n)$ with the generalized flag variety of $G$). From the geometry arises a natural partial order on the set of $K$-orbits called \emph{weak order} \cite{richardson-springer}. This poset is graded by codimension and has a unique minimal element. The central objects of this paper are the saturated chains containing the minimal element in the case $K = \GL(\CC^p) \times \GL(\CC^q)$.

The covering relations in weak order are labelled by integers in $[n-1]$, so a saturated chain from the minimal element to a clan $\gamma$ can be identified with a word on the alphabet $[n-1]$, and we call such a word a \emph{reduced word} for $\gamma$. This is by analogy with the more familiar case where $K$ is the subgroup of lower triangular matrices, in which the $K$-orbits on $\Fl(\CC^n)$ are in bijection with permutations of $n$, and their closures are the Schubert varieties in $\Fl(\CC^n)$. There, weak order is defined by the covering relations $ws_i < w$ whenever $ws_i$ has fewer inversions than $w$, where $s_i$ is the adjacent transposition $(i\,\,i{+}1) \in S_n$. The saturated chains from the minimal element to $w$ are then labeled by the \emph{reduced words} of $w$: the minimal-length words $a_1 \cdots a_\ell$ such that $w = s_1 \cdots s_{a_{\ell}}$.

\begin{ex} \label{ex:weak-order} Here is the weak order on $\Clan_{1,2}$ (we have labelled the edges by the adjacent transpositions $s_1, \ldots, s_{n-1}$ rather than the integers $1, \ldots, n-1$):
    \begin{center}
        \begin{tikzpicture}[auto, scale=1.2]
            \node[circle, fill, inner sep=0pt, minimum size = 1mm] (108353022) at (0.0,0) {}; 
            \node (108353023) at (0.333,0) {$-$}; 
            \node[circle, fill, inner sep=0pt, minimum size = 1mm] (108353024) at (0.667,0) {}; 
            \draw (108353022) to[bend left=65] (108353024); 
            \node (9417173) at (-1.0,1.2) {$-$}; 
            \node[circle, fill, inner sep=0pt, minimum size = 1mm] (9417174) at (-0.667,1.2) {}; 
            \node[circle, fill, inner sep=0pt, minimum size = 1mm] (9417175) at (-0.333,1.2) {}; 
            \draw (9417174) to[bend left=40] (9417175); 
            \node[circle, fill, inner sep=0pt, minimum size = 1mm] (940972933) at (1.0,1.2) {}; 
            \node[circle, fill, inner sep=0pt, minimum size = 1mm] (940972934) at (1.333,1.2) {}; 
            \node (940972935) at (1.667,1.2) {$-$}; 
            \draw (940972933) to[bend left=40] (940972934); 
            \node (90526312) at (-1.8,2.4) {$-$}; 
            \node (90526313) at (-1.467,2.4) {$-$}; 
            \node (90526314) at (-1.133,2.4) {$+$}; 
            \node (531055366) at (0.0,2.4) {$-$}; 
            \node (531055367) at (0.333,2.4) {$+$}; 
            \node (531055368) at (0.667,2.4) {$-$}; 
            \node (324345687) at (1.8,2.4) {$+$}; 
            \node (324345688) at (2.133,2.4) {$-$}; 
            \node (324345689) at (2.467,2.4) {$-$}; 

            \draw (.1,.3) to node {$s_1$} (-0.7,1);
            \draw (.5,.3) to[swap] node {$s_2$} (1.3,1);
            \draw (-.9,1.4) to node {$s_2$} (-1.4, 2.1);
            \draw (-.5,1.4) to[swap] node {$s_2$} (0, 2.1);
            \draw (1.1,1.4) to node {$s_1$} (0.6,2.1);
            \draw (1.5,1.4) to[swap] node {$s_1$} (2,2.1);
        \end{tikzpicture} 
    \end{center}
    The reduced words of $-+-$ are $\bf 12$ and $\bf 21$, while the only reduced word of $--+$ is $\bf 12$.
\end{ex}

In $S_n$ (or in any Coxeter group), one can obtain any reduced word for $w$ from any other via simple transformations. Let $\Red(w)$ be the set of reduced words of $w \in S_n$.
\begin{thm}[Matsumoto-Tits]  \label{thm:matsumoto-tits} Let $\equiv$ be the equivalence relation on the set of words on the alphabet $[n-1]$ defined as the transitive closure of the relations
    \begin{align*}
        \cdots ik \cdots \equiv \cdots ki \cdots \qquad &\text{if $|i-k| > 1$}\\
        \cdots iji \cdots \equiv \cdots jij \cdots \qquad &\text{if $|i-j| = 1$}.
    \end{align*}
    Every equivalence class of $\equiv$ either contains no reduced word for any $w \in S_n$, or consists entirely of reduced words. Moreover, the classes containing reduced words are exactly the sets $\Red(w)$ for $w \in S_n$.
\end{thm}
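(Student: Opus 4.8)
The plan is to prove the two assertions separately, the first being a formality and the second the real content. For the first, observe that a commutation move $\cdots ik\cdots \equiv \cdots ki\cdots$ (with $|i-k|>1$) and a braid move $\cdots iji\cdots \equiv \cdots jij\cdots$ (with $|i-j|=1$) each preserve the length of a word, and since $(s_is_k)^2 = \id$ for $|i-k|>1$ and $(s_is_j)^3 = \id$ for $|i-j|=1$ in $S_n$, each also preserves the permutation $s_{a_1}\cdots s_{a_\ell}$ that a word $a_1\cdots a_\ell$ represents. Hence all words in a fixed $\equiv$-class share a common length $L$ and represent a common permutation $w$. Since $L \ge \ell(w)$, with equality exactly when the word is reduced for $w$, either $L = \ell(w)$, in which case every word in the class lies in $\Red(w)$, or $L > \ell(w)$, in which case no word in the class is reduced for any permutation (a reduced word for $v$ has length $\ell(v)$, but such a word would represent $w$, forcing $v = w$ and $L = \ell(w)$, a contradiction).

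It then remains to show that any two reduced words $a = a_1\cdots a_\ell$ and $b = b_1\cdots b_\ell$ for the same $w \in S_n$ satisfy $a \equiv b$; combined with the first part this identifies $\Red(w)$ with a single $\equiv$-class and shows these classes exhaust those containing a reduced word. I would induct on $\ell = \ell(w)$, the cases $\ell \le 1$ being trivial. If $a_1 = b_1 = i$, then $a_2\cdots a_\ell$ and $b_2\cdots b_\ell$ are reduced words for $s_iw$, which has length $\ell - 1$; by induction they are $\equiv$-equivalent, and prepending $i$ gives $a \equiv b$. If instead $a_1 = i \ne j = b_1$, let $m = m_{ij} \in \{2,3\}$ be the order of $s_is_j$ in $S_n$. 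The crucial point is that, because $\ell(s_iw) < \ell(w)$ and $\ell(s_jw) < \ell(w)$, the permutation $w$ has a reduced word $w_1 = \underbrace{iji\cdots}_{m}\, c$ (equivalently $w_2 = \underbrace{jij\cdots}_{m}\, c$, since $w_1 \equiv w_2$ by a single move), where $c$ is a reduced word for $(\underbrace{s_is_j\cdots}_m)^{-1}w$. Now $w_1$ begins with $i$ and $a$ begins with $i$, so the case already treated gives $a \equiv w_1$; likewise $b \equiv w_2$. Therefore $a \equiv w_1 \equiv w_2 \equiv b$.

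The one nontrivial input is the claim that $w$ admits a reduced word beginning with the alternating word $\underbrace{iji\cdots}_m$ whenever both $\ell(s_iw) < \ell(w)$ and $\ell(s_jw) < \ell(w)$; this is where the Coxeter-theoretic substance lies, and it is the step I expect to be the main obstacle. I would deduce it from the theory of minimal-length left-coset representatives for the parabolic subgroup $W_J := \langle s_i, s_j\rangle$, $J = \{i,j\}$: writing $w = u \cdot m(w)$ with $u \in W_J$ and $m(w)$ the minimal-length representative of $W_J w$, one has $\ell(w) = \ell(u) + \ell(m(w))$ and, for $k \in J$, $\ell(s_kw) < \ell(w)$ iff $\ell(s_ku) < \ell(u)$. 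Since $W_J$ is dihedral of order $2m$, the only $u \in W_J$ with both $s_iu < u$ and $s_ju < u$ is its longest element $\underbrace{s_is_j\cdots}_m$; thus $w = \underbrace{s_is_j\cdots}_m\, m(w)$ with lengths adding, which is precisely the desired factorization (take $c$ to be any reduced word for $m(w)$). The facts about coset representatives are themselves consequences of the Exchange/Deletion condition, which for $S_n$ can be verified concretely by bookkeeping with inversions and one-line notation (or deduced from the standard action on the root system). With this lemma in hand the induction closes and the theorem follows.
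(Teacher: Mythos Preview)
The paper does not prove Theorem~\ref{thm:matsumoto-tits}; it is stated there as the classical Matsumoto--Tits lemma, serving as background and motivation for the clan analogue (Theorem~\ref{thm:clan-matsumoto-tits}). So there is no proof in the paper to compare yours against.

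That said, your argument is the standard one and is correct. The first paragraph is routine. The induction in the second paragraph is the usual reduction to the key lemma: if $s_i$ and $s_j$ are both left descents of $w$, then $w$ has a reduced expression beginning with the longest element of $\langle s_i,s_j\rangle$. Your deduction of this from the parabolic coset decomposition $w = u\cdot m(w)$ with $u\in W_J$ and lengths additive is fine, and as you note, that decomposition (together with the descent transfer $\ell(s_k w)<\ell(w)\iff \ell(s_k u)<\ell(u)$ for $k\in J$) follows from the Exchange Condition, which for $S_n$ can be verified directly via inversions. One small point worth making explicit: when you invoke ``the case already treated'' to get $a\equiv w_1$, you are applying the induction hypothesis to the length-$(\ell-1)$ reduced words $a_2\cdots a_\ell$ and $ji\cdots c$ for $s_i w$, not re-using the same-first-letter case at length $\ell$; this is exactly what your setup allows, but it is the step where a careless reader might worry about circularity.
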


Example~\ref{ex:weak-order} shows that an exact analogue of this theorem cannot hold for clans, because different clans can share the same reduced word. However, we do get a similar result by relaxing the constraint that each reduced word set must be a single equivalence class. Let $\Red(\gamma)$ be the set of reduced words for a clan $\gamma$, and let $\Clan_{p,q}$ be the set of $(p,q)$-clans.

\begin{thm} \label{thm:clan-matsumoto-tits} Let $\equiv$ be the equivalence relation on the set of words on $[n-1]$ defined as the transitive closure of the relations $a_1 a_2 \cdots a_{\ell} \equiv (n-a_1)a_2 \cdots a_{\ell}$ together with the Coxeter relations of Theorem~\ref{thm:matsumoto-tits}. Every equivalence class of $\equiv$ either contains no reduced word for any $\gamma \in \Clan_{p,q}$, or consists entirely of reduced words. Also, when restricted to reduced words, $\equiv$ is the strongest equivalence relation for which each $\Red(\gamma)$ is a union of equivalence classes. In other words, $a \equiv b$ if and only if
    \begin{equation*} 
        \{\gamma \in \Clan_{p,q} : a \in \Red(\gamma)\} = \{\gamma \in \Clan_{p,q} : b \in \Red(\gamma)\}.
    \end{equation*}
\end{thm}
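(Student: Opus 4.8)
The plan is to adapt the standard proof of the Matsumoto--Tits theorem (Theorem~\ref{thm:matsumoto-tits}) to the set-valued setting forced on us by the fact that a reduced word no longer determines a single clan. For a word $w$ on $[n-1]$, write $T(w) = \{\gamma \in \Clan_{p,q} : w \in \Red(\gamma)\}$. The substance of the theorem is that, on reduced words, $w \equiv w'$ if and only if $T(w) = T(w')$ (the ``in other words'' clause), together with the claim that a generating relation applied to a reduced word produces a reduced word, which is what makes each $\equiv$-class either entirely reduced or entirely non-reduced. I would prove the first implication ($w \equiv w' \Rightarrow T(w) = T(w')$, and reduced $\Rightarrow$ reduced) by checking it for each generator. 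The commutation relation is immediate, since consecutive weak-order steps labelled $i$ and $k$ with $|i-k| > 1$ act on disjoint windows and hence may be swapped along any saturated chain from the minimal clan $\gamma_0$, with the same set of endpoints. The first-letter relation is handled using the $i \mapsto n-i$ symmetry of $\gamma_0$, which matches the weak-order edges and chains beginning with $s_{a_1}$ against those beginning with $s_{n-a_1}$. The one genuinely substantial check is the braid relation $\cdots iji \cdots \equiv \cdots jij \cdots$ with $|i-j|=1$: I would reduce to the rank-three parabolic $\langle s_i, s_j\rangle$ -- i.e.\ clans that vary only in a three-position window -- and verify by a finite case analysis, using the explicit combinatorial rule for the monoid action and the type-I/type-II dichotomy of ascents, that the sets of clans reachable by $\cdots iji\cdots$ and by $\cdots jij\cdots$ agree.

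For the converse, suppose $a$ and $b$ are reduced words with $T(a) = T(b)$; we must show $a \equiv b$. Since all clans in a nonempty $T(w)$ have rank $|w|$, we have $|a| = |b| = \ell$, and we induct on $\ell$, the base $\ell = 0$ being trivial. Write $a = i a'$, $b = j b'$. The heart of the argument is a clan analogue of the exchange property: from $T(ia') = T(jb')$ one must recover enough about $i$ versus $j$ to set up the induction. I would argue that the clan-set $T(w)$ determines the set of clans appearing as the first step $\gamma_0 \lessdot \delta$ of a saturated chain realizing $w$, and that the symmetry of $\gamma_0$ forces $i$ and $j$ to coincide, or to be exchanged by $i \leftrightarrow n-i$, or to satisfy $|i-j| \le 1$. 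When $i = j$ (or when $i$ and $j$ differ by the first-letter relation) one peels off the initial edge: the inductive hypothesis applied to $a'$ and the corresponding tail of $b$ finishes the proof after at most one first-letter move. When $|i-j|=1$ one works inside $\langle s_i, s_j\rangle$ and uses the braid relation to rewrite $a$ and $b$ so that their initial letters agree, reducing to the previous case; $|i-j|>1$ is disposed of by commutation.

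Combining the two directions, on reduced words $\equiv$ is exactly the relation ``$T(a) = T(b)$''. Since $\Red(\gamma) = \{w : \gamma \in T(w)\}$ is a union of fibres of $T$, this relation is tautologically the strongest equivalence relation making every $\Red(\gamma)$ a union of classes, so the same holds for $\equiv$; and since a generating relation carries reduced words to reduced words, no equivalence class contains both a reduced and a non-reduced word. I expect the main obstacle to be the braid relation at the clan level -- the branching at type-II ascents makes the bookkeeping substantially more involved than in a Coxeter group -- and, in the converse direction, making rigorous the recovery of the ``first-edge'' data from the set-valued invariant $T(w)$, which is where the loss of the usual ``reduced word determines the element'' property must be compensated.
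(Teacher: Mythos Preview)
Your forward direction is essentially what the paper does: Coxeter relations preserve each $\Red(\gamma)$ by Lemma~\ref{lem:braid-relation-closure} (Richardson--Springer), and the first-letter relation is handled using $\gamma_{p,q}\ast s_{a_1}=\gamma_{p,q}\ast s_{n-a_1}$. So far so good, though you should cite Lemma~\ref{lem:braid-relation-closure} rather than redo the rank-three case analysis.

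The converse direction is where your proposal diverges from the paper and where it has a genuine gap. You want to argue by induction on length that $T(ia')=T(jb')$ forces $i$ and $j$ to be equal, or swapped by $i\leftrightarrow n-i$, or adjacent. But $T(w)$ records only the set of \emph{top} clans $\gamma$; the ``first step'' $\gamma_{p,q}\lessdot\delta$ of a chain realizing $w$ is determined by the letter $w_1$, not by $T(w)$, so your claim that ``the clan-set $T(w)$ determines the set of clans appearing as the first step'' is circular. Concretely, $T(a)=T(b)$ means the atoms $u,v$ with $a\in\Red(u)$, $b\in\Red(v)$ satisfy $\Gamma(u)=\Gamma(v)$; there is no a priori reason their left descent sets should be close, and the trichotomy you assert is exactly the hard content of the theorem. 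An induction that peels off first letters needs an exchange-type lemma you have not supplied, and I do not see how to supply one without essentially rebuilding the paper's machinery.

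The paper takes a completely different, structural route for the converse. It passes to atoms (Definition~\ref{defn:atoms}), encodes each atom by a \emph{labelled shape} (an edge-labelled, marked partial matching recording the order in which Algorithm~\ref{alg:atoms} removes pairs), and proves two key facts: Theorem~\ref{thm:equivalence-relation-1}, that $\Gamma(u)=\Gamma(v)$ iff $u,v$ have the same \emph{unlabelled} shape; and Theorem~\ref{thm:equivalence-relation-atoms}, that the resulting equivalence on atoms is generated by $u\mapsto s_k s_{n-k}u$ with $\ell(s_k s_{n-k}u)=\ell(u)$ and $k<\min(p,q)$. The latter is proved by identifying valid labellings of a fixed unlabelled shape with linear extensions of a poset and using connectedness of the linear-extension graph. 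Once Theorem~\ref{thm:equivalence-relation-atoms} is in hand, the converse is short: given $a\sim b$, pick atoms $v\sim w$, reduce to $w=s_{n-k}s_k v$, choose $a'\in\Red(v)$ starting with $k$, and observe that $(n-k)a_2'\cdots a_\ell'\in\Red(w)$, so $a\equiv a'\equiv b'\equiv b$ via Coxeter moves and one first-letter move. The ``exchange'' you are looking for is thus happening at the level of labelled shapes, not at the level of words, and it requires the poset-of-linear-extensions argument to guarantee that a single generator suffices at each step.
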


In \cite{stanleysymm}, Stanley defined a symmetric function $F_w$ associated to a permutation $w$ in which the coefficient of a squarefree monomial is the number of reduced words of $w$. For many $w$ of interest (e.g. the reverse permutation $n\cdots 21$), the Schur expansion of $F_w$ is simple enough that one obtains enumerations of reduced words in terms of standard tableaux. A formula of Billey-Jockusch-Stanley \cite{billeyjockuschstanley} shows that $F_w$ is a certain limit of Schubert polynomials, which represent the cohomology classes of Schubert varieties in $\Fl(\CC^n)$.

We follow a similar approach to prove some enumerative results for reduced words of clans in Section~\ref{sec:enum}. Wyser and Yong \cite{wyser-yong-clans} define polynomials which represent the cohomology classes of the $\GL(\CC^p) \times \GL(\CC^q)$-orbit closures on $\Fl(\CC^n)$, and a result of Brion \cite{brion} implies an analogue of the Billey-Jockusch-Stanley formula. We define the Stanley symmetric function $F_\gamma$ of a clan $\gamma$ as a limit of the Wyser-Yong polynomials. In particular, the maximal clans in weak order are the \emph{matchless} clans, those whose underlying involution is the identity permutation, and we show in this case that $F_\gamma$ is the product of two Schur polynomials. This gives a simple product formula for the number of reduced words:

\begin{thm} \label{thm:enum-formula} Suppose $\gamma \in \Clan_{p,q}$ is matchless with $+$'s in positions $\phi^+ \subseteq [n]$ and $-$'s in positions $\phi^- = [n] \setminus \phi^+$. Then
    \begin{equation*}
        \#\Red(\gamma) = (pq)! \prod_{\substack{i \in \phi^+ \\ j \in \phi^-}} \frac{1}{|i-j|}.
    \end{equation*}
\end{thm}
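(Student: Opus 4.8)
The plan is to identify the Stanley symmetric function $F_\gamma$ of a matchless clan explicitly and then read off $\#\Red(\gamma)$. As explained above and carried out in Section~\ref{sec:enum}, the Wyser--Yong polynomials together with Brion's theorem yield an analogue of the Billey--Jockusch--Stanley formula, from which one reads off that the coefficient of the squarefree monomial $x_1 x_2 \cdots x_\ell$ in $F_\gamma$ equals $\#\Red(\gamma)$, where $\ell = \ell(\gamma)$ is the rank of $\gamma$ in weak order (equivalently $\ell = \deg F_\gamma$, since the Wyser--Yong polynomial of $\gamma$ is homogeneous of degree $\operatorname{codim} O_\gamma$). So everything reduces to computing $F_\gamma$ for matchless $\gamma$.

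Write $\phi^+ = \{a_1 < \cdots < a_p\}$ and $\phi^- = \{b_1 < \cdots < b_q\}$, and let $\lambda$ be the partition whose multiset of parts is $\{\,\#\{j \in \phi^- : j < a_k\} : 1 \le k \le p\,\}$ and $\mu$ the partition with parts $\{\,\#\{i \in \phi^+ : i < b_k\} : 1 \le k \le q\,\}$. The first --- and I expect hardest --- step is to prove
\[
    F_\gamma = s_\lambda \cdot s_\mu .
\]
Geometrically $\overline{O_\gamma}$ is the locus of flags $F_\bullet$ with $\dim(F_k \cap \CC^p) \ge \#(\phi^+ \cap [k])$ for all $k$, a Schubert-type rank condition relative to the fixed subspace $\CC^p \subseteq \CC^{p+q}$ together with its complementary conditions, and one route is to use this to show directly that the Wyser--Yong polynomial of $\gamma$ factors as a product of two Schubert polynomials for Grassmannian permutations, which stabilise to $s_\lambda$ and $s_\mu$; alternatively one can try to extract the factorisation combinatorially from the formula of \cite{wyser-yong-clans}. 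Observe that each pair $(i,j) \in \phi^+ \times \phi^-$ satisfies exactly one of $j < i$ or $i < j$, contributing respectively a cell to $\lambda$ and a cell to $\mu$; hence $|\lambda| + |\mu| = pq$, and since $F_\gamma$ is homogeneous of degree $\ell(\gamma)$ this incidentally recovers $\ell(\gamma) = pq$.

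Granting $F_\gamma = s_\lambda s_\mu$, the coefficient of $x_1 \cdots x_{pq}$ in $s_\lambda s_\mu$ is the familiar quantity $\binom{pq}{|\lambda|}\,\#\SYT(\lambda)\,\#\SYT(\mu)$ (only the splitting $|\lambda| + |\mu| = pq$ of the variables contributes, and the squarefree coefficient of a Schur polynomial in $|\nu|$ variables is $\#\SYT(\nu)$), which by the hook length formula equals $(pq)!/\bigl(H(\lambda)H(\mu)\bigr)$, where $H(\nu)$ denotes the product of the hook lengths of the cells of $\nu$. It then remains to prove the elementary identity $H(\lambda)H(\mu) = \prod_{i \in \phi^+,\, j \in \phi^-} |i - j|$. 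I would prove this bijectively: send the cell in row $k$ and column $c$ of $\lambda$ to the pair formed by the $k$\th $+$ counted from the right and the $c$\th $-$ counted from the left --- this is a legitimate cell of $\lambda$ exactly when that $-$ precedes that $+$ --- and symmetrically for $\mu$ with $+$ and $-$ interchanged; together these maps biject the cells of $\lambda$ and $\mu$ with the $pq$ pairs in $\phi^+ \times \phi^-$. A direct check then shows that for the cell matched to a pair at positions $i,j$, the arm length equals the number of $-$'s (resp.\ $+$'s) strictly between $i$ and $j$ and the leg length equals the number of $+$'s (resp.\ $-$'s) strictly between them, so the hook length is one more than the number of integers strictly between $i$ and $j$, i.e.\ $|i-j|$. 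Taking the product over all cells gives the identity, and combining with the previous paragraph yields the stated formula.
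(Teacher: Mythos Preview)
Your overall strategy matches the paper's exactly: establish that $F_\gamma$ is a product of two Schur functions, extract the squarefree coefficient as $\binom{pq}{|\lambda|}f^\lambda f^\mu$, and convert hook products into $\prod |i-j|$ via a bijection between cells and $(+,-)$ pairs. The paper does precisely this (Theorem~\ref{thm:matchless-clan-stanley} and Corollary~\ref{cor:clan-word-count}).

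Two points, one a genuine slip and one a real subtlety you have glossed over. First, your partitions are wrong: with your definitions, $\lambda$ and $\mu$ are the \emph{conjugates} of the partitions $\lambda^-(\gamma)$ and $\lambda^+(\gamma)$ that actually appear (the paper counts $-$'s \emph{after} each $+$, not before). For instance, for $\gamma={+}{-}{-}$ the paper gives $F_\gamma=s_{(2)}$ while your formula gives $s_{(1,1)}$, and these are different symmetric functions. Fortunately $f^{\nu^t}=f^\nu$ and $|\nu^t|=|\nu|$, so your enumeration and your hook-length bijection (which is set up consistently with your conjugated $\lambda,\mu$) survive; but the intermediate identity $F_\gamma=s_\lambda s_\mu$ is false as written.

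Second, the step you flag as ``hardest'' is indeed where the work lies, and your sketch understates it. The Wyser--Yong polynomial for matchless $\gamma$ is a product $s_{\lambda^+}(X_{\phi^+})\,s_{\lambda^-}(X_{\phi^-})$ of \emph{flagged} Schur polynomials (Definition~\ref{defn:wyser-yong-schubert}); these are Schubert polynomials for vexillary, not Grassmannian, permutations. Passing to $F_\gamma$ means applying $\lim_{N\to\infty}\pi_{w_N}$ (Lemma~\ref{lem:stabilization}), and the obstacle is that $\pi_{w_N}$ is not a ring homomorphism, so you cannot simply stabilize each factor separately. The paper handles this by choosing a specific reduced word for $w_N$ and tracking, via Lemma~\ref{lem:isobaric}, how each $\pi_i$ increments the flags of the two factors; the disjointness of $\phi^+$ and $\phi^-$ is what makes the argument go through. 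Your route~(1) would need to confront this non-multiplicativity explicitly.
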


The permutation $w \in S_n$ with the most reduced words is the reverse permutation $n\cdots 21$, the unique maximal element in weak order. Similarly, a clan $\gamma \in \Clan_{p,q}$ maximizing $\#\Red(\gamma)$ must be matchless, but otherwise it is not obvious what these clans are. We investigate this question in Section~\ref{sec:max}, including connections to work of Pittel and Romik on random Young tableaux of rectangular shape \cite{pittel-romik} suggested by Theorem~\ref{thm:enum-formula}.

The orbits of the orthogonal group $\operatorname{O}(\CC^n)$ on $\Fl(\CC^n)$ are indexed by the involutions in $S_n$, and the resulting weak order on involutions has been studied by various authors \cite{CJW, HMP1, HMP2, hultman-twisted-involutions, richardson-springer}. If one forgets the signs of fixed points, clan weak order becomes the opposite of involution weak order, and this relationship is explored in Section~\ref{sec:inv-vs-clan}. In particular, from known results on reduced words in involution weak order \cite{HMP4} we deduce another enumeration:
\begin{thm}
    The number of maximal chains in $\Clan_{p,q}$ is 
    \begin{equation*}
        2^{pq} {pq \choose \lambda} \prod_{i=1}^{\min(p,q)} {p+q-2i \choose p-i, q-i}^{-1},
    \end{equation*}
    where $\lambda = (p+q-1,p+q-3,\ldots,p-q+1)$ and ${pq \choose \lambda}$ is the multinomial coefficient ${pq \choose \lambda_1, \ldots, \lambda_\ell}$. This is $2^q$ times the number of marked shifted standard tableaux of shifted shape $\lambda$ (cf. Definition~\ref{def:shifted}).
\end{thm}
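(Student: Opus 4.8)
The plan is to deduce this from the relationship with involution weak order (Section~\ref{sec:inv-vs-clan}) together with the enumeration of maximal chains in involution weak order from \cite{HMP4}, and then to show that the extra factor of $2^q$ arises from the choices of signs that a maximal chain in clan weak order makes that are invisible after forgetting signs.

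First I would set up the bijective framework. A maximal chain in $\Clan_{p,q}$ starts at the minimal clan and ends at a matchless clan. Forgetting signs sends clan weak order to the opposite of involution weak order, so a maximal chain in $\Clan_{p,q}$ projects to a maximal chain in the relevant interval of involution weak order. The maximal involution here is the identity (corresponding to matchless clans), and the involution weak order interval from the longest involution down to... — one must check that the underlying involution of the minimal $(p,q)$-clan is the fixed-point-free-as-much-as-possible involution $(1\,n)(2\,n{-}1)\cdots$, which under the reversal $\clan \mapsto$ involution corresponds in involution weak order to the maximal element whose reduced words were counted in \cite{HMP4}. I would then invoke the formula from \cite{HMP4}: the number of maximal chains there (equivalently, the number of "involution reduced words" of the longest involution in $S_{p+q}$ with the appropriate number of fixed points) equals the number of marked/standard shifted tableaux of shifted shape $\lambda = (p+q-1, p+q-3, \ldots, p-q+1)$, and by the shifted hook length formula this count is $\binom{pq}{\lambda} \prod_{i=1}^{\min(p,q)} \binom{p+q-2i}{p-i,q-i}^{-1}$. (Here I am using that the shifted shape $\lambda$ has $pq$ boxes — its parts form the arithmetic progression from $p-q+1$ to $p+q-1$ with common difference $2$, which sums to $pq$.)

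The remaining point — and this is where the real content is — is to account for the factor $2^{pq}$ on the count of chains (and the factor $2^q$ when comparing to shifted standard tableaux rather than marked ones). The idea: a covering relation in clan weak order that merges two fixed points into an arc, or that acts on signs, is "new" information not recorded in the involution chain. More precisely, I would analyze the fibers of the sign-forgetting map on maximal chains. Each step in the involution chain that creates an arc $(i\,j)$ from two fixed points corresponds in the clan picture to two fixed points $i^{\epsilon_i}, j^{\epsilon_j}$ being joined, and one must determine for which sign patterns this is a valid covering relation and how many lifts exist; separately, steps that "slide" a sign past a fixed point don't change the involution and must be counted. The cleanest way to organize this is probably to show that along any maximal chain there are exactly $pq$ steps of the type that contributes a free binary choice — matching the exponent $2^{pq}$ — perhaps by arguing that $pq$ is both the number of arcs-ever-created-and-destroyed weighted appropriately and the quantity $\sum_{i \in \phi^+, j \in \phi^-, i<j} 1$ is not quite it, so more care is needed; alternatively one can compare lengths: the length of a matchless $(p,q)$-clan in clan weak order exceeds the length of the identity involution in involution weak order by exactly $pq$, which immediately pins down the exponent if each "extra" step is a free binary choice. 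I would make this length comparison explicit using the grading by codimension cited in the introduction.

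Finally, to get the last sentence (the count is $2^q$ times the number of shifted standard tableaux of shape $\lambda$, as opposed to marked ones), I would recall that a marked shifted standard tableau of shape $\lambda$ is a shifted standard tableau together with a marking ($\pm$) on each of the entries lying on the main diagonal, and $\lambda = (p+q-1, \ldots, p-q+1)$ has exactly $\min(p,q) = q$ diagonal cells when $q \le p$ (one for each part, and there are $q$ parts); hence the number of marked shifted standard tableaux is $2^q$ times the number of unmarked ones, giving the claimed relationship. The main obstacle I anticipate is the middle step: correctly identifying the fiber structure of the sign-forgetting map on maximal chains and verifying that it contributes exactly the uniform factor $2^{pq}$ rather than something chain-dependent — this requires a careful case analysis of the clan covering relations (arc-creating vs. sign-moving) and their interaction with the involution-weak-order covering relations, and is the place where the combinatorics of clans genuinely goes beyond the combinatorics of involutions.
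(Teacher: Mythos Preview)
Your overall strategy---project to involution weak order and lift---is the same as the paper's, but your accounting of the powers of $2$ is wrong in a way that breaks the argument.

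The fiber of the sign-forgetting map $\iota$ on maximal chains has size $2^{\min(p,q)} = 2^q$, not $2^{pq}$. Every covering relation $\gamma' \ast s_i = \gamma < \gamma'$ in clan weak order \emph{does} change the underlying involution (Proposition~\ref{prop:clan-vs-inv}(a,b)), so there are no ``sign-sliding'' steps invisible to $\iota$, and a maximal chain in $\Clan_{p,q}$ has the same length $pq$ as its image chain in involution weak order. The binary choices when lifting occur precisely at the covers where $z^{j+1} = z^j s_i$ (an arc is created), and along any maximal involution chain to $w_{p,q}$ there are exactly $\kappa(w_{p,q}) = q$ of those; this is Lemma~\ref{lem:clan-chains-vs-inv-chains} and its Corollary: the number of maximal chains in $\Clan_{p,q}$ is $2^q \,\#\hat\Red(w_{p,q})$.

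The remaining factor $2^{pq-q}$ sits entirely on the involution side: one needs $\#\hat\Red(w_{p,q}) = 2^{pq-q} g^\lambda$, not $g^\lambda$. The paper gets this via symmetric functions (Lemma~\ref{lem:dominant} gives $\hat F_{w_{p,q}} = P_\lambda$, and the coefficient of $x_1\cdots x_{pq}$ in $P_\lambda$ is $2^{|\lambda|-\ell(\lambda)} g^\lambda = 2^{pq-q} g^\lambda$). If you prefer to cite \cite{HMP4} directly, the statement you need is that $\#\hat\Red(w_{p,q})$ equals the number of \emph{marked} shifted standard tableaux of shape $\lambda$---but note that the marks live on the $pq - q$ \emph{off}-diagonal cells, not on the $q$ diagonal cells as you wrote (see Definition~\ref{def:shifted}: primed entries are forbidden on the main diagonal). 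With that correction, $2^q$ from lifting times $2^{pq-q}$ marked tableaux per unmarked one gives the $2^{pq}$, and the final sentence of the theorem is just the identity (\# maximal chains)$/2^q = \#\hat\Red(w_{p,q}) =$ \# marked shifted standard tableaux.
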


\subsection*{Acknowledgements} 
We thank Eric Marberg for useful suggestions, including the question motivating Section~\ref{sec:max}, and Zach Hamaker for pointing out the relevance of the work of Romik and Pittel. This work was done as part of the University of Michigan REU program, and we thank David Speyer and everyone else involved in the program.

\section{Reduced words for clans}

Let $\Clan_{p,q}$ be the set of $(p,q)$-clans. We usually write $n$ to mean $p+q$ without comment. Let $s_i$ be the adjacent transposition $(i\,\,i{+}1)$, and write $\iota(\gamma)$ for the underlying involution of a clan $\gamma$. We define conjugation of $\gamma$ by $s_i$ as follows: take the underlying involution of $s_i \gamma s_i$ to be $s_i \iota(\gamma) s_i$, and give the fixed points of $s_i\gamma s_i$ the same signs that they have in $\gamma$ except that the signs of $i$ and $i+1$ (if any) become the respective signs of $i+1$ and $i$.

\begin{ex}
\begin{gather*}
s_2 (1\,2)(3^-) s_2 = (1\,3)(2^-) \quad \text{and} \quad  s_1 (1\,2)(3^-) s_1 = (1\,2)(3^-);\\
s_2 (1^-)(2^-)(3^+) s_2 = (1^-)(2^+)(3^-).
\end{gather*}
\end{ex}
Conjugation preserves the number of $+$'s and $-$'s, hence the set of $(p,q)$-clans. Imagining a clan as an ordered row of unlabeled nodes, each of which has a strand or a sign attached to it (as in Example~\ref{ex:clan-pictures}), conjugation by $s_i$ simply swaps the $i$\textsuperscript{th} and $(i+1)$\textsuperscript{th} node, with any attached strand or sign being carried along.

Using conjugation we now define a different, partial action of the $s_i$ on clans. 
\begin{itemize}
    \item If $i$ and $i+1$ are fixed points of $\gamma$ of opposite sign, then $\gamma \ast s_i$ is $\gamma$ except that $i$ and $i+1$ are now matched.
    \item If $i$ and $i+1$ are matched in $\gamma$, or are fixed points of equal sign, we leave $\gamma \ast s_i$ undefined.
    \item If $i$ and $i+1$ are not fixed points and are not matched with each other, $\gamma \ast s_i = s_i \gamma s_i$.
\end{itemize}
The operation $\gamma \mapsto \gamma \ast s_i$ is not invertible for the same reason that we leave $\gamma \ast s_i$ undefined in the second case: if $i$ and $i+1$ are matched in $\gamma$ then they ought to be replaced in $\gamma \ast s_i$ by $+-$ or $-+$, but there is no reason to choose one over the other.

Let $\ell(w)$ be the Coxeter length of a permutation $w$ (number of inversions).
\begin{defn} The \emph{weak order} on $\Clan_{p,q}$ is the transitive closure of the relation $\gamma \ast s_i < \gamma$ if $\ell(\iota(\gamma \ast s_i)) > \ell(\iota(\gamma))$. \end{defn}
One should mark the reversal here compared to weak Bruhat order on the symmetric group, which has covering relations $ws_i < w$ whenever $\ell(ws_i) < \ell(w)$. By contrast, the largest elements of $\Clan_{p,q}$ have the \emph{fewest} inversions when viewed as permutations.

\begin{defn}
A clan $\gamma$ is \emph{matchless} if $\iota(\gamma)$ is the identity permutation. 
\end{defn}
There are ${p+q \choose p,q}$ matchless clans in $\Clan_{p,q}$, and they are exactly the maximal elements in weak order. There is a unique minimal element in weak order on $\Clan_{p,q}$, which we will call $\gamma_{p,q}$: its underlying involution is $(1\,\,n)(2\,\,n{-}1)\cdots (m\,\,n{-}m{+}1)$ where $m = \min(p,q)$, and the fixed points $m+1, m+2, \ldots, n-m$ are are all labeled with the sign of $p-q$.
\begin{ex}
    The minimal element $\gamma_{5,3} \in \Clan_{5,3}$ has $|p-q| = 2$ fixed points, labeled $+$ since $p-q > 0$, and $\min(p,q) = 3$ arcs:
    \begin{center}
        \begin{tikzpicture}
            \node[circle, fill, inner sep=0pt, minimum size = 1mm] (403475827) at (0.0,0) {}; 
            \node[circle, fill, inner sep=0pt, minimum size = 1mm] (403475828) at (0.333,0) {}; 
            \node[circle, fill, inner sep=0pt, minimum size = 1mm] (403475829) at (0.667,0) {}; 
            \node (403475830) at (1.0,0) {$+$}; 
            \node (403475831) at (1.333,0) {$+$}; 
            \node[circle, fill, inner sep=0pt, minimum size = 1mm] (403475832) at (1.667,0) {}; 
            \node[circle, fill, inner sep=0pt, minimum size = 1mm] (403475833) at (2.0,0) {}; 
            \node[circle, fill, inner sep=0pt, minimum size = 1mm] (403475834) at (2.333,0) {}; 
            \draw (403475827) to[bend left=84] (403475834);\draw (403475828) to[bend left=82] (403475833);\draw (403475829) to[bend left=75] (403475832); 
        \end{tikzpicture}
    \end{center}
\end{ex}

\begin{defn}
A word $a_1 \cdots a_{\ell}$ with letters in $\NN$ is a \emph{reduced word} for $\gamma \in \Clan_{p,q}$ if there is a saturated chain from the minimal element $\gamma_{p,q} \in \Clan_{p,q}$ to $\gamma$ with edge labels $s_{a_1}, \ldots, s_{a_{\ell}}$ (in that order, beginning with $\gamma_{p,q}$ and ending with $\gamma$). Let $\clRed(\gamma)$ be the set of reduced words of $\gamma$. Similarly, $\Red(w)$ denotes the set of reduced words of a permutation $w \in S_n$.
\end{defn} 
We will use bold for reduced words to distinguish them from permutations.

\begin{ex}\label{ex:reduced-words} From Example~\ref{ex:weak-order} one can see that
    \begin{align*} &\clRed(\!\raisebox{-2mm}{
        \begin{tikzpicture}
            \node[circle, fill, inner sep=0pt, minimum size = 1mm] (836675256) at (0.0,0) {}; 
            \node (836675257) at (0.333,0) {$-$}; 
            \node[circle, fill, inner sep=0pt, minimum size = 1mm] (836675258) at (0.667,0) {}; 
            \draw (836675256) to[bend left=65] (836675258); 
        \end{tikzpicture}}\,)  = \{\epsilon\}\\
        &\clRed(\!\!\!\raisebox{-2mm}{
            \begin{tikzpicture}
                \node (268222532) at (0.0,0) {$-$}; 
                \node[circle, fill, inner sep=0pt, minimum size = 1mm] (268222533) at (0.333,0) {}; 
                \node[circle, fill, inner sep=0pt, minimum size = 1mm] (268222534) at (0.667,0) {}; 
                \draw (268222533) to[bend left=40] (268222534); 
            \end{tikzpicture}}\,)  = \{\bf 1\}\\
        &\clRed(\!\raisebox{-2mm}{
            \begin{tikzpicture}
                \node[circle, fill, inner sep=0pt, minimum size = 1mm] (291777389) at (0.0,0) {}; 
                \node[circle, fill, inner sep=0pt, minimum size = 1mm] (291777390) at (0.333,0) {}; 
                \node (291777391) at (0.667,0) {$-$}; 
                \draw (291777389) to[bend left=40] (291777390); 
            \end{tikzpicture}}\!)  = \{\bf 2\}\\
        &\clRed(--+)  = \{\bf 12\}\\
        &\clRed(-+-) = \{{\bf 12}, \mathbf{21}\}\\
        &\clRed(+--) = \{\bf 21\}
    \end{align*}
    where $\epsilon$ is the empty word. Unlike reduced words in Coxeter groups, a word can be a reduced word for more than one clan.
\end{ex}

\begin{warn} We write reduced words starting at the minimal element $\gamma_{p,q} \in \Clan_{p,q}$ by analogy with reduced words for Coxeter groups. However, if $a_1 \cdots a_{\ell}$ is a reduced word for $\gamma \in \Clan_{p,q}$, then $(\cdots ((\gamma_{p,q} \ast s_{a_1}) \ast s_{a_2}) \ast \cdots )\ast s_{a_\ell}$ need not be defined (although if it is, then it equals $\gamma$). Rather, one must say that $a_1 \cdots a_{\ell}$ is a reduced word for $\gamma$ if $(\cdots ((\gamma \ast s_{a_\ell}) \ast s_{a_{\ell-1}}) \ast \cdots )\ast s_{a_1} = \gamma_{p,q}$ and $\ell$ is minimal.
\end{warn}

\begin{rem} \label{rem:geom-weak-order} The motivation for this definition of weak order on clans comes from geometry. Given any subset $Y \subseteq \Fl(\CC^{n})$ and $1 \leq i < n$, let $Y \ast s_i$ be the subset
    \begin{equation*}
        \{F_{\bullet} : F_1 \subseteq \cdots \subseteq F_{i-1} \subseteq F' \subseteq  F_{i+1} \subseteq \cdots \subseteq F_{n} \text{ is in $Y$ for some $i$-dimensional $F'$}\}.
    \end{equation*}
    In particular, $Y \ast s_i$ contains $Y$. Recall from the introduction that the $\GL(\CC^p) \times \GL(\CC^q)$-orbits on $\Fl(\CC^n)$ can be labeled by $(p,q)$-clans. Letting $Y_{\gamma}$ denote the orbit labeled by $\gamma$, one has $Y_{\gamma} \ast s_i = Y_{\gamma \ast s_i}$ if $\gamma \ast s_i < \gamma$. This operation is important in Schubert calculus: the Zariski closures $\overline{Y}_\gamma$ and $\overline{Y}_{\gamma \ast s_i}$ have associated cohomology classes $[\overline{Y}_\gamma]$ and $[\overline{Y}_{\gamma \ast s_i}]$, and under the Borel isomorphism identifying the cohomology ring $H^*(\Fl(\CC^{n}), \ZZ)$ with a quotient of $\ZZ[x_1, \ldots, x_{n}]$, these two classes are related by a divided difference operator; see Section~\ref{sec:enum}. (This can all be phrased more generally for a complex reductive group $G$ and simple generator $s$ of its Weyl group $W$: in passing from $Y$ to $Y \ast s$, we are first projecting $Y$ from the flag variety of $G$ onto the partial flag variety associated to the parabolic subgroup $\langle s \rangle$ of $W$, and then applying the inverse image of the same projection.)
\end{rem} 

When passing from $\gamma$ to $\gamma \ast s_i < \gamma$, only the $i$\textsuperscript{th} and $(i+1)$\textsuperscript{th} nodes in the matching diagrams change, and it is helpful to have a list of the possible local moves. In Figure~\ref{fig:local-moves}, we have drawn the $i$\textsuperscript{th} and $(i+1)$\textsuperscript{th} nodes of $\gamma$ on the left, and those of $\gamma \ast s_i$ on the right, assuming $\gamma \ast s_i < \gamma$.
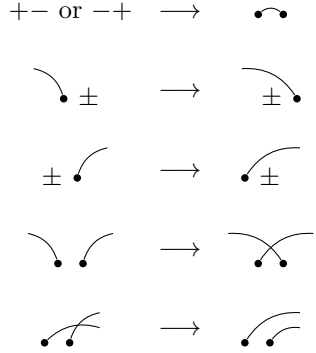
\begin{figure}  \renewcommand{\arraystretch}{2.5} \caption{Possible local changes in a covering relation in weak order} \label{fig:local-moves} 
\begin{tabular}{ccc}   
    $+-$ or $-+$ & $\longrightarrow$ & 
    \begin{tikzpicture}
        \node[circle, fill, inner sep=0pt, minimum size = 1mm] (632987728) at (0.0,0) {}; 
        \node[circle, fill, inner sep=0pt, minimum size = 1mm] (632987729) at (0.333,0) {}; 
        \draw (632987728) to[bend left=40] (632987729); 
    \end{tikzpicture}\\
    
    \raisebox{-2.5mm}{\begin{tikzpicture}
        \node[circle, fill, inner sep=0pt, minimum size = 1mm] (1) at (0.0,0) {};
        \node (2) at (0.333,0) {$\pm$};
        \draw (1) to[bend right] (-.4,.4);
    \end{tikzpicture}} & $\longrightarrow$ &
    \raisebox{-2.5mm}{\begin{tikzpicture}
        \node[circle, fill, inner sep=0pt, minimum size = 1mm] (2) at (0.333,0) {};
        \node (1) at (0,0) {$\pm$};
        \draw (2) to[bend right] (-.4,.4);
    \end{tikzpicture}}\\

    \raisebox{-2.5mm}{\begin{tikzpicture}
        \node[circle, fill, inner sep=0pt, minimum size = 1mm] (2) at (0.333,0) {};
        \node (1) at (0,0) {$\pm$};
        \draw (2) to[bend left] (0.733,.4);
    \end{tikzpicture}} & $\longrightarrow$ &
    \raisebox{-2.5mm}{\begin{tikzpicture}
        \node[circle, fill, inner sep=0pt, minimum size = 1mm] (1) at (0,0) {};
        \node (2) at (0.333,0) {$\pm$};
        \draw (1) to[bend left] (0.733,.4);
    \end{tikzpicture}}\\

    \raisebox{-1.5mm}{\begin{tikzpicture}
        \node[circle, fill, inner sep=0pt, minimum size = 1mm] (1) at (0,0) {};
        \node[circle, fill, inner sep=0pt, minimum size = 1mm] (2) at (0.333,0) {};
        \draw (1) to[bend right] (-0.4,0.4);
        \draw (2) to[bend left] (0.733,0.4);
    \end{tikzpicture}} & $\longrightarrow$ &
    \raisebox{-1.5mm}{\begin{tikzpicture}
        \node[circle, fill, inner sep=0pt, minimum size = 1mm] (1) at (0,0) {};
        \node[circle, fill, inner sep=0pt, minimum size = 1mm] (2) at (0.333,0) {};
        \draw (2) to[bend right] (-0.4,0.4);
        \draw (1) to[bend left] (0.733,0.4);
    \end{tikzpicture}}\\

    \raisebox{-1.5mm}{\begin{tikzpicture}
        \node[circle, fill, inner sep=0pt, minimum size = 1mm] (1) at (0,0) {};
        \node[circle, fill, inner sep=0pt, minimum size = 1mm] (2) at (0.333,0) {};
        \draw (1) to[bend left] (0.733,0.2);
        \draw (2) to[bend left] (0.733,0.4);
    \end{tikzpicture}} & $\longrightarrow$ &
    \raisebox{-1.5mm}{\begin{tikzpicture}
        \node[circle, fill, inner sep=0pt, minimum size = 1mm] (1) at (0,0) {};
        \node[circle, fill, inner sep=0pt, minimum size = 1mm] (2) at (0.333,0) {};
        \draw (2) to[bend left] (0.733,0.2);
        \draw (1) to[bend left] (0.733,0.4);
    \end{tikzpicture}}\\
\end{tabular}
\end{figure}

\begin{lem}[\cite{richardson-springer}, Lemma 3.16] \label{lem:braid-relation-closure} The reduced word set $\clRed(\gamma)$ of any $\gamma \in \Clan_{p,q}$ is closed under the Coxeter relations for $S_{n}$ (cf. Theorem~\ref{thm:matsumoto-tits}). That is, $\clRed(\gamma)$ is closed under the following operations on words:
    \begin{align*}
        \cdots ik \cdots \leadsto \cdots ki \cdots \qquad &\text{if $|i-k| > 1$}\\
        \cdots iji \cdots \leadsto \cdots jij \cdots \qquad &\text{if $|i-j| = 1$}.
    \end{align*}
    Moreover, any $a \in \Red(\gamma)$ is a reduced word for some permutation.
\end{lem}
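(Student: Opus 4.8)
The plan is to prove the two assertions of Lemma~\ref{lem:braid-relation-closure} by translating everything into the geometry of the operation $Y \mapsto Y \ast s_i$ described in Remark~\ref{rem:geom-weak-order}, since that operation is manifestly associative-like in the relevant sense. Concretely, recall that for a subset $Y \subseteq \Fl(\CC^n)$ the set $Y \ast s_i$ is obtained by projecting to the partial flag variety that forgets the $i$-dimensional subspace and then pulling back; equivalently $Y \ast s_i = \pi_i^{-1}\pi_i(Y)$ where $\pi_i$ is that projection. The key structural facts I would isolate first are: (a) $\ast s_i$ is idempotent, i.e. $(Y \ast s_i)\ast s_i = Y \ast s_i$; and (b) the $\ast s_i$ satisfy the braid/commutation relations in the sense that if $|i-k|>1$ then $(Y \ast s_i)\ast s_k = (Y \ast s_k)\ast s_i$, and if $|i-j|=1$ then $((Y\ast s_i)\ast s_j)\ast s_i = ((Y\ast s_j)\ast s_i)\ast s_j$. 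Both of these are standard properties of the ``Demazure/Richardson--Springer'' monoid action: (a) is immediate because $\pi_i^{-1}\pi_i$ is a closure-type operator, and (b) follows because the relevant compositions of projections factor through the partial flag variety forgetting the appropriate pair of subspaces — exactly as in \cite{richardson-springer}, Lemma~3.16, which I would cite for the general statement.

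Having set that up, I would argue as follows. Suppose $a = a_1 \cdots a_\ell \in \Red(\gamma)$. By definition of reduced word and the compatibility $Y_{\gamma}\ast s_i = Y_{\gamma \ast s_i}$ when $\gamma \ast s_i < \gamma$, reading the chain from $\gamma_{p,q}$ up to $\gamma$ and applying $\ast$ repeatedly in the \emph{other} direction (as in the Warning) shows that
\[
    Y_{\gamma_{p,q}} = (\cdots((Y_\gamma \ast s_{a_\ell})\ast s_{a_{\ell-1}})\ast \cdots)\ast s_{a_1},
\]
and $\ell$ is the codimension of $\overline{Y}_\gamma$, hence minimal such that any word of that length realizes $Y_{\gamma_{p,q}}$ from $Y_\gamma$. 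Now apply a Coxeter move to $a$ to get $a'$. Using (b) at the position where the move occurs, together with the fact that all the other $\ast s_{a_k}$ for $k$ outside the modified window are unchanged, we get that $(\cdots(Y_\gamma \ast s_{a'_\ell})\ast\cdots)\ast s_{a'_1}$ is again $Y_{\gamma_{p,q}}$; and since $a'$ has the same length $\ell$ as $a$, which is already minimal, $a'$ is again a reduced word for $\gamma$. (One small subtlety: a braid move $\cdots iji\cdots \leadsto \cdots jij\cdots$ keeps the length the same, and a commutation move trivially does, so minimality is preserved automatically — no length bookkeeping beyond ``same length'' is needed.) That proves closure under the Coxeter relations.

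For the last sentence, I would argue that any $a = a_1 \cdots a_\ell \in \Red(\gamma)$ is a reduced word for the permutation $w := s_{a_1} \cdots s_{a_\ell} \in S_n$. The cleanest route is again geometric: the same monoid-action reasoning applied to \emph{lower-triangular} orbits (Schubert cells) shows that the length of the product $s_{a_1}\cdots s_{a_\ell}$ equals $\ell$ if and only if the successive $\ast s_{a_k}$ operations are all ``nondegenerate'' in the Schubert setting. More directly, I would show $\ell(s_{a_1}\cdots s_{a_k}) = k$ for all $k$ by downward induction using the structure of the chain: at each covering relation $\gamma^{(k-1)} \ast s_{a_k} = \gamma^{(k)}$ with $\ell(\iota(\gamma^{(k)})) > \ell(\iota(\gamma^{(k-1)}))$, the local-moves table (Figure~\ref{fig:local-moves}) shows the $i$th and $(i+1)$st nodes are \emph{not} already matched to each other and not equal fixed points, which is precisely the condition under which conjugation by $s_i$ strictly increases... hmm, here I need to be a little careful about which direction increases $\ell(\iota)$. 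The honest statement to prove is that $\iota(\gamma_{p,q}) \xrightarrow{s_{a_\ell}} \cdots \xrightarrow{s_{a_1}}$ builds up, through conjugations that each change inversion count by exactly one in a consistent direction along the chain, a permutation expression whose length matches the chain length; combined with $\iota(\gamma_{p,q})$ being the longest involution of its ``type'' this forces each partial product to be reduced. I would likely just cite \cite{richardson-springer} for this as well, since it is the analogous statement for general $K$.

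The main obstacle I anticipate is not the closure argument — that is essentially a formal consequence of the monoid relations — but rather pinning down the final claim cleanly without re-deriving the whole Richardson--Springer machinery: one must be sure that ``reduced word for the clan'' (minimal-length chain) really forces ``reduced word for $s_{a_1}\cdots s_{a_\ell}$'' and not merely ``expression for some shorter permutation.'' The resolution is that the number $\ell$ equals $\operatorname{codim} \overline{Y}_\gamma$, and the sequence of conjugations on the underlying involutions is length-additive at each step (each local move in Figure~\ref{fig:local-moves}, read as a conjugation $s_i \iota s_i$, changes the inversion number of the involution by $\pm 1$, and along a saturated chain the sign is controlled), so no cancellation can occur; hence $\ell(w) = \ell$ and $a \in \Red(w)$.
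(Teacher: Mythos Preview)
The paper does not prove this lemma at all --- it simply cites \cite{richardson-springer}, Lemma~3.16 and moves on. So there is no ``paper's own proof'' to compare against; what you have written is essentially a sketch of the Richardson--Springer argument itself.

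Your treatment of the closure under Coxeter relations is the right idea and matches the standard monoid approach. One point worth making explicit: after you show that applying the word $a'$ to $Y_\gamma$ (or rather to $\overline{Y}_\gamma$) again yields $\overline{Y}_{\gamma_{p,q}}$, you need that this forces $a'$ to label a \emph{saturated} chain. The clean reason is that each geometric $\ast s_i$ increases dimension by at most one, so a word of length $\ell$ bridging a dimension gap of exactly $\ell$ must increase dimension at every step; hence every step is a covering. Your phrase ``$\ell$ is already minimal'' gestures at this but does not quite say it.

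Your argument for the second assertion --- that $a$ is a reduced word for some permutation --- is, as you yourself flag, not finished. The route through Figure~\ref{fig:local-moves} and inversion counts of $\iota(\gamma)$ is tangled because those counts can change by $1$ or $2$ depending on the local move, so they do not directly track $\ell(s_{a_1}\cdots s_{a_k})$. The standard argument (which is what Richardson--Springer do) is cleaner: the operators $Y \mapsto \overline{Y} \ast s_i$ depend only on the Demazure product of the word, and the dimension jump from $\overline{Y}_\gamma$ to $\overline{Y}_{\gamma_{p,q}}$ equals the Coxeter length of that product; since the jump here is $\ell$, the product $s_{a_1}\cdots s_{a_\ell}$ has length $\ell$, i.e.\ the word is reduced. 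If you are going to cite \cite{richardson-springer} anyway for this step, that is exactly what the paper does.
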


\begin{defn} \label{defn:atoms} The set of \emph{atoms} of a $(p,q)$-clan $\gamma$ is the set of permutations $\A(\gamma) \subseteq S_n$ such that $\clRed(\gamma) = \bigcup_{w \in \A(\gamma)} \Red(w)$, guaranteed to exist by Lemma~\ref{lem:braid-relation-closure}. \end{defn}

    \begin{ex} \label{ex:atoms} Example~\ref{ex:reduced-words} shows that $\A(--+) = \{s_1 s_2\} = \{231\}$ and $\A(-+-) = \{s_1 s_2, s_2 s_1\} = \{231, 312\}$. A more interesting example: $\A(+--+) = \{4132, 3241\}$, because
        \begin{align*}
            \clRed(+--+) &= \{\bf 2321, 3231, 3213, 1231, 1213, 2123\}\\
            & = \{\bf 2321, 3231, 3213\} \cup \{\bf 1231, 1213, 2123\}\\
            & = \Red(4132) \cup \Red(3241).
        \end{align*}
    \end{ex}

    Given a word $a$, let $\Gamma(a)$ be the set of clans $\gamma \in \Clan_{p,q}$ such that $a \in \Red(\gamma)$. If $w$ is a permutation, we also write $\Gamma(w)$ for the set of clans $\gamma \in \Clan_{p,q}$ such that $w \in \A(\gamma)$.

\begin{defn} Let $\sim$ be the strongest equivalence relation on reduced words for members of $\Clan_{p,q}$ with the property that each $\Red(\gamma)$ for $\gamma \in \Clan_{p,q}$ is a union of equivalence classes. Equivalently, $a \sim b$ if and only if $\Gamma(a) = \Gamma(b)$. \end{defn}

Lemma~\ref{lem:braid-relation-closure} shows that $\sim$ respects the Coxeter relations in the sense that if $a, b \in \Red(w)$ for some $w \in S_n$, then $a \sim b$. We can therefore simplify the problem of describing $\sim$ by ``factoring out'' these relations. If $v, w$ are atoms for some members of $\Clan_{p,q}$, write $v \sim w$ if $\Gamma(v) = \Gamma(w)$. Then $a \sim b$ if and only if $a \in \Red(v), b \in \Red(w)$ for some atoms $v \sim w$.

To understand this equivalence relation on permutations we need a better understanding of the sets $\A(\gamma)$. Given a subset $S \subseteq [n]$ and a clan $\gamma \in \Clan_{p,q}$, call a pair $(i < j) \in S$ \emph{valid} if either $i$ and $j$ are matched by $\gamma$, or if they are fixed points of opposite sign which are adjacent in the sense that there is no $i' \in S$ with $i < i' < j$. Consider the following algorithm which (nondeterministically) builds a permutation $w \in S_n$ by removing one pair of points from $[n]$ at a time and correspondingly deciding upon two entries of $w$. Set $S := [n]$ to start. 
\begin{alg} \label{alg:atoms} \hfill
\begin{enumerate}[(a)]
    \item Choose a valid pair $(i < j) \in S$ such that $\gamma$ has no \emph{matched} pair $i',j' \in S$ with $i' < i < j < j'$. 
    \begin{itemize}
        \item If $i < j$ are matched by $\gamma$, set $w(i) = s+1$ and $w(j) = n-s$, where $s = (n-|S|)/2$ (this is the number of pairs deleted from $[n]$ in step (c) so far).
        \item If $i < j$ are fixed by $\gamma$, set $w(i) = n-s$ and $w(j) = s+1$, with $s$ as above.
    \end{itemize}
    \item If $S$ consists entirely of fixed points of $\gamma$ of the same sign, fill in the remaining undefined entries of $w$ with the unused entries of $[n]$ in increasing order, and return $w$.
    \item If we did not finish in step (b), then replace $S$ with $S \setminus \{i,j\}$ and go back to (a).
\end{enumerate}
\end{alg}

\begin{ex} \label{ex:atom-alg}
    Here is one way this algorithm can run when $\gamma = (1\,9)(2^+)(3^+)(4\,7)(5^{-})(6\,8)$:
    \begin{center} \setlength{\tabcolsep}{2pt}
    \begin{tabular}{cccccccccc}
        \begin{tikzpicture}[scale=0.8]
            \node[circle, fill, inner sep=0pt, minimum size = 1mm] (654424181) at (0.0,0) {}; 
            \node (654424182) at (0.333,0) {$+$}; 
            \node (654424183) at (0.667,0) {$+$}; 
            \node[circle, fill, inner sep=0pt, minimum size = 1mm] (654424184) at (1.0,0) {}; 
            \node (654424185) at (1.333,0) {$-$}; 
            \node[circle, fill, inner sep=0pt, minimum size = 1mm] (654424186) at (1.667,0) {}; 
            \node[circle, fill, inner sep=0pt, minimum size = 1mm] (654424187) at (2.0,0) {}; 
            \node[circle, fill, inner sep=0pt, minimum size = 1mm] (654424188) at (2.333,0) {}; 
            \node[circle, fill, inner sep=0pt, minimum size = 1mm] (654424189) at (2.667,0) {}; 
            \draw (654424181) to[bend left=85] (654424189);\draw (654424184) to[bend left=75] (654424187);\draw (654424186) to[bend left=65] (654424188); 
            \node at (0,-.3) {$\scriptstyle 1$}; \node at (0.333,-.3) {$\scriptstyle 2$};\node at (0.666,-.3) {$\scriptstyle 3$};
            \node at (1,-.3) {$\scriptstyle 4$}; \node at (1.333,-.3) {$\scriptstyle 5$};\node at (1.666,-.3) {$\scriptstyle 6$};
            \node at (2,-.3) {$\scriptstyle 7$}; \node at (2.333,-.3) {$\scriptstyle 8$};\node at (2.666,-.3) {$\scriptstyle 9$};
        \end{tikzpicture} & \raisebox{3mm}{$\to$} &
        \begin{tikzpicture}[scale=0.8]
            \node (435374291) at (0.333,0) {$+$};
            \node (435374292) at (0.667,0) {$+$};
            \node[circle, fill, inner sep=0pt, minimum size = 1mm] (435374293) at (1.0,0) {};
            \node (435374294) at (1.333,0) {$-$};
            \node[circle, fill, inner sep=0pt, minimum size = 1mm] (435374295) at (1.667,0) {};
            \node[circle, fill, inner sep=0pt, minimum size = 1mm] (435374296) at (2.0,0) {};
            \node[circle, fill, inner sep=0pt, minimum size = 1mm] (435374297) at (2.333,0) {};
            \node at (0,-.3) {$\scriptstyle 1$}; \node at (0.333,-.3) {$\scriptstyle 2$};\node at (0.666,-.3) {$\scriptstyle 3$};
            \node at (1,-.3) {$\scriptstyle 4$}; \node at (1.333,-.3) {$\scriptstyle 5$};\node at (1.666,-.3) {$\scriptstyle 6$};
            \node at (2,-.3) {$\scriptstyle 7$}; \node at (2.333,-.3) {$\scriptstyle 8$};\node at (2.666,-.3) {$\scriptstyle 9$};
            \draw (435374293) to[bend left=75] (435374296);\draw (435374295) to[bend left=65] (435374297);
        \end{tikzpicture} & \raisebox{3mm}{$\to$} &
        \begin{tikzpicture}[scale=0.8]
            \node (435374291) at (0.333,0) {$+$};
            \node (435374292) at (0.667,0) {$+$};
            \node (435374294) at (1.333,0) {$-$};
            \node[circle, fill, inner sep=0pt, minimum size = 1mm] (435374295) at (1.667,0) {};
            \node[circle, fill, inner sep=0pt, minimum size = 1mm] (435374297) at (2.333,0) {};
            \node at (0,-.3) {$\scriptstyle 1$}; \node at (0.333,-.3) {$\scriptstyle 2$};\node at (0.666,-.3) {$\scriptstyle 3$};
            \node at (1,-.3) {$\scriptstyle 4$}; \node at (1.333,-.3) {$\scriptstyle 5$};\node at (1.666,-.3) {$\scriptstyle 6$};
            \node at (2,-.3) {$\scriptstyle 7$}; \node at (2.333,-.3) {$\scriptstyle 8$};\node at (2.666,-.3) {$\scriptstyle 9$};
            \draw (435374295) to[bend left=65] (435374297);
        \end{tikzpicture} & \raisebox{3mm}{$\to$} &
        \begin{tikzpicture}[scale=0.8]
            \node (435374291) at (0.333,0) {$+$};
            \node[circle, fill, inner sep=0pt, minimum size = 1mm] (435374295) at (1.667,0) {};
            \node[circle, fill, inner sep=0pt, minimum size = 1mm] (435374297) at (2.333,0) {};
            \node at (0,-.3) {$\scriptstyle 1$}; \node at (0.333,-.3) {$\scriptstyle 2$};\node at (0.666,-.3) {$\scriptstyle 3$};
            \node at (1,-.3) {$\scriptstyle 4$}; \node at (1.333,-.3) {$\scriptstyle 5$};\node at (1.666,-.3) {$\scriptstyle 6$};
            \node at (2,-.3) {$\scriptstyle 7$}; \node at (2.333,-.3) {$\scriptstyle 8$};\node at (2.666,-.3) {$\scriptstyle 9$};
            \draw (435374295) to[bend left=65] (435374297);
        \end{tikzpicture} & \raisebox{3mm}{$\to$} &
        \begin{tikzpicture}[scale=0.8]
            \node (435374291) at (0.333,0) {$+$};
            \node at (0,-.3) {$\scriptstyle 1$}; \node at (0.333,-.3) {$\scriptstyle 2$};\node at (0.666,-.3) {$\scriptstyle 3$};
            \node at (1,-.3) {$\scriptstyle 4$}; \node at (1.333,-.3) {$\scriptstyle 5$};\node at (1.666,-.3) {$\scriptstyle 6$};
            \node at (2,-.3) {$\scriptstyle 7$}; \node at (2.333,-.3) {$\scriptstyle 8$};\node at (2.666,-.3) {$\scriptstyle 9$};
        \end{tikzpicture} \\
        $w = \_\,\_\,\_\,\_\,\_\,\_\,\_\,\_\,\_$ & & $w = 1\_\,\_\,\_\,\_\,\_\,\_\,\_9$ & & $w = 1\_\,\_2\_\,\_\,8\_9$ & & $w = 1\_723\_8\_9$ & & $w = 1\_\,7236849$ &
    \end{tabular}
\end{center}
At this point no more pairs can be selected in step (a), so the algorithm returns $157236849$.
\end{ex}

\begin{thm}[\cite{CJW}] \label{thm:cjw} $\A(\gamma)$ is the set of permutations which can be generated by Algorithm~\ref{alg:atoms}. \end{thm}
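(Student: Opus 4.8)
The plan is to show that the atom set $\A(\gamma)$ and the set $\mathcal{O}(\gamma)$ of permutations returned by Algorithm~\ref{alg:atoms} obey the same recursion along weak order, and then to induct on the rank of $\gamma$ in that order. Toward this, I would first record a recursion for $\A(\gamma)$ which is a formal consequence of the results already available. By Theorem~\ref{thm:matsumoto-tits} every $\Red(w)$ is a single Coxeter-equivalence class of words, and by Lemma~\ref{lem:braid-relation-closure} the set $\clRed(\gamma)$ is a disjoint union of such classes; since distinct $\Red(w)$ are disjoint and a reduced word determines its permutation, this forces $\A(\gamma)=\{w\in S_n:\Red(w)\subseteq\clRed(\gamma)\}$. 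As weak order on $\Clan_{p,q}$ is graded with covering relations $\gamma\ast s_i<\gamma$, a word lies in $\clRed(\gamma)$ for $\gamma\ne\gamma_{p,q}$ exactly when it has the form $\mathbf{a}'i$ with $\gamma\ast s_i<\gamma$ and $\mathbf{a}'\in\clRed(\gamma\ast s_i)$ (truncate the bottom covering of the chain, and conversely extend by it). Combining these two facts,
\begin{equation*}
  \A(\gamma)=\bigcup_{i\,:\,\gamma\ast s_i<\gamma}\bigl\{\,vs_i:v\in\A(\gamma\ast s_i),\ \ell(vs_i)=\ell(v)+1\,\bigr\},\qquad \A(\gamma_{p,q})=\{\id\}:
\end{equation*}
if $\mathbf{a}i\in\Red(w)\subseteq\clRed(\gamma)$ then $\mathbf{a}\in\Red(ws_i)\cap\clRed(\gamma\ast s_i)$, so $ws_i\in\A(\gamma\ast s_i)$, and conversely appending $i$ to a reduced word for $v\in\A(\gamma\ast s_i)$ with $\ell(vs_i)=\ell(v)+1$ yields a reduced word for $vs_i$ in $\clRed(\gamma)$, whence $vs_i\in\A(\gamma)$ by closure under Coxeter moves.

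Next I would prove $\mathcal{O}(\gamma)=\A(\gamma)$ by induction on the weak-order rank of $\gamma$. The base case $\gamma=\gamma_{p,q}$ is immediate: the arcs of $\gamma_{p,q}$ are nested, forcing any run to peel them off outermost-first, and the remaining points share a sign, so the run returns $\id$ and $\mathcal{O}(\gamma_{p,q})=\{\id\}$. For the inductive step, in view of the recursion for $\A$ and the hypothesis $\A(\gamma\ast s_i)=\mathcal{O}(\gamma\ast s_i)$, it suffices to establish the same recursion for $\mathcal{O}$. The inclusion $\supseteq$ is the more routine half: fixing a covering move $\gamma\ast s_i<\gamma$, I would go through the local configurations of Figure~\ref{fig:local-moves} (and their left--right mirrors) and, for each, transport a run of Algorithm~\ref{alg:atoms} on $\gamma\ast s_i$ to a run on $\gamma$. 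For the move $+-$ (or $-+$) $\to$ arc at positions $i,i+1$, for instance, a run on $\gamma\ast s_i$ deletes the short arc $(i,i+1)$ at some stage $s$, and the run on $\gamma$ that agrees with it in every other choice deletes instead the opposite-sign fixed pair $(i,i+1)$ at stage $s$ — a legal choice, since the matched pairs of $\gamma$ are exactly those of $\gamma\ast s_i$ other than $(i,i+1)$, so nothing straddles it; comparing the value-assignment rules in step (a), the outputs differ precisely by exchanging the entries in positions $i$ and $i+1$, i.e.\ $w=vs_i$, and $\ell(vs_i)=\ell(v)+1$ holds automatically. The other local moves (an arc endpoint sliding past an adjacent sign, or two arc endpoints being exchanged) are handled in the same style, with extra care to match up the order in which the affected points leave $S$.

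The inclusion $\subseteq$ is the main obstacle, because the algorithm removes whole pairs while a covering move is a single adjacent transposition, so a covering move must be exhibited inside the bookkeeping of a run. Given $w\in\mathcal{O}(\gamma)$ realized by a run $\mathcal{R}$, I would inspect the pair $\mathcal{R}$ deletes last (the one carrying the innermost values $m$ and $n-m+1$) and argue that, possibly after re-choosing $\mathcal{R}$ among the runs producing $w$, this pair either occupies two consecutive positions $i,i+1$ — so that $\gamma$ shows $+-$ or $-+$ there and $\gamma\ast s_i<\gamma$ is a covering move — or abuts a position that left $S$ earlier in $\mathcal{R}$ through an arc or fixed-pair deletion realizing one of the sliding configurations of Figure~\ref{fig:local-moves}; in either case one obtains $i$ with $\gamma\ast s_i<\gamma$ and with $ws_i$ realized by a run on $\gamma\ast s_i$, so $w=(ws_i)s_i$ lands in the right-hand side of the recursion. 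Proving that such a re-choice is always available — in effect, that the last deletion can be arranged to sit next to a ``live'' covering move, and that the non-straddling requirement in step (a) forces this — is the hard part; I would first test it against the small cases $\A(--+)$, $\A(-+-)$, $\A(+--+)$ of Example~\ref{ex:atoms}, where the recursion can be carried out by hand. A possible alternative, bypassing the pair-by-pair description, is to pass through the identification of clan weak order with the opposite of involution weak order (Section~\ref{sec:inv-vs-clan}) and invoke the analogous recursions known for atoms of twisted involutions.
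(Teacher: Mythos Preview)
The paper does not prove this statement: Theorem~\ref{thm:cjw} is simply quoted from \cite{CJW}, with only the parenthetical remark that the reference works with $\mathcal{W}(\gamma)=\{w^{-1}:w\in\A(\gamma)\}$ rather than $\A(\gamma)$. So there is no in-paper argument to compare your attempt against.

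As for your proposal on its own merits, the recursion you record for $\A(\gamma)$ is correct (though the covering you truncate is the \emph{top} one, adjacent to $\gamma$, not the ``bottom'' one as you write), and matching it with an identical recursion for the algorithmic set $\mathcal{O}(\gamma)$ is a sound strategy. But what you have written is an outline, not a proof: you explicitly label the inclusion $\mathcal{O}(\gamma)\subseteq\bigcup_i\{vs_i:v\in\mathcal{O}(\gamma\ast s_i)\}$ as ``the main obstacle'' and leave it open, proposing to check small cases and floating an alternative through Section~\ref{sec:inv-vs-clan}. The specific idea of focusing on the last-deleted pair is fragile. That pair is adjacent in the \emph{surviving} set $S$, not in $[n]$, so it need not sit at consecutive positions nor obviously abut one of the Figure~\ref{fig:local-moves} configurations; the assertion that one can always ``re-choose $\mathcal{R}$ among the runs producing $w$'' to force this is precisely the substantive claim and you give no mechanism for it. A cleaner line would be to take an arbitrary right descent $i$ of $w\in\mathcal{O}(\gamma)$ and argue from the value-assignment rules in step~(a) that positions $i,i{+}1$ are forced into one of the Figure~\ref{fig:local-moves} configurations for $\gamma$, then transport the run to $\gamma\ast s_i$; but carrying that out in full is essentially the content of the cited result.
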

    We note that \cite{CJW} works with the set $\mathcal{W}(\gamma) := \{w^{-1} : w \in \A(\gamma)\}$ rather than our $\A(\gamma)$.

The possible outcomes of Algorithm~\ref{alg:atoms} can be also encoded by recording, for each $i$ which is removed in the course of the algorithm, which step it was removed at.
\begin{defn}
    A \emph{labelled shape} for $\gamma$ is a pair $(\omega, F)$ where $\omega$ is the partial function $[n] \dashrightarrow \NN$ obtained from an instance of Algorithm~\ref{alg:atoms} by setting $\omega(i) = \omega(j) = k$ if $\{i,j\}$ is the $k$\th pair deleted from $[n]$ in step (c) of the algorithm, and $F$ is the subset of the domain of $\omega$ consisting of fixed points of $\gamma$.
\end{defn}
We think of a labelled shape $(\omega,F)$ as the edge-labelled partial matching on $[n]$ with an arc labeled $k$ matching $i$ and $j$ for each $\omega^{-1}(k) = \{i,j\}$, where the arc is marked if $i,j \in F$. We draw these marked arcs as doubled edges. Given this marking, we will omit $F$ from the notation since it can be recovered as the set of endpoints of the marked arcs.

\begin{ex} \label{ex:labelled-shape} The instance of Algorithm~\ref{alg:atoms} in Example~\ref{ex:atom-alg} gives the labelled shape
    \begin{center} \vspace{2mm}
        \begin{tikzpicture}[auto, scale=1.2]
            \node[circle, fill, inner sep=0pt, minimum size = 1mm] (440460523) at (0.0,0) {};
            \node[circle, fill, inner sep=0pt, minimum size = 1mm] (440460524) at (0.333,0) {};
            \node[circle, fill, inner sep=0pt, minimum size = 1mm] (440460525) at (0.667,0) {};
            \node[circle, fill, inner sep=0pt, minimum size = 1mm] (440460526) at (1.0,0) {};
            \node[circle, fill, inner sep=0pt, minimum size = 1mm] (440460527) at (1.333,0) {};
            \node[circle, fill, inner sep=0pt, minimum size = 1mm] (440460528) at (1.667,0) {};
            \node[circle, fill, inner sep=0pt, minimum size = 1mm] (440460529) at (2.0,0) {};
            \node[circle, fill, inner sep=0pt, minimum size = 1mm] (440460530) at (2.333,0) {};
            \node[circle, fill, inner sep=0pt, minimum size = 1mm] (440460531) at (2.667,0) {};
            \draw (440460523) to[bend right=65, swap] node {$\scriptstyle 1$} (440460531);\draw[double,thick] (440460525) to[bend right=65, swap] node {$\scriptstyle 3$} (440460527);\draw (440460526) to[bend right=75,swap] node {$\scriptstyle 2$} (440460529);\draw (440460528) to[bend right=65,swap] node {$\scriptstyle 4$} (440460530);
        \end{tikzpicture}
    \end{center}
    We have drawn the arcs below the baseline to avoid confusion with the matchings in a clan. These diagrams help explain why Theorem~\ref{thm:cjw} is true. When following a maximal chain up from $\gamma_{p,q}$ to $\gamma$, each matching $(k,n{-}k{+}1)$ in $\gamma_{p,q}$ eventually becomes either a matching in $\gamma$ or a pair of opposite-sign fixed points, which we record as an arc labelled $k$ in the labelled shape.
\end{ex}

It is not hard to give a more direct characterization of the labelled shapes of a clan.
\begin{prop} \label{prop:labelled-shape-rules}
    Let $\omega$ be a partial matching on $[n]$ with its $e$ arcs labelled $1, 2, \ldots, e$, where arcs may be marked or unmarked. Then $\omega$ is a labelled shape for $\gamma \in \Clan_{p,q}$ if and only if $e = \min(p,q)$, and for all arcs $\{i < j\}$ of $\omega$,
    \begin{enumerate}[(i)]
        \item $i$ and $j$ are either matched by $\gamma$ or are a pair of fixed points of opposite sign, according to whether the arc $\{i < j\}$ is unmarked or marked respectively.
        \item If $\{i < j\}$ is marked and $i < i' < j$, then $\omega(i')$ is defined and $\omega(i') < \omega(i) = \omega(j)$.
        \item If $\{i' < j'\}$ is an unmarked arc of $\omega$ with $i' < i < j < j'$, then $\omega(i') = \omega(j') < \omega(i) = \omega(j)$.
    \end{enumerate}
\end{prop}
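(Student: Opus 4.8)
The plan is to prove both implications by matching conditions (i)--(iii) directly against the rules of Algorithm~\ref{alg:atoms}, after a preliminary count that pins down the arc set of any labelled shape. First I would record some bookkeeping: let $m$ be the number of matched pairs of $\gamma$ and $f_+, f_-$ the numbers of its fixed points labelled $+$ and $-$, so that $f_+ - f_- = p - q$ and $2m + f_+ + f_- = p+q$ give $f_+ = p-m$, $f_- = q-m$. A short case analysis shows that, for any $S \subseteq [n]$, step~(a) of the algorithm can select a pair unless $S$ consists entirely of fixed points of $\gamma$ of one sign: if $S$ contains a matched pair of $\gamma$, the one with smallest left endpoint is selectable, and otherwise any two $S$-adjacent opposite-sign fixed points work. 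Hence a run deletes pairs until $S$ has that form; each deleted pair is a matched pair or a pair of opposite-sign fixed points, every matched pair is eventually deleted, and since the $\min(f_+,f_-)$ minority fixed points can leave $S$ only inside such pairs, a run deletes exactly $m$ matched pairs and $\min(f_+,f_-) = \min(p,q)-m$ fixed-point pairs, hence $e = \min(p,q)$ pairs in all; moreover $[n]$ minus the endpoints of those pairs is exactly the $|p-q|$ majority-sign fixed points.

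For the forward implication I would take $\omega$ from a run. The count gives $e = \min(p,q)$; condition~(i) is the definition of ``valid pair'' together with that of $F$. For~(ii), a marked arc $\{i<j\}$ was deleted at step $\omega(i)=\omega(j)$ as a pair of opposite-sign fixed points, which were $S$-adjacent at that moment, so any $i'$ with $i<i'<j$ had been deleted earlier and $\omega(i')<\omega(i)$. For~(iii), if $\{i'<j'\}$ is an unmarked arc with $i'<i<j<j'$, then at the step deleting $\{i,j\}$ the matched pair $\{i',j'\}$ cannot still lie in $S$ since step~(a) forbids choosing a pair nested inside a matched pair of $S$, so $\{i',j'\}$ was deleted earlier and $\omega(i')<\omega(i)$.

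For the converse I would assume (i)--(iii) and $e=\min(p,q)$, then run the count backwards: the unmarked arcs lie among the $m$ matched pairs of $\gamma$, the marked arcs form an opposite-sign partial matching of the fixed points and so number at most $\min(f_+,f_-)$, and $e = m + \min(f_+,f_-)$ then forces every matched pair to be an unmarked arc, the marked arcs to exhaust the minority fixed points, and $[n]\setminus\operatorname{dom}(\omega)$ to be exactly the $|p-q|$ majority-sign fixed points. Now run Algorithm~\ref{alg:atoms} guided by $\omega$, selecting $A_k := \omega^{-1}(k)$ at step $k=1,\dots,e$. I would check this is legal: $A_k$ is valid (a matched pair if unmarked; if marked, (ii) makes every point strictly between its endpoints already deleted, so the endpoints are $S$-adjacent), and $A_k$ is not nested in a matched pair still in $S$ (such a pair is an unmarked arc surrounding $A_k$, already deleted by (iii)); step~(b) is not reached before step $e+1$, since until then $S$ still contains $A_k$, which is a matched pair or meets both signs, and it is reached at step $e+1$ because $[n]\setminus\operatorname{dom}(\omega)$ is a set of equal-sign fixed points. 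This run has labelled shape $\omega$ (its marking agreeing by (i)), so $\omega$ is a labelled shape for $\gamma$. The only step with real content is the count in the first paragraph: everything else is a transcription of the algorithm's moves, and the hypothesis $e=\min(p,q)$ is precisely what makes the backwards run terminate correctly.
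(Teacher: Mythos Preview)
Your proof is correct and follows essentially the same approach as the paper: both argue necessity of (ii) and (iii) by reading off the preconditions of step~(a), and sufficiency by observing that those same preconditions let one run the algorithm in the order dictated by $\omega$. Your version is considerably more explicit than the paper's---in particular you spell out the termination/non-stuck argument, the decomposition $e = m + \min(f_+,f_-)$, and the verification that step~(b) fires at exactly step $e+1$---whereas the paper compresses all of this into two sentences.
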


\begin{proof}
    The number of fixed points remaining in step (b) of Algorithm~\ref{alg:atoms} after all possible pairs $\{i,j\}$ have been removed is
    \begin{equation*}
        |(\text{\# of $+$'s in $\gamma$}) - (\text{\# of $-$'s in $\gamma$})| = |p-q|.
    \end{equation*}
    The number of pairs which were removed is therefore $m = \min(p,q)$, so the image of $\omega$ is $[m]$ and every $k \in [m]$ has $|\omega^{-1}(k)| = 2$.

    If the algorithm removes a pair $i,j$ then it must have already removed all pairs $i',j'$ matched by $\gamma$ with $i' < i < j < j'$, so (iii) is necessary, and if $i,j$ were matched by $\gamma$ then this is the only condition needed for $i,j$ to be removable. To remove a pair $i,j$ fixed by $\gamma$ (so $\{i,j\}$ is marked), one also needs that every $i'$ with $i < i' < j$ has already been removed, meaning $\omega(i') < \omega(i) = \omega(j)$ as demanded by (ii).
\end{proof}

Given an atom $w \in \A(\gamma)$, let $\lsh(w)$ be the corresponding labelled shape. Explicitly, the arcs of $\lsh(w)$ are $\{w^{-1}(k), w^{-1}(n{-}k{+}1)\}$ for $k = 1, 2, \ldots, \min(p,q)$, each arc being marked or unmarked according to whether $w^{-1}(k) > w^{-1}(n{-}k{+}1)$ or $w^{-1}(k) < w^{-1}(n{-}k{+}1)$. Recall that we are trying to characterize the equivalence relation on permutations where $v \sim w$ if $\Gamma(v) = \Gamma(w)$. The set $\Gamma(w)$ is easy to compute from $\lsh(w)$, and in fact the edge labelling on $\lsh(w)$ is not even necessary for this.

\begin{defn} The \emph{unlabelled shape} $\ush(w)$ of $w$ is the pair $(\pi, F)$ where $\pi$ is the partial matching obtained from $\lsh(w)$ by removing the arc labels, and $F$ is the set of endpoints of marked arcs in $\lsh(w)$.
\end{defn}
As before, we consider $\ush(w)$ to be a partial matching with some arcs marked and omit mention of $F$.

\begin{ex} Drawing marked arcs as doubled edges, the unlabelled shape of $w = 157236849$ (whose labelled shape is shown in Example~\ref{ex:labelled-shape}) is
    \vspace{0.2cm}
    \begin{center}
        \begin{tikzpicture}[scale=1.2]
            \node[circle, fill, inner sep=0pt, minimum size = 1mm] (4056587) at (0.0,0) {};
            \node[circle, fill, inner sep=0pt, minimum size = 1mm] (4056588) at (0.333,0) {};
            \node[circle, fill, inner sep=0pt, minimum size = 1mm] (4056589) at (0.667,0) {};
            \node[circle, fill, inner sep=0pt, minimum size = 1mm] (4056590) at (1.0,0) {};
            \node[circle, fill, inner sep=0pt, minimum size = 1mm] (4056591) at (1.333,0) {};
            \node[circle, fill, inner sep=0pt, minimum size = 1mm] (4056592) at (1.667,0) {};
            \node[circle, fill, inner sep=0pt, minimum size = 1mm] (4056593) at (2.0,0) {};
            \node[circle, fill, inner sep=0pt, minimum size = 1mm] (4056594) at (2.333,0) {};
            \node[circle, fill, inner sep=0pt, minimum size = 1mm] (4056595) at (2.667,0) {};
            \draw (4056587) to[bend right=55] (4056595);\draw[double, thick] (4056589) to[bend right=65] (4056591);\draw (4056590) to[bend right=75] (4056593);\draw (4056592) to[bend right=65] (4056594);
        \end{tikzpicture}
    \end{center}
\end{ex}

\begin{thm} \label{thm:equivalence-relation-1}
    Let $v$ and $w$ be atoms for some members of $\Clan_{p,q}$. Then $v \sim w$ if and only if $\ush(v) = \ush(w)$.
\end{thm}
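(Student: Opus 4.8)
\medskip
\noindent\textbf{Proof proposal.} The plan is to show first that $\Gamma(w)$ depends only on $\ush(w)$, and then that conversely $\ush(w)$ can be read off from $\Gamma(w)$; together these give $v \sim w \iff \Gamma(v) = \Gamma(w) \iff \ush(v) = \ush(w)$. The starting point is the identification, for any $\gamma \in \Clan_{p,q}$, that $w \in \A(\gamma)$ if and only if $\lsh(w)$ is a labelled shape for $\gamma$: the ``only if'' is the definition of $\lsh$, and the ``if'' holds because $w$ can be recovered from $\lsh(w)$ (read off the positions of $1, \dots, \min(p,q)$ and $n, \dots, n{-}\min(p,q){+}1$ from the labelled and marked arcs, and fill the remaining positions increasingly as in step (b) of Algorithm~\ref{alg:atoms}), so $\lsh$ is injective on the set of atoms. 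By Proposition~\ref{prop:labelled-shape-rules}, $\lsh(w)$ is a labelled shape for $\gamma$ exactly when conditions (i), (ii), (iii) hold; here $e = \min(p,q)$ is automatic since $\lsh(w)$ has $\min(p,q)$ arcs and we work inside a fixed $\Clan_{p,q}$.

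Next I would observe that conditions (ii) and (iii) involve only the labelling $\lsh(w)$ and never mention $\gamma$, and that since $w$ is by hypothesis an atom of \emph{some} clan, $\lsh(w)$ is a bona fide labelled shape and hence already satisfies (ii) and (iii). Therefore $\Gamma(w) = \{\gamma \in \Clan_{p,q} : \text{(i) holds for } \lsh(w) \text{ and } \gamma\}$, and condition (i) refers only to the underlying partial matching and its marking, that is, only to $\ush(w)$. Unwinding (i): $\gamma \in \Gamma(w)$ iff $\iota(\gamma)$ contains every unmarked arc of $\ush(w)$ and the two endpoints of every marked arc are fixed points of $\gamma$ of opposite sign. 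Since unmarked and marked arcs each contribute $0$ to the difference between the number of $+$'s and the number of $-$'s in $\gamma$, a node covered by no arc must in fact be a fixed point of sign $\sgn(p-q)$ — there are exactly $|p-q|$ such nodes and they must account for the entire discrepancy $p-q$, so none of them can be matched. Hence $\Gamma(w)$ is precisely the set of $2^{s}$ clans obtained from $\ush(w)$, where $s$ is the number of marked arcs, by choosing for each marked arc which endpoint carries $+$. In particular $\Gamma(w)$ depends only on $\ush(w)$, which settles the implication $\ush(v) = \ush(w) \Rightarrow v \sim w$.

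For the converse I would recover $\ush(w)$ from $\Gamma(w)$ using the explicit description just obtained. Every clan in $\Gamma(w)$ has exactly the unmarked arcs of $\ush(w)$ as its set of matched pairs, so the unmarked arcs are the pairs $\{i,j\}$ matched in some (equivalently every) $\gamma \in \Gamma(w)$. A node is an endpoint of a marked arc iff it is a fixed point in every $\gamma \in \Gamma(w)$ but its sign is not constant over $\Gamma(w)$ — fixed points covered by no arc have constant sign $\sgn(p-q)$, and endpoints of unmarked arcs are never fixed points. Finally, two marked-arc endpoints $i,j$ lie on a common marked arc iff $\gamma(i)$ and $\gamma(j)$ have opposite signs for every $\gamma \in \Gamma(w)$: this is forced by (i) when $\{i,j\}$ is an arc, while if $i,j$ lie on distinct marked arcs the $2^s$-fold freedom produces a $\gamma \in \Gamma(w)$ with $\gamma(i)$ and $\gamma(j)$ of equal sign; one checks this ``always opposite'' relation is a perfect matching on the marked-arc endpoints. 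Thus $\ush(w)$ is determined by $\Gamma(w)$, so $\Gamma(v) = \Gamma(w)$ forces $\ush(v) = \ush(w)$.

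The step I expect to be the main obstacle is the claim in the second paragraph that $\Gamma(w)$ is the \emph{full} set of $\ush(w)$-consistent clans, not merely a subset of it: this hinges on recognizing that conditions (ii)--(iii) of Proposition~\ref{prop:labelled-shape-rules} are constraints on the labelling alone, already satisfied by $\lsh(w)$, together with the little counting argument that pins every non-arc node to a fixed point of sign $\sgn(p-q)$. Once that is in hand, both directions are essentially bookkeeping.
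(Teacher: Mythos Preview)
Your proposal is correct and follows essentially the same approach as the paper: both arguments establish that $\Gamma(w)$ is exactly the set of $2^s$ clans obtained from $\ush(w)$ by independently orienting each marked arc, and then argue the converse by showing different unlabelled shapes yield different $\Gamma$'s. Your treatment is somewhat more explicit---you invoke Proposition~\ref{prop:labelled-shape-rules} directly and include the counting argument pinning the $|p-q|$ uncovered nodes to sign $\sgn(p-q)$, whereas the paper states the description of $\Gamma(w)$ without spelling this out---and your converse recovers $\ush(w)$ from $\Gamma(w)$ rather than doing the paper's short case analysis on where $\ush(v)$ and $\ush(w)$ first differ, but these are differences of exposition rather than of strategy.
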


\begin{proof} 
    First, $\Gamma(v)$ depends only on $\ush(v)$. Indeed, if $\ush(v)$ has $e$ marked arcs then $\Gamma(v)$ consists of the $2^e$ clans obtained by:
    \begin{itemize}
        \item Replacing each marked arc $\{i,j\}$ by fixed points $i^+, j^-$ or $i^-, j^+$;
        \item Leaving each unmarked arc as a matching;
        \item Leaving each unmatched point as a fixed point whose sign is the sign of $p-q$.
    \end{itemize}
    This shows that if $\ush(v) = \ush(w)$ then $v \sim w$.

    Conversely, suppose $\ush(v) \neq \ush(w)$. If the unmarked arcs of $\ush(v)$ are different from those of $\ush(w)$, then by the previous paragraph every clan in $\Gamma(v)$ has different arcs than every clan in $\Gamma(w)$, so assume all unmarked arcs are the same. Then there must be, say, a marked arc $\{i,j\}$ in $\ush(v)$ such that $i, j$ are not connected by a marked arc in $\ush(w)$. But then there are clans in $\Gamma(w)$ which give the same sign to $i$ and $j$, while every clan in $\Gamma(v)$ gives them opposite signs. In any case, $\Gamma(v) \neq \Gamma(w)$ so $v \not\sim w$.
\end{proof}

An adjacent transposition $s_k$ where $k < \min(p,q)$ acts on a labelled shape $\omega$ by swapping the labels $k$ and $k{+}1$, giving a new partial matching $s_k \omega$ with labelled and possibly marked arcs, although $s_k \omega$ may not be a valid labelled shape for a clan.
\begin{lem} \label{lem:swap-labels} Let $w \in \A(\gamma)$ and $k < \min(p,q)$. Then $s_k \lsh(w)$ is a labelled shape for $\gamma$ if and only if $\ell(s_k s_{n-k} w) = \ell(w)$, and if that holds then $s_k \lsh(w) = \lsh(s_k s_{n-k} w)$. \end{lem}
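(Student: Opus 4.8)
The plan is to translate the lemma into a statement about labelled shapes and then reduce it to a finite check on the relative positions of four points. Write $m = \min(p,q)$ and $v = s_k s_{n-k} w$; since $k < m$ the four indices $k, k+1, n-k, n-k+1$ are distinct, so $s_k$ and $s_{n-k}$ commute and the four positions $a = w^{-1}(k)$, $a' = w^{-1}(k+1)$, $b' = w^{-1}(n-k)$, $b = w^{-1}(n-k+1)$ are distinct. First I would use $v^{-1} = w^{-1} s_{n-k} s_k$ to record that $v^{-1}(k) = w^{-1}(k+1)$, $v^{-1}(k+1) = w^{-1}(k)$, $v^{-1}(n-k) = w^{-1}(n-k+1)$, $v^{-1}(n-k+1) = w^{-1}(n-k)$, and $v^{-1}(j) = w^{-1}(j)$ otherwise; feeding this into the defining formula for $\lsh$ shows that $\lsh(v)$ has the same underlying partial matching as $\lsh(w)$, with all markings unchanged and the labels $k$ and $k+1$ interchanged. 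In other words $\lsh(v) = s_k\lsh(w)$ as marked edge-labelled matchings, with no hypothesis on lengths, so the whole content of the lemma is deciding when $s_k\lsh(w)$ is a \emph{valid} labelled shape for $\gamma$.

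Second, I would reformulate validity combinatorially via Proposition~\ref{prop:labelled-shape-rules}. Let $\alpha$ and $\beta$ be the arcs of $\lsh(w)$ labelled $k$ and $k+1$ (so $\alpha$ has endpoints $\{a,b\}$ and is marked iff $a > b$, and likewise $\beta$ has endpoints $\{a',b'\}$ and is marked iff $a' > b'$); in $s_k\lsh(w)$ these very arcs, with unchanged markings, carry the labels $k+1$ and $k$. Because $s_k\lsh(w)$ has the same underlying marked matching as $\lsh(w)$, it automatically satisfies condition (i), the requirement that every point strictly under a marked arc be an arc endpoint, and the condition that the number of arcs is $m = \min(p,q)$; hence it is a labelled shape for $\gamma$ if and only if its labelling is a linear extension of the partial order $\prec$ on arcs generated by the order constraints of (ii) and (iii) — namely, $A \prec M$ whenever $M$ is marked and $A$ has an endpoint strictly between the endpoints of $M$, or $A$ is unmarked and both endpoints of $M$ lie strictly between those of $A$. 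Since $\lsh(w)$ is itself a valid labelled shape, $\prec$ is acyclic and $\lsh(w)$ is a linear extension of it, with $\alpha, \beta$ in consecutive ranks $k, k+1$. Transposing two consecutive elements of a linear extension again gives a linear extension exactly when they are incomparable, and consecutive elements can only be comparable via a direct relation; that relation cannot be $\beta \prec \alpha$, since $\lsh(w)$ would then fail to extend $\prec$. Therefore $s_k\lsh(w)$ is a labelled shape for $\gamma$ if and only if $\alpha \not\prec \beta$.

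Third, I would match $\alpha \not\prec \beta$ with $\ell(v) = \ell(w)$. Peeling $s_k$ off $v = s_k(s_{n-k}w)$ and then $s_{n-k}$ off $s_{n-k}w$, the standard length rule for left multiplication by an adjacent transposition gives $\ell(v) - \ell(w) = \sgn(b - b') + \sgn(a' - a)$, so $\ell(v) = \ell(w)$ precisely when $a - a'$ and $b - b'$ have the same sign. On the other hand, the validity of $\lsh(w)$ together with the fact that $\alpha$ carries the smaller label forbids $\beta \prec \alpha$, which rules out the configurations in which $\alpha$ is marked with an endpoint of $\beta$ strictly inside it, or $\beta$ is unmarked with $\alpha$ nested strictly inside it. Over the remaining configurations of the four distinct points $a, b, a', b'$ and the two markings — a short case analysis — one checks that $\alpha \prec \beta$ holds exactly when $a - a'$ and $b - b'$ have opposite signs. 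Combining, $s_k\lsh(w)$ is a labelled shape for $\gamma$ iff $\ell(v) = \ell(w)$; and when it is, $s_k\lsh(w) = \lsh(v)$ is a labelled shape for $\gamma$, hence arises from a unique atom of $\gamma$, and that atom is $v$: indeed $v$ and $w$ agree at the middle values $m+1, \dots, n-m$ (which $s_k s_{n-k}$ fixes), so $v$ is the permutation reconstructed from $\lsh(v)$ by Algorithm~\ref{alg:atoms}. Thus $v \in \A(\gamma)$ and $s_k\lsh(w) = \lsh(v)$.

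The step I expect to be the main obstacle is the case analysis in the third paragraph. It is elementary, but one has to keep straight which of $a, b$ (resp.\ $a', b'$) is the left endpoint of $\alpha$ (resp.\ $\beta$) — this is exactly the marking — and one must remember to discard the configurations excluded by the validity of $\lsh(w)$, without which the equivalence would fail. The reductions in the first two paragraphs are routine once the correspondence between atoms and labelled shapes (Theorem~\ref{thm:cjw}, Proposition~\ref{prop:labelled-shape-rules}) is in hand.
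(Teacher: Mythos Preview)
Your proposal is correct. Both your argument and the paper's boil down to a finite case analysis on the relative positions of the four points $w^{-1}(k), w^{-1}(k{+}1), w^{-1}(n{-}k), w^{-1}(n{-}k{+}1)$, so the approaches are essentially the same; the difference is organizational. The paper enumerates one-line notation patterns directly --- listing the six patterns where $\ell(s_k s_{n-k} w) = \ell(w)+2$ and checking they coincide with the failures of conditions (ii)--(iii), then separately ruling out $\ell(s_k s_{n-k} w) = \ell(w)-2$ by showing $\lsh(w)$ itself would be invalid. You instead invoke the linear-extension viewpoint (which the paper only introduces in the proof of the \emph{next} theorem) to reduce everything to checking whether the two consecutive arcs $\alpha, \beta$ are $\prec$-comparable, and then match $\alpha \prec \beta$ with the sign condition $\sgn(a-a') \neq \sgn(b-b')$. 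Your framing is a bit cleaner and makes the subsequent theorem's proof flow more naturally; the paper's direct enumeration is more self-contained for this lemma alone.
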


    \begin{proof} The map $\lsh^{-1}$ sending a labelled shape to its associated atom makes sense when applied to any partial matching with labelled and marked arcs, though the result may not be an atom. In particular, it sends $s_k \lsh(w)$ to the permutation $s_k s_{n-k} w$ regardless of whether the former is a valid labelled shape; here and below, it is helpful to note here that $s_k s_{n-k} = s_{n-k} s_k$ since $k < \min(p,q)$.

        There are two cases in which $s_k \lsh(w)$ is not a valid labelled shape:
        \begin{itemize}
            \item Suppose the arc $\{i < j\}$ of $\lsh(w)$ labeled $k{+}1$ is nested inside the arc $\{i' < j'\}$ labeled $k$, meaning that $i' < i < j < j'$, and that the arc labeled $k$ is unmarked. Then $w$ has the form
            \begin{equation*}
                \cdots k \cdots k{+}1 \cdots n{-}k \cdots n{-}k{+}1 \cdots  \quad \text{or} \quad \cdots k \cdots n{-}k \cdots k{+}1 \cdots n{-}k{+}1 \cdots
            \end{equation*}
            and $\ell(s_k s_{n-k} w) = \ell(w) + 2$.
            \item Suppose the arc $\{i<j\}$ of $\lsh(w)$ labeled $k{+}1$ is marked, and that the arc $\{i'<j'\}$ labeled $k$ has $i < i' < j$ or $i < j' < j$. If $\{i' < j'\}$ is marked, the definition of labeled shape forces $i < i' < j' < j$, so $w$ has the form
            \begin{equation*}
                \cdots n{-}k \cdots n{-}k{+}1 \cdots k \cdots k{+}1\cdots 
            \end{equation*}
            If $\{i'<j'\}$ is unmarked, then depending on exactly where $i',j'$ are positioned with respect to $i,j$, the permutation $w$ has one of the forms 
            \begin{align*}
                &\cdots n{-}k \cdots k \cdots n{-}k{+}1 \cdots  k{+}1 \cdots \\
                &\cdots n{-}k \cdots k \cdots k{+}1 \cdots n{-}k+1 \cdots \\
                &\cdots k \cdots n{-}k \cdots n{-}k+1 \cdots k{+}1 \cdots
            \end{align*}
            In all of these cases, $\ell(s_k s_{n-k} w) = \ell(w)+2$ again.
        \end{itemize}
        Conversely, suppose $\ell(s_k s_{n-k} w) \neq \ell(w)$, so $\ell(s_k s_{n-k} w) = \ell(w) \pm 2$. If $\ell(s_k s_{n-k} w) = \ell(w) + 2$, then $k$ precedes $k+1$ in the one-line notation of $w$ and $n-k$ precedes $n-k+1$. There are $6$ permutations of $k, k{+}1, n{-}k, n{-}k{+}1$ for which this holds, and they are exactly the $6$ cases we considered above in which $s_k \lsh(w)$ is not a valid labelled shape.
        
        So, suppose $\ell(s_k s_{n-k} w) = \ell(w)-2$. We claim that in this case, $\lsh(w)$ could not have been a valid labelled shape to begin with. Now $k+1$ precedes $k$ in $w$ and $n-k+1$ precedes $n-k$, and the $6$ possibilities can be checked directly. If $w$ has one of the forms
        \begin{align*}
            &\cdots n{-}k{+}1 \cdots k{+}1 \cdots k \cdots n{-}k \cdots\\
            &\cdots n{-}k{+}1 \cdots k{+}1 \cdots n{-}k \cdots k \cdots\\
            &\cdots n{-}k{+}1 \cdots n{-}k \cdots k{+}1 \cdots k \cdots\\
            &\cdots k{+}1 \cdots n{-}k{+}1 \cdots k \cdots n{-}k \cdots
        \end{align*}
        then the arc $\{i < j\}$ in $\lsh(w)$ labelled $k$ is marked, yet there is $i < i' < j$ such that $i'$ is labelled $k{+}1$, contradicting Proposition~\ref{prop:labelled-shape-rules}(ii). If $w$ has the form
        \begin{equation*}
            \cdots k{+}1 \cdots k \cdots n{-}k{+}1 \cdots n{-}k \cdots  \quad \text{or} \quad  \cdots k{+}1 \cdots n{-}k{+}1 \cdots k \cdots n{-}k \cdots
        \end{equation*}
        then the arc in $\lsh(w)$ labelled $k$ is nested inside the unmarked arc labelled $k{+}1$, contradicting Proposition~\ref{prop:labelled-shape-rules}(iii).
        \end{proof}

    \begin{thm} \label{thm:equivalence-relation-atoms} The equivalence relation $\sim$ on atoms for $\Clan_{p,q}$ is the transitive closure of the relations $u \sim s_k s_{n-k}u$ where $\ell(s_k s_{n-k}u) = \ell(u)$ and $k < \min(p,q)$. \end{thm}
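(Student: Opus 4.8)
The plan is to deduce the theorem from Theorem~\ref{thm:equivalence-relation-1}, which already identifies $v \sim w$, for atoms $v,w$, with the condition $\ush(v) = \ush(w)$. So the task reduces to showing that two atoms share an unlabelled shape precisely when they are linked by a chain of length-preserving moves $u \mapsto s_k s_{n-k}u$ with $k < \min(p,q)$ passing through atoms. The containment "transitive closure of the moves $\subseteq\ \sim$'' is the short half: if $u \in \A(\gamma)$ and $\ell(s_k s_{n-k}u) = \ell(u)$, then Lemma~\ref{lem:swap-labels} gives $s_k \lsh(u) = \lsh(s_k s_{n-k}u)$, so $s_k s_{n-k}u$ is again an atom of $\gamma$; and since passing from $\lsh(u)$ to $s_k\lsh(u)$ only swaps the arc labels $k$ and $k{+}1$ while fixing the underlying partial matching and the marked arcs, we get $\ush(s_k s_{n-k}u) = \ush(u)$, hence $s_k s_{n-k}u \sim u$ by Theorem~\ref{thm:equivalence-relation-1}.

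For the reverse containment I would fix atoms $v \sim w$, pick some $\gamma \in \Gamma(v) = \Gamma(w)$ (nonempty since $v$ is an atom for something), so that $v, w \in \A(\gamma)$, and note that $\lsh(v)$ and $\lsh(w)$ are labelled shapes for $\gamma$ with the same underlying marked partial matching $\pi$, which has $m = \min(p,q)$ arcs. The structural point, read off from Proposition~\ref{prop:labelled-shape-rules}(ii)--(iii), is that the labellings of $\pi$ that give labelled shapes for $\gamma$ are exactly the linear extensions of the poset $P_\pi$ on the arcs of $\pi$ generated by the relations: $B \prec A$ whenever $A$ is a marked arc and $B$ has an endpoint strictly between the two endpoints of $A$; and $B \prec A$ whenever $B$ is an unmarked arc that nests $A$. (The remaining conditions in Proposition~\ref{prop:labelled-shape-rules} depend only on $\pi$ and $\gamma$, not on the labelling, and the existence of $\lsh(v)$ shows this relation is acyclic, so its transitive closure $P_\pi$ is a genuine partial order.) Under $\lsh$ the atoms $v$ and $w$ become two linear extensions of $P_\pi$, and — using that $\lsh^{-1}$ intertwines "swap the labels $k$ and $k{+}1$'' with "multiply by $s_k s_{n-k}$'', exactly as in the proof of Lemma~\ref{lem:swap-labels} — a length-preserving move $u \mapsto s_k s_{n-k}u$ between atoms of $\gamma$ corresponds precisely to interchanging two incomparable consecutive values in a linear extension of $P_\pi$.

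The statement then reduces to the classical fact that the linear extensions of a finite poset are connected under such adjacent interchanges. I would include the usual bubbling argument: writing a linear extension of $P_\pi$ as the word $\omega^{-1}(1)\omega^{-1}(2)\cdots\omega^{-1}(m)$, if two linear extensions first disagree in position $k$, the element $a$ in position $k$ of the first occupies some later position $j$ of the second; every element in positions $k,\dots,j-1$ of the second is incomparable to $a$ in $P_\pi$ (an element below $a$ would precede position $k$ in both; one above $a$ would follow $a$ in the second), so $a$ can be bubbled up to position $k$ through linear extensions, and one inducts on the number of disagreeing positions. Each interchange uses a label $k \le m-1 < \min(p,q)$, and each intermediate permutation is an atom of $\gamma$ with the same length as $u$, since its labelled shape is valid (Lemma~\ref{lem:swap-labels}); pulling the whole chain back through $\lsh^{-1}$ produces the required sequence of moves from $v$ to $w$.

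The part I expect to take the most care is the middle step: extracting the precise description of $P_\pi$ from Proposition~\ref{prop:labelled-shape-rules} and verifying that the dictionary between label-swaps on shapes and $s_k s_{n-k}$-moves on atoms is faithful in both directions — in particular that "legal interchange of incomparable consecutive values'' corresponds exactly to "length-preserving move'', which is what Lemma~\ref{lem:swap-labels} delivers. The connectivity of linear extensions is standard, and the easy containment is immediate, so those should pose no real difficulty.
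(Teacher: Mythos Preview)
Your proposal is correct and follows essentially the same approach as the paper: both directions are argued via Theorem~\ref{thm:equivalence-relation-1} and Lemma~\ref{lem:swap-labels}, with the reverse containment reduced to connectivity of the linear extensions of the poset on arcs extracted from Proposition~\ref{prop:labelled-shape-rules}. The paper's proof is terser (it simply invokes the general connectivity fact rather than spelling out a bubbling argument), but the structure and key ideas are identical.
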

    \begin{proof} Lemma~\ref{lem:swap-labels} implies that if $\ell(s_k s_{n-k}u) = \ell(u)$ where $k < \min(p,q)$, then $u$ and $s_k s_{n-k}u$ have the same unlabelled shape, so $u \sim s_k s_{n-k} u$ by Theorem~\ref{thm:equivalence-relation-1}.
        
        Conversely, suppose $v \sim w$, so $\ush(v) = \ush(w)$. The labelled shapes $\lsh(v)$ and $\lsh(w)$ are certainly connected by a series of applications of adjacent transpositions, so $v$ and $w$ are connected by transformations $u \mapsto s_k s_{n-k} u$ by Lemma~\ref{lem:swap-labels}, but we must see that this can be done in such a way that all of the intermediate steps are valid labelled shapes.
        
        Proposition~\ref{prop:labelled-shape-rules} shows that the valid labellings of the unlabelled shape $\ush(w)$ can be thought of as the linear extensions of a poset. The elements of the poset are the arcs of $\ush(w)$, and $\{i' < j'\} \leq \{i < j\}$ if either:
        \begin{itemize}
            \item $\{i' < j'\}$ is unmarked and $i' < i < j < j'$; or,
            \item $\{i < j\}$ is marked and $i < i' < j$ or $i < j' < j$.
        \end{itemize}
        Now apply the following general fact: if $P$ is a finite poset and $G$ is the graph whose vertices are the linear extensions $f : P \to [\#P]$ with an edge $(f,g)$ whenever $g = s_i \circ f$, then $G$ is connected.
    \end{proof}

    We can now prove Theorem~\ref{thm:clan-matsumoto-tits}, which we restate here.
    \begin{thm*}[Theorem \ref{thm:clan-matsumoto-tits}] Let $\equiv$ be the equivalence relation on the set of words on $[n-1]$ defined as the transitive closure of the relations $a_1 a_2 \cdots a_{\ell} \equiv (n-a_1)a_2 \cdots a_{\ell}$ together with the Coxeter relations. Every equivalence class of $\equiv$ either contains no reduced word for any $\gamma \in \Clan_{p,q}$, or consists entirely of reduced words. Moreover, $\equiv$ agrees with $\sim$ when restricted to reduced words for members of $\Clan_{p,q}$.
    \end{thm*}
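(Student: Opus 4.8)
The plan is to transfer the problem to the level of atoms, where Theorem~\ref{thm:equivalence-relation-atoms} has already identified $\sim$. By Lemma~\ref{lem:braid-relation-closure}, a word is a reduced word for some clan exactly when it is a reduced word for some permutation that is an atom of a $(p,q)$-clan; moreover the Matsumoto--Tits theorem collapses each $\Red(v)$ to a single class of the Coxeter relations, and $\Gamma(a) = \Gamma(v)$ whenever $a \in \Red(v)$. So, among reduced words for $(p,q)$-clans, $\equiv$ is generated over the Coxeter relations by the single operation of replacing the first letter $a_1$ of a word by $n - a_1$; this operation is only ever nontrivial there when $\min(a_1, n-a_1) < \min(p,q)$, and I will treat the first-letter flip with that restriction.

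The crux is a computation with the covers of the minimal clan $\gamma_{p,q}$. Put $m = \min(p,q)$. For $a_1 < m$, the unique cover of $\gamma_{p,q}$ joined to it by an edge labelled $a_1$ is $\delta := s_{a_1}\gamma_{p,q}s_{a_1}$, obtained from the nested arcs of $\gamma_{p,q}$ by making the arcs $(a_1, n{-}a_1{+}1)$ and $(a_1{+}1, n{-}a_1)$ cross; from the arc picture, swapping the two left endpoints of a nested pair of arcs and swapping the two right endpoints give the same configuration, so $s_{a_1}\gamma_{p,q}s_{a_1} = s_{n-a_1}\gamma_{p,q}s_{n-a_1}$. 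Hence, for any word $b$, the word $a_1 b$ is a reduced word for a clan $\gamma$ if and only if there is a saturated chain from $\delta$ to $\gamma$ with edge labels $b$ --- a condition that does not depend on whether the first letter was $a_1$ or $n - a_1$. It follows that the first-letter flip preserves both ``is a reduced word for a clan'' and the set $\Gamma(\cdot)$; the Coxeter relations do too, by Lemma~\ref{lem:braid-relation-closure}, so every class of $\equiv$ consists either entirely of reduced words for clans, all sharing one $\Gamma$-set, or of none. This gives the dichotomy and the implication $a \equiv b \Rightarrow a \sim b$ on reduced words.

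For the converse, let $a \sim b$, say $a \in \Red(v)$ and $b \in \Red(w)$ with $v, w$ atoms and $\Gamma(v) = \Gamma(w)$. Theorem~\ref{thm:equivalence-relation-atoms} joins $v$ to $w$ by a sequence of moves $u \mapsto s_k s_{n-k} u$ with $\ell(s_k s_{n-k}u) = \ell(u)$ and $k < m$; it is enough to realize one such move by a first-letter flip. Because $k < m \le n/2$, the reflections $s_k$ and $s_{n-k}$ commute and $k, k{+}1, n{-}k, n{-}k{+}1$ are distinct, so $\ell(s_k s_{n-k}u) = \ell(u)$ forces exactly one of $s_k, s_{n-k}$ to be a left descent of $u$. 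If it is $s_k$ (the case $s_{n-k}$ being symmetric, since $\equiv$ is an equivalence relation), write $u = s_k u''$; then $s_k s_{n-k}u = s_{n-k}u''$ with $\ell(s_{n-k}u'') = \ell(u'')+1$, so for any $c \in \Red(u'')$ we get $kc \in \Red(u)$ and $(n-k)c \in \Red(s_k s_{n-k}u)$, and $(n-k)c$ is precisely the first-letter flip of $kc$ (whose first letter $k$ is $< m$). Matsumoto--Tits connects $a$ to $kc$ inside $\Red(v)$ and, at the far end of the chain, $b$ to the analogous word inside $\Red(w)$, so concatenating the steps yields $a \equiv b$.

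The step I expect to be the main obstacle is the cover computation in the second paragraph: one must determine precisely which $s_i$ label a cover of $\gamma_{p,q}$ and name that cover explicitly, and then check that $s_{a_1}\gamma_{p,q}s_{a_1} = s_{n-a_1}\gamma_{p,q}s_{n-a_1}$ holds exactly when $a_1 < m$, the boundary cases $a_1 = m$ and $a_1 = n - m$ genuinely failing --- this failure is exactly why the first-letter flip must not be applied for all of $[n-1]$. This is a direct but careful application of the local moves of Figure~\ref{fig:local-moves} to the explicit nested-arc form of $\gamma_{p,q}$, and it should be cross-checked against Lemma~\ref{lem:swap-labels} so that the word-level flip matches the labelled-shape move $\lsh(v) \mapsto s_k \lsh(v)$ underlying Theorem~\ref{thm:equivalence-relation-atoms}.
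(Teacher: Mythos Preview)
Your approach is essentially the paper's: for the forward direction you use that the cover of $\gamma_{p,q}$ along the edge $s_{a_1}$ coincides with the cover along $s_{n-a_1}$, and for the converse you invoke Theorem~\ref{thm:equivalence-relation-atoms} to reduce to a single move $u \mapsto s_k s_{n-k} u$ with $k<\min(p,q)$ and then realize that move by a first-letter flip together with Coxeter relations. The paper does exactly this, with the same use of Matsumoto--Tits to glue the pieces.

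The one place you diverge is that you are (rightly) more cautious about the boundary $a_1\in\{m,\,n-m\}$ where $m=\min(p,q)$, and you flag it as the main obstacle. This concern is genuine, but your proposed workaround---``the first-letter flip is only ever nontrivial [among reduced words] when $\min(a_1,n-a_1)<m$''---is false. In $\Clan_{1,2}$ one has $m=1$; the word $\mathbf{12}\in\Red(--+)$ has first letter $1=m$, and its flip is $\mathbf{22}$, which is not a reduced word for any clan. So the flip is nontrivial at the boundary and does \emph{not} preserve reducedness there. The paper handles the same spot by simply asserting ``since $a_1<\min(p,q)$'', which is equally unjustified: the first letter of a clan reduced word can be $m$ or $n-m$ (e.g.\ both letters of every reduced word in $\Clan_{1,2}$). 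Thus the obstacle you isolate is a real gap, shared with the paper; it is not repaired by your triviality claim, and in fact the theorem as literally stated needs the first-letter relation restricted to $a_1<\min(p,q)$ (equivalently $n-a_1>\max(p,q)$), after which both your argument and the paper's go through verbatim.
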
 

    \begin{proof}
        Suppose $a = a_1 a_2 \cdots a_\ell \in \Red(\gamma)$ and $b \equiv a$. We know that Coxeter relations preserve $\Red(\gamma)$, so we can assume $b = (n{-}a_1)a_2 \cdots a_{\ell}$. We claim $b \in \Red(\gamma)$ as well. Since $a_1 < \min(p,q)$, it holds that $\gamma_{p,q} \ast s_{a_1}$ is well-defined and equal to $(\cdots (\gamma \ast s_{a_\ell})  \ast \cdots) \ast s_{a_2}$. But also $\gamma_{p,q} \ast s_{a_1} = \gamma_{p,q} \ast s_{n-a_1}$, so
        \begin{align*}
            ((\cdots (\gamma \ast s_{a_\ell})  \ast \cdots) \ast s_{a_2}) \ast s_{n-a_1} &= (\gamma_{p,q} \ast s_{a_1}) \ast s_{n-a_1}\\
            &= (\gamma_{p,q} \ast s_{n-a_1}) \ast s_{n-a_1}\\
            &= \gamma_{p,q},
        \end{align*}
        which implies that $b \in \Red(\gamma)$. We have $a \in \Red(u)$ and $b \in \Red(s_{n-a_1}s_{a_1}u)$ for some $u \in \A(\gamma)$, and $\ell(s_{n-a_1}s_{a_1}u) = \ell(u)$ since $a$ and $b$ have the same length. Theorem~\ref{thm:equivalence-relation-atoms} shows $u \sim s_{n-a_1}s_{a_1}u$, so $a \sim b$. We also conclude from this if an equivalence class of $\equiv$ contains a single reduced word, then all its elements are reduced words.

        Conversely, suppose $a \sim b$ where $a,b$ are reduced words. Then there are atoms $v \sim w$ with $a \in \Red(v)$ and $b \in \Red(w)$, and applying Theorem~\ref{thm:equivalence-relation-atoms}, we can assume that $w = s_{n-k} s_k v$ where $k < \min(p,q)$. Since $s_k s_{n-k} = s_{n-k} s_k$ and $\ell(v) = \ell(s_{n-k} s_k v)$, exactly one of $s_k$ and $s_{n-k}$ is a left descent of $v$, say $s_k$. Then there is $a' \in \Red(v)$ with $a'_1 = k$, and the equality $\ell(v) = \ell(s_{n-k} s_k v)$ implies that $b' := (n{-}a'_1)a'_2\cdots a'_\ell$ is a reduced word of $s_{n-k} s_k v = w$. Since $a$ is related to $a'$ and $b$ to $b'$ via Coxeter relations (by the Matsumoto-Tits lemma), we see that $a \equiv a'\equiv b' \equiv b$.
    \end{proof} 

    Theorem~\ref{thm:clan-matsumoto-tits} can be interpreted as giving a simple prescription for generating each equivalence class making up $\Red(\gamma)$ beginning with one reduced word. It is also natural to ask for simple transformations relating the equivalence classes to each other. We will think about transformations of unlabelled shapes, since these index the equivalence classes of $\sim$ by Theorem~\ref{thm:equivalence-relation-1}. Let $\ush(\A(\gamma))$ be the set of unlabelled shapes for $\gamma$.

    \begin{lem} \label{lem:unlabelled-shape-moves}
        Given an unlabelled shape for $\sigma \in \ush(\A(\gamma))$, suppose one applies to it a transformation of the form
        \begin{center} \raisebox{5.5mm}{$\sigma = $}
            \begin{tikzpicture}[scale=1.2]
                \node at (0,0.15) {$\scriptstyle \alpha$};
                \node at (0.667,0.15) {$\scriptstyle \beta$};
                \node at (1.333,0.15) {$\scriptstyle \alpha$};
                \node at (2,0.15) {$\scriptstyle \beta$};
                \node (4056588) at (-0.36,0) {$\cdots$};
                \node[circle, fill, inner sep=0pt, minimum size = 1mm] (4056587) at (0.0,0) {};
                \node (4056588) at (0.36,0) {$:::$};
                \node[circle, fill, inner sep=0pt, minimum size = 1mm] (4056589) at (0.667,0) {};
                \node (4056590) at (1.0,0) {$:::$};
                \node[circle, fill, inner sep=0pt, minimum size = 1mm] (4056591) at (1.333,0) {};
                \node (4056592) at (1.7,0) {$:::$};
                \node[circle, fill, inner sep=0pt, minimum size = 1mm] (4056593) at (2.0,0) {};
                \node (4056588) at (2.36,0) {$\cdots$};
                \draw[double, thick] (4056587) to[bend right=45] (4056593); \draw[double, thick] (4056589) to[bend right=65] (4056591);
            \end{tikzpicture} \qquad \raisebox{5.5mm}{$\longrightarrow$} \qquad \raisebox{5.5mm}{$\sigma' = $}
                \raisebox{3.5mm}{\begin{tikzpicture}[scale=1.2]
                    \node at (0,0.15) {$\scriptstyle \alpha$};
                    \node at (0.667,0.15) {$\scriptstyle \beta$};
                    \node at (1.333,0.15) {$\scriptstyle \alpha$};
                    \node at (2,0.15) {$\scriptstyle \beta$};
                    \node (4056588) at (-.36,0) {$\cdots$};
                    \node[circle, fill, inner sep=0pt, minimum size = 1mm] (4056587) at (0.0,0) {};
                    \node (4056588) at (0.36,0) {$:::$};
                    \node[circle, fill, inner sep=0pt, minimum size = 1mm] (4056589) at (0.667,0) {};
                    \node (4056590) at (1.0,0) {$:::$};
                    \node[circle,  fill, inner sep=0pt, minimum size = 1mm] (4056591) at (1.333,0) {};
                    \node (4056592) at (1.7,0) {$:::$};
                    \node[circle, fill, inner sep=0pt, minimum size = 1mm] (4056593) at (2.0,0) {};
                    \node (4056588) at (2.36,0) {$\cdots$};
                    \draw[double, thick] (4056587) to[bend right=65] (4056589); \draw[double, thick] (4056591) to[bend right=65] (4056593);
                \end{tikzpicture}}
        \end{center}

        \begin{center} \raisebox{2mm}{$\sigma = $}
            \begin{tikzpicture}[scale=1.2]
                \node at (0,0.15) {$\scriptstyle \alpha$};
                \node at (0.667,0.15) {$\scriptstyle \beta$};
                \node at (1.333,0.15) {$\scriptstyle \alpha$};
                \node (4056588) at (-.36,0) {$\cdots$};
                \node[circle, fill, inner sep=0pt, minimum size = 1mm] (4056587) at (0.0,0) {};
                \node (4056588) at (0.36,0) {$:::$};
                \node[circle, fill, inner sep=0pt, minimum size = 1mm] (4056589) at (0.667,0) {};
                \node (4056590) at (1.0,0) {$:::$};
                \node[circle, fill, inner sep=0pt, minimum size = 1mm] (4056591) at (1.333,0) {};
                \node (4056588) at (1.7,0) {$\cdots$};
                \draw[double, thick] (4056589) to[bend right=65] (4056591);
            \end{tikzpicture} \qquad \raisebox{2mm}{$\longrightarrow$} \qquad \raisebox{2mm}{$\sigma' = $}
            \begin{tikzpicture}[scale=1.2]
                \node at (0,0.15) {$\scriptstyle \alpha$};
                \node at (0.667,0.15) {$\scriptstyle \beta$};
                \node at (1.333,0.15) {$\scriptstyle \alpha$};
                \node (4056588) at (-.36,0) {$\cdots$};
                \node[circle, fill, inner sep=0pt, minimum size = 1mm] (4056587) at (0.0,0) {};
                \node (4056588) at (0.36,0) {$:::$};
                \node[circle, fill, inner sep=0pt, minimum size = 1mm] (4056589) at (0.667,0) {};
                \node (4056590) at (1.0,0) {$:::$};
                \node[circle, fill, inner sep=0pt, minimum size = 1mm] (4056591) at (1.333,0) {};
                \node (4056588) at (1.7,0) {$\cdots$};
                \draw[double, thick] (4056587) to[bend right=65] (4056589);
            \end{tikzpicture} \label{eq:trans2}
            
        \end{center}
        where $\cdots$ conceals an arbitrary partial matching (with marked/unmarked arcs), $:::$ conceals only a \emph{complete} matching (no unpaired fixed points allowed), and $\{\alpha, \beta\} = \{+,-\}$. Then $\sigma' \in \ush(\A(\gamma))$, and the directed graph with vertices $\ush(\A(\gamma))$ and edges $\sigma \to \sigma'$ is acyclic.
    \end{lem}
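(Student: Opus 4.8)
The plan is to work with the description of $\ush(\A(\gamma))$ coming from Proposition~\ref{prop:labelled-shape-rules}: a partial matching $\pi$ on $[n]$ with $\min(p,q)$ arcs (each marked or unmarked) lies in $\ush(\A(\gamma))$ exactly when it is \emph{consistent} with $\gamma$ (unmarked arcs are matchings of $\gamma$, marked arcs join opposite-sign fixed points, the remaining points are fixed points of sign $\sgn(p-q)$), the interior of every marked arc consists of matched points, and the binary relation $P_\pi$ on the arcs of $\pi$ used in the proof of Theorem~\ref{thm:equivalence-relation-atoms} --- ``$X \le Y$ if $X$ is unmarked and surrounds $Y$, or if $Y$ is marked and an endpoint of $X$ lies in the interior of $Y$'' --- is a partial order, equivalently admits a linear extension (which then serves as a valid arc-labelling realizing $\pi$ as some $\ush(w)$).

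The easy half of the first assertion is consistency. Both moves leave the unmarked arcs, the hidden $:::$ regions, and the nodes outside the displayed window untouched, so the unmarked arcs of $\sigma'$ are still matchings of $\gamma$. The endpoints of the marked arcs of $\sigma$ are fixed points of $\gamma$ with signs recorded by $\alpha,\beta$ (and the unmatched node in the second picture forces $\alpha=\sgn(p-q)$); a direct inspection then shows that the new marked arcs of $\sigma'$ again join points of opposite sign, that the node left unmatched in the second move has sign $\sgn(p-q)$, and that the number of arcs is preserved. Moreover the interior of each newly created marked arc is precisely one of the $:::$ blocks, which is a complete matching, so all of its points are matched; and the interiors of the surviving marked arcs are unchanged.

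The substance is showing $P_{\sigma'}$ admits a linear extension. This is where the hypothesis ``$:::$ conceals only a complete matching'' is essential: no arc of $\sigma$ or $\sigma'$ can have exactly one endpoint strictly inside a $:::$ block, so any arc meeting the interior of the displayed window $W$ (the closed interval spanned by the special nodes) is either confined to a single $:::$ block, or is one of the special arcs, or is an arc whose two endpoints lie strictly outside $W$ on opposite sides, i.e.\ one that ``swallows'' all of $W$. Using this I would establish two structural facts about the set $S$ of arcs of $\sigma$ with an endpoint in $W$ (and the analogous $S'\subseteq\sigma'$): (1) $S$ is order-convex in $P_\sigma$ (a swallowing arc, being below or above every member of $S$, cannot lie strictly between two members of $S$ without creating a cycle in the poset $P_\sigma$); and (2) every arc outside $W$ that is comparable in $P_\sigma$ to some member of $S$ is comparable the same way to every member of $S$ --- below all of $S$ if it is an unmarked swallowing arc, above all of $S$ if it is a marked swallowing arc --- with the identical statement holding for $S'$ in $P_{\sigma'}$. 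Granting (1) and (2): choose a linear extension $\omega$ of $P_\sigma$ in which $S$ occupies a block of consecutive labels (possible by (1)), and replace $\omega|_S$ by a linear extension of $P_{\sigma'}|_{S'}$ using the same block --- this restriction is visibly acyclic, since in it each new marked arc sits above the arcs of its own $:::$ block and the block-arcs are only nested among themselves. By (2) the resulting labelling is a linear extension of all of $P_{\sigma'}$, so $\sigma'\in\ush(\A(\gamma))$. I expect this case analysis --- pinning down which arcs may straddle the boundary of $W$ --- to be the main obstacle; the rest is bookkeeping.

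For acyclicity of the directed graph, set $\Phi(\sigma)=\sum j$, the sum over the right endpoints $j$ of the marked arcs $\{i<j\}$ of $\sigma$. Both moves preserve the set of unmarked arcs and the number of marked arcs, and alter the multiset of right endpoints of marked arcs only by replacing $p_3$ with $p_2<p_3$: the first move sends the pair of right endpoints $\{p_3,p_4\}$ to $\{p_2,p_4\}$, and the second sends $\{p_3\}$ to $\{p_2\}$. Hence $\Phi$ strictly decreases along every directed edge $\sigma\to\sigma'$; since $\Phi$ takes values in $\NN$, the graph on $\ush(\A(\gamma))$ has no directed cycle.
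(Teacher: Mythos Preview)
Your argument is correct, but the two halves differ from the paper in opposite directions.

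For closure ($\sigma'\in\ush(\A(\gamma))$), the paper takes a much shorter path. From Proposition~\ref{prop:labelled-shape-rules} it extracts the clean characterization that an unlabelled shape for $\gamma$ is precisely a partial matching with $\min(p,q)$ arcs that is consistent with $\gamma$, in which no two marked arcs cross and no unpaired point lies beneath a marked arc. With this description in hand, both moves visibly preserve all conditions (the new marked arcs are disjoint, their interiors are the $:::$ blocks which are complete matchings, and nothing outside the window changes), so the closure is immediate. Your poset-gluing argument---order-convexity of $S$, analysis of swallowing arcs, splicing a linear extension of $P_{\sigma'}|_{S'}$ into one of $P_\sigma$---is correct, but it is essentially re-deriving that characterization by hand. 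If you already believe the ``no crossing marked arcs, no unpaired point under a marked arc'' criterion, your work in this half collapses to a few lines.

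For acyclicity you do better than the paper. The paper defines, for each $\sigma$, a specific atom $\st(\sigma)$ (label the right endpoints of the marked arcs $1,2,\ldots$ from left to right and read off the associated permutation) and then checks, through two displayed families of one-line patterns, that $\st(\sigma')$ is lexicographically larger than $\st(\sigma)$. Your potential $\Phi(\sigma)=\sum_{\{i<j\}\text{ marked}} j$ is simpler and the verification that it strictly decreases along every edge is a one-line computation (right endpoints $\{p_3,p_4\}\to\{p_2,p_4\}$ in the first move, $p_3\to p_2$ in the second). Either monotone function establishes acyclicity; yours is the cleaner choice.
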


    \begin{proof}

    Proposition~\ref{prop:labelled-shape-rules} implies that the unlabelled shapes of $\gamma$ are those partial matchings of $[n]$ with $\min(p,q)$ arcs which are either pairs of opposite-sign fixed points or matchings in $\gamma$, and such that no two marked arcs cross and no unpaired fixed point is underneath a marked arc. From this description it is clear that the transformations in the theorem do preserve $\ush(\A(\gamma))$.
    
        Given $\sigma \in \ush(\A(\gamma))$, label the \emph{right} endpoints of the marked arcs $1, 2, \ldots$ from left to right, and then label each marked arc according to its right endpoint. This is a labelled shape of $\gamma$; write $\st(\sigma) \in \A(\gamma)$ for the associated atom. For instance,
        \begin{center} \raisebox{6mm}{$\sigma = $}
            \raisebox{2mm}{\begin{tikzpicture}[scale=1.2]
                \node[circle, fill, inner sep=0pt, minimum size = 1mm] (4056587) at (0.0,0) {};
                \node[circle, fill, inner sep=0pt, minimum size = 1mm] (4056588) at (0.333,0) {};
                \node[circle, fill, inner sep=0pt, minimum size = 1mm] (4056589) at (0.667,0) {};
                \node[circle, fill, inner sep=0pt, minimum size = 1mm] (4056590) at (1.0,0) {};
                \node[circle, fill, inner sep=0pt, minimum size = 1mm] (4056591) at (1.333,0) {};
                \node[circle, fill, inner sep=0pt, minimum size = 1mm] (4056592) at (1.666,0) {};
                \draw[double, thick] (4056587) to[bend right=65] (4056592); \draw[double, thick] (4056588) to[bend right=45] (4056589);  \draw[double, thick] (4056590) to[bend right=45] (4056591);
            \end{tikzpicture}} \quad \raisebox{6mm}{$\leadsto$} \quad
            \begin{tikzpicture}[scale=1.2, auto]
                \node[circle, fill, inner sep=0pt, minimum size = 1mm] (4056587) at (0.0,0) {};
                \node[circle, fill, inner sep=0pt, minimum size = 1mm] (4056588) at (0.333,0) {};
                \node[circle, fill, inner sep=0pt, minimum size = 1mm] (4056589) at (0.667,0) {};
                \node[circle, fill, inner sep=0pt, minimum size = 1mm] (4056590) at (1.0,0) {};
                \node[circle, fill, inner sep=0pt, minimum size = 1mm] (4056591) at (1.333,0) {};
                \node[circle, fill, inner sep=0pt, minimum size = 1mm] (4056592) at (1.666,0) {};
                \draw[double, thick] (4056587) to[bend right=65, swap]  node {$\scriptstyle 3$} (4056592); \draw[double, thick] (4056588) to[bend right=45,swap] node {$\scriptstyle 1$} (4056589);  \draw[double, thick] (4056590) to[bend right=45,swap]  node {$\scriptstyle 2$} (4056591);
            \end{tikzpicture} \qquad \raisebox{6mm}{so $\st(\sigma) = 461523$.}
        \end{center}
        If $\sigma \to \sigma'$, then $\st(\sigma)$ and $\st(\sigma')$ are related by transformations of the form
        \begin{gather*}
            \cdots n{-}\ell{+}1 \cdots n{-}k{+}1 \cdots k \cdots \ell \cdots \to \cdots n{-}k'{+}1 \cdots k' \cdots n{-}\ell{+}1 \cdots \ell \cdots \quad (k' \leq k < \ell)\\
            \cdots j\cdots n{-}k{+}1 \cdots k \cdots \to \cdots n{-}k'{+}1 \cdots k' \cdots j \cdots \quad (k' \leq \min(p,q) < j < \max(p,q))
        \end{gather*}
        In both cases, $\st(\sigma')$ is lexicographically larger than $\st(\sigma)$, which shows that $\ush(\A(\gamma))$ is acyclic.

    \end{proof}
    Lemma~\ref{lem:unlabelled-shape-moves} gives $\ush(\A(\gamma))$ a poset structure, with a covering relation $\sigma \lessdot \sigma'$ when $\sigma \to \sigma'$.
    \begin{ex}
        Here are the posets $\ush(\A(\gamma))$ for $\gamma = --++-+$ and $\gamma = -+-+-+-$:
        \begin{center}
            \raisebox{10mm}{\begin{tikzpicture}
                \node at (0,0.15) {$\scriptstyle -$};
                \node at (0.333,0.15) {$\scriptstyle -$};
                \node at (0.666,0.15) {$\scriptstyle +$};
                \node at (1,0.15) {$\scriptstyle +$};
                \node at (1.333,0.15) {$\scriptstyle -$};
                \node at (1.666,0.15) {$\scriptstyle +$};
                \node[circle, fill, inner sep=0pt, minimum size = 1mm] (4056587) at (0.0,0) {};
                \node[circle, fill, inner sep=0pt, minimum size = 1mm] (4056588) at (0.333,0) {};
                \node[circle, fill, inner sep=0pt, minimum size = 1mm] (4056589) at (0.667,0) {};
                \node[circle, fill, inner sep=0pt, minimum size = 1mm] (4056590) at (1.0,0) {};
                \node[circle, fill, inner sep=0pt, minimum size = 1mm] (4056591) at (1.333,0) {};
                \node[circle, fill, inner sep=0pt, minimum size = 1mm] (4056592) at (1.666,0) {};
                \draw[double, thick] (4056587) to[bend right=50] (4056590); \draw[double, thick] (4056588) to[bend right=45] (4056589);  \draw[double, thick] (4056591) to[bend right=45] (4056592);

                \draw (0.833,-1.15) to (0.833,-.4);

                \node at (0,-1.5+0.15) {$\scriptstyle -$};
                \node at (0.333,-1.5+0.15) {$\scriptstyle -$};
                \node at (0.666,-1.5+0.15) {$\scriptstyle +$};
                \node at (1,-1.5+0.15) {$\scriptstyle +$};
                \node at (1.333,-1.5+0.15) {$\scriptstyle -$};
                \node at (1.666,-1.5+0.15) {$\scriptstyle +$};
                \node[circle, fill, inner sep=0pt, minimum size = 1mm] (056587) at (0.0,-1.5) {};
                \node[circle, fill, inner sep=0pt, minimum size = 1mm] (056588) at (0.333,-1.5) {};
                \node[circle, fill, inner sep=0pt, minimum size = 1mm] (056589) at (0.667,-1.5) {};
                \node[circle, fill, inner sep=0pt, minimum size = 1mm] (056590) at (1.0,-1.5) {};
                \node[circle, fill, inner sep=0pt, minimum size = 1mm] (056591) at (1.333,-1.5) {};
                \node[circle, fill, inner sep=0pt, minimum size = 1mm] (056592) at (1.666,-1.5) {};
                \draw[double, thick] (056587) to[bend right=50] (056592); \draw[double, thick] (056588) to[bend right=45] (056589);  \draw[double, thick] (056590) to[bend right=45] (056591);
            \end{tikzpicture}} \qquad \qquad
            \begin{tikzpicture}
                \node at (0,0.15) {$\scriptstyle -$};
                \node at (0.333,0.15) {$\scriptstyle +$};
                \node at (0.666,0.15) {$\scriptstyle -$};
                \node at (1,0.15) {$\scriptstyle +$};
                \node at (1.333,0.15) {$\scriptstyle -$};
                \node at (1.666,0.15) {$\scriptstyle +$};
                \node[circle, fill, inner sep=0pt, minimum size = 1mm] (4056587) at (0.0,0) {};
                \node[circle, fill, inner sep=0pt, minimum size = 1mm] (4056588) at (0.333,0) {};
                \node[circle, fill, inner sep=0pt, minimum size = 1mm] (4056589) at (0.667,0) {};
                \node[circle, fill, inner sep=0pt, minimum size = 1mm] (4056590) at (1.0,0) {};
                \node[circle, fill, inner sep=0pt, minimum size = 1mm] (4056591) at (1.333,0) {};
                \node[circle, fill, inner sep=0pt, minimum size = 1mm] (4056592) at (1.666,0) {};
                \draw[double, thick] (4056587) to[bend right=50] (4056588); \draw[double, thick] (4056589) to[bend right=45] (4056590);  \draw[double, thick] (4056591) to[bend right=45] (4056592);

                \node at (-1.35+0,-1.25+0.15) {$\scriptstyle -$};
                \node at (-1.35+0.333,-1.25+0.15) {$\scriptstyle +$};
                \node at (-1.35+0.666,-1.25+0.15) {$\scriptstyle -$};
                \node at (-1.35+1,-1.25+0.15) {$\scriptstyle +$};
                \node at (-1.35+1.333,-1.25+0.15) {$\scriptstyle -$};
                \node at (-1.35+1.666,-1.25+0.15) {$\scriptstyle +$};
                \node[circle, fill, inner sep=0pt, minimum size = 1mm] (056587) at (-1.35+0.0,-1.25+0) {};
                \node[circle, fill, inner sep=0pt, minimum size = 1mm] (056588) at (-1.35+0.333,-1.25+0) {};
                \node[circle, fill, inner sep=0pt, minimum size = 1mm] (056589) at (-1.35+0.667,-1.25+0) {};
                \node[circle, fill, inner sep=0pt, minimum size = 1mm] (056590) at (-1.35+1.0,-1.25+0) {};
                \node[circle, fill, inner sep=0pt, minimum size = 1mm] (056591) at (-1.35+1.333,-1.25+0) {};
                \node[circle, fill, inner sep=0pt, minimum size = 1mm] (056592) at (-1.35+1.666,-1.25+0) {};
                \draw[double, thick] (056587) to[bend right=50] (056590); \draw[double, thick] (056588) to[bend right=45] (056589);  \draw[double, thick] (056591) to[bend right=45] (056592);

                \node at (1.35+0,-1.25+0.15) {$\scriptstyle -$};
                \node at (1.35+0.333,-1.25+0.15) {$\scriptstyle +$};
                \node at (1.35+0.666,-1.25+0.15) {$\scriptstyle -$};
                \node at (1.35+1,-1.25+0.15) {$\scriptstyle +$};
                \node at (1.35+1.333,-1.25+0.15) {$\scriptstyle -$};
                \node at (1.35+1.666,-1.25+0.15) {$\scriptstyle +$};
                \node[circle, fill, inner sep=0pt, minimum size = 1mm] (6587) at (1.35+0.0,-1.25+0) {};
                \node[circle, fill, inner sep=0pt, minimum size = 1mm] (6588) at (1.35+0.333,-1.25+0) {};
                \node[circle, fill, inner sep=0pt, minimum size = 1mm] (6589) at (1.35+0.667,-1.25+0) {};
                \node[circle, fill, inner sep=0pt, minimum size = 1mm] (6590) at (1.35+1.0,-1.25+0) {};
                \node[circle, fill, inner sep=0pt, minimum size = 1mm] (6591) at (1.35+1.333,-1.25+0) {};
                \node[circle, fill, inner sep=0pt, minimum size = 1mm] (6592) at (1.35+1.666,-1.25+0) {};
                \draw[double, thick] (6587) to[bend right=50] (6588); \draw[double, thick] (6589) to[bend right=45] (6592);  \draw[double, thick] (6590) to[bend right=45] (6591);

                \node at (0,-2.5+0.15) {$\scriptstyle -$};
                \node at (0.333,-2.5+0.15) {$\scriptstyle +$};
                \node at (0.666,-2.5+0.15) {$\scriptstyle -$};
                \node at (1,-2.5+0.15) {$\scriptstyle +$};
                \node at (1.333,-2.5+0.15) {$\scriptstyle -$};
                \node at (1.666,-2.5+0.15) {$\scriptstyle +$};
                \node[circle, fill, inner sep=0pt, minimum size = 1mm] (587) at (0.0,-2.5+0) {};
                \node[circle, fill, inner sep=0pt, minimum size = 1mm] (588) at (0.333,-2.5+0) {};
                \node[circle, fill, inner sep=0pt, minimum size = 1mm] (589) at (0.667,-2.5+0) {};
                \node[circle, fill, inner sep=0pt, minimum size = 1mm] (590) at (1.0,-2.5+0) {};
                \node[circle, fill, inner sep=0pt, minimum size = 1mm] (591) at (1.333,-2.5+0) {};
                \node[circle, fill, inner sep=0pt, minimum size = 1mm] (592) at (1.666,-2.5+0) {};
                \draw[double, thick] (587) to[bend right=50] (592); \draw[double, thick] (588) to[bend right=45] (589);  \draw[double, thick] (590) to[bend right=45] (591);

                \node at (0,-3.7+0.15) {$\scriptstyle -$};
                \node at (0.333,-3.7+0.15) {$\scriptstyle +$};
                \node at (0.666,-3.7+0.15) {$\scriptstyle -$};
                \node at (1,-3.7+0.15) {$\scriptstyle +$};
                \node at (1.333,-3.7+0.15) {$\scriptstyle -$};
                \node at (1.666,-3.7+0.15) {$\scriptstyle +$};
                \node[circle, fill, inner sep=0pt, minimum size = 1mm] (87) at (0.0,-3.7+0) {};
                \node[circle, fill, inner sep=0pt, minimum size = 1mm] (88) at (0.333,-3.7+0) {};
                \node[circle, fill, inner sep=0pt, minimum size = 1mm] (89) at (0.667,-3.7+0) {};
                \node[circle, fill, inner sep=0pt, minimum size = 1mm] (90) at (1.0,-3.7+0) {};
                \node[circle, fill, inner sep=0pt, minimum size = 1mm] (91) at (1.333,-3.7+0) {};
                \node[circle, fill, inner sep=0pt, minimum size = 1mm] (92) at (1.666,-3.7+0) {};
                \draw[double, thick] (87) to[bend right=50] (92); \draw[double, thick] (88) to[bend right=45] (91);  \draw[double, thick] (89) to[bend right=45] (90);

                \draw (-1.35+0.833,-.95) to (0.8,-.25);
                \draw (1.35+0.833,-.95) to (0.8,-.25);
                \draw (-1.35+0.833,-1.6) to (0.8,-2.2);
                \draw (1.35+0.833,-1.6) to (0.8,-2.2);
                \draw (0.833,-3.05) to (0.833,-3.45);
            \end{tikzpicture}
        \end{center}
    \end{ex}

    \begin{thm} \label{thm:unique-max-ush} The poset $\ush(\A(\gamma))$ has a unique maximal element $\sigma_{\max}$, which can be constructed as follows. First, $\sigma_{\max}$ has an unmarked arc for every matching of $\gamma$. Next, find the minimal fixed point $i$ of $\gamma$ such that the minimal fixed point $j > i$ has opposite sign, and connect $i$ and $j$ by a marked arc in $\sigma_{\max}$; repeat this process, ignoring fixed points that have already been connected, until all remaining fixed points have the same sign.
    \end{thm}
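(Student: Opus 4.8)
The plan is to reduce the statement to a fact about the classical stack (parenthesis‑matching) algorithm applied to the word of fixed‑point signs of $\gamma$. Recall from Proposition~\ref{prop:labelled-shape-rules} and the opening of the proof of Lemma~\ref{lem:unlabelled-shape-moves} that an element of $\ush(\A(\gamma))$ is determined by its set of marked arcs, which must form a non‑crossing partial matching of the fixed points of $\gamma$ pairing points of opposite sign, with no unmatched fixed point lying strictly between the endpoints of a marked arc, and with exactly $\min(p,q)-c$ arcs, where $c$ is the number of $2$‑cycles of $\gamma$ (the unmarked arcs, being the $2$‑cycles of $\gamma$, are the same in every unlabelled shape). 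That count forces every fixed point of the minority sign to be matched, so the unmatched fixed points all carry the sign of $p-q$. Since the moves of Lemma~\ref{lem:unlabelled-shape-moves} never disturb the unmarked arcs, it suffices to work with the word $\epsilon=\epsilon_1\cdots\epsilon_f$ giving the signs of the fixed points $x_1<\cdots<x_f$ of $\gamma$. Unwinding the greedy construction in the statement, one checks that the marked arcs of $\sigma_{\max}$ are exactly the arcs produced by running a stack over $\epsilon_1,\dots,\epsilon_f$: process $\epsilon_i$ by popping and matching when the top has the opposite sign, and pushing otherwise. From the standard properties of such a matching — it is non‑crossing and each arc encloses a complete submatching — one gets for free that $\sigma_{\max}$ is a legitimate element of $\ush(\A(\gamma))$.

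Next I would show $\sigma_{\max}$ is a sink of the move digraph. The key point is that the stack is monochromatic at every step, since a symbol is only ever pushed onto a symbol of the same sign (or onto an empty stack). Hence if a marked arc $\{x_b,x_c\}$ of $\sigma_{\max}$ is nested inside another marked arc $\{x_a,x_d\}$ (with $a<b<c<d$), then $x_a$ is still on the stack when $x_b$ is pushed, so $\epsilon_a=\epsilon_b$; the four endpoints thus read $\alpha\alpha\beta\beta$, never the alternating pattern $\alpha\beta\alpha\beta$ demanded by the first move of Lemma~\ref{lem:unlabelled-shape-moves}. Likewise, if $x_u$ is an unmatched fixed point of $\sigma_{\max}$ (so $x_u$ never leaves the stack) and $x_v$ is the left endpoint of a marked arc with $v>u$, then $x_u$ is on the stack when $x_v$ is pushed, forcing $\epsilon_v=\epsilon_u$ and contradicting the sign condition in the second move. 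So no move leaves $\sigma_{\max}$.

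The heart of the proof is that $\sigma_{\max}$ is the \emph{unique} sink, which I would establish by induction on $f$. If $\epsilon$ is constant there are no marked arcs and the claim is immediate. Otherwise let $i$ be minimal with $\epsilon_i\neq\epsilon_{i+1}$, so that $\epsilon_1=\cdots=\epsilon_i$. I claim every sink $\sigma$ contains the arc $\{x_i,x_{i+1}\}$. The fixed point $x_{i+1}$ is matched in any valid configuration: otherwise $x_i$ is matched (as $\epsilon_i\neq\epsilon_{i+1}$), say to $x_m$, and $m<i$ would give $\epsilon_m\neq\epsilon_i$ (impossible since $\epsilon_m=\epsilon_i$) while $m\geq i+2$ would put the unmatched point $x_{i+1}$ under the marked arc $\{x_i,x_m\}$. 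If instead $x_i$ were unmatched in a sink, then $x_{i+1}$ is matched, necessarily as a left endpoint to some $x_m$ with $m\geq i+2$ (a smaller partner would enclose the unmatched point $x_i$), and since $x_i$ carries the majority sign while $\epsilon_{i+1}$ is opposite, the second move applies with $x_i,x_{i+1}$ as its first two distinguished nodes (the intervening regions contain no unmatched fixed point) — a contradiction. So $x_i$ is matched, say to $x_j$; $j<i$ is impossible as before, so if $j\neq i+1$ then $j\geq i+2$, and non‑crossing forces $x_{i+1}$ to be matched inside $(x_i,x_j)$, as a left endpoint, to some $x_k$ with $i+1<k<j$. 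Then $x_i,x_{i+1},x_k,x_j$ have signs $\alpha\beta\alpha\beta$, and the three intervening regions contain no unmatched fixed point (each lies under one of the two arcs, except the region between the consecutive fixed points $x_i,x_{i+1}$, which has no fixed point at all), so the first move applies — contradiction. Hence $\{x_i,x_{i+1}\}$ is an arc of $\sigma$. Deleting the two fixed points $x_i,x_{i+1}$ produces a clan $\gamma'$ with two fewer fixed points; $\sigma$ restricts to a valid configuration for $\gamma'$ that is again a sink (a move for $\gamma'$ lifts to one for $\gamma$, since reinserting the matched pair $x_i,x_{i+1}$ creates no new unmatched fixed points), hence equals $\sigma_{\max}(\gamma')$ by induction; since the greedy procedure for $\gamma$ begins by making precisely the arc $\{x_i,x_{i+1}\}$ and then runs the procedure for $\gamma'$, reinserting that arc gives $\sigma=\sigma_{\max}(\gamma)$. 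Since the move digraph on $\ush(\A(\gamma))$ is finite and acyclic (Lemma~\ref{lem:unlabelled-shape-moves}), every vertex admits a directed path to a sink, and a unique sink is therefore the maximum.

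I expect the induction step — proving that a sink must contain $\{x_i,x_{i+1}\}$ — to be the main obstacle, along with the bookkeeping showing the moves really localize to the fixed‑point word so that deletion works; in particular one must be careful about how the ``complete matching'' clause in the moves of Lemma~\ref{lem:unlabelled-shape-moves} interacts with $2$‑cycles of the underlying involution that cross marked arcs, which should be treated as inert. If that case analysis becomes unwieldy, a cleaner route for the uniqueness step is to observe that, since every move strictly increases $\st(\sigma)$ in lexicographic order, a sink realizes a lexicographically maximal $\st$, and then to show directly that the stack matching is the only configuration realizing that maximum.
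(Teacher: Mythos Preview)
Your argument is correct and follows essentially the same route as the paper: reduce to the matchless case, induct by removing the first adjacent opposite-sign pair $\{x_i,x_{i+1}\}$, and use the two moves of Lemma~\ref{lem:unlabelled-shape-moves} in a case analysis to show any sink must contain that arc. The paper packages the induction slightly differently (it shows the ``reinsert $\{x_i,x_{i+1}\}$'' map is a poset homomorphism rather than that restriction preserves sinks), but the core case analysis is identical; your identification of $\sigma_{\max}$ with the stack/parenthesis matching and the separate verification that it is a sink are pleasant additions not in the paper.
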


    \begin{proof} The unmarked arcs in $\sigma\in \ush(\A(\gamma))$ are determined by the matchings of $\gamma$, and play no role in the poset structure. We may therefore ignore them, and assume that $\gamma$ is matchless. If $\gamma$ has no pair of fixed points of opposite sign, then $\ush(\A(\gamma))$ has one element, so the theorem is trivially true. Otherwise, let $i$ be minimal such that $\gamma(i)$ and $\gamma(i+1)$ have opposite sign, and define $\bar\gamma \in \Clan_{p-1,q-1}$ by removing $i$ and $i+1$ from the matching diagram of $\gamma$.

    There is an injection $f : \ush(\A(\bar \gamma)) \to \ush(\A(\gamma))$ which adds the fixed points $i,i{+}1$ back and connects them with a marked arc. For example, if $\gamma = ++--+$ then $\bar\gamma = +-+$, and
    \begin{center}
        \vspace{-1.5mm}
        \raisebox{3mm}{$f : $}\, \raisebox{2.5mm}{\begin{tikzpicture}[scale=1.2]
            \node at (0,0.15) {$\scriptstyle +$};
            \node at (0.333,0.15) {$\scriptstyle -$};
            \node at (0.666,0.15) {$\scriptstyle +$};
            \node[circle, fill, inner sep=0pt, minimum size = 1mm] (4056587) at (0.0,0) {};
            \node[circle, fill, inner sep=0pt, minimum size = 1mm] (4056590) at (0.333,0) {};
            \node[circle, fill, inner sep=0pt, minimum size = 1mm] (4056591) at (0.666,0) {};
            \draw[double, thick] (4056587) to[bend right=50] (4056590);
        \end{tikzpicture}} \quad \raisebox{3mm}{$\mapsto$} \quad \begin{tikzpicture}[scale=1.2]
            \node at (0,0.15) {$\scriptstyle +$};
            \node at (0.333,0.15) {$\scriptstyle +$};
            \node at (0.666,0.15) {$\scriptstyle -$};
            \node at (1,0.15) {$\scriptstyle -$};
            \node at (1.333,0.15) {$\scriptstyle +$};
            \node[circle, fill, inner sep=0pt, minimum size = 1mm] (4056587) at (0.0,0) {};
            \node[circle, fill, inner sep=0pt, minimum size = 1mm] (4056588) at (0.333,0) {};
            \node[circle, fill, inner sep=0pt, minimum size = 1mm] (4056589) at (0.667,0) {};
            \node[circle, fill, inner sep=0pt, minimum size = 1mm] (4056590) at (1.0,0) {};
            \node[circle, fill, inner sep=0pt, minimum size = 1mm] (4056591) at (1.333,0) {};
            \draw[double, thick] (4056587) to[bend right=50] (4056590); \draw[double, thick] (4056588) to[bend right=45] (4056589); 
        \end{tikzpicture}
        \vspace{-1.5mm}
    \end{center}
    By induction, $\ush(\A(\bar \gamma))$ has a unique maximal element $\sigma_{\max}'$ constructed as described in the theorem. Its image $f(\sigma_{\max}')$ equals the unlabeled shape $\sigma_{\max}$, which we must now see is actually the unique maximal element of $\ush(\A(\gamma))$. First, suppose $\sigma \in \ush(\A(\gamma))$ is in the image of $f$, or equivalently that $\sigma$ has $\{i < i+1\}$ as a marked arc.  Since $f$ does not add any unpaired fixed points, any transformation which can be performed in $\ush(\A(\bar \gamma))$ can also be performed in $\ush(\A(\gamma))$, so $f$ is a poset homomorphism. This implies $\sigma \leq f(\sigma_{\max}') = \sigma_{\max}$.

    Now suppose $\sigma$ is not in the image of $f$; we claim $\sigma$ cannot be maximal. Consider two cases:
    \begin{itemize}
        \item Suppose $\sigma$ pairs $i$ with $j'$ and $i{+}1$ with $j$. Then $i+1 < j < j'$, for otherwise there would be an unpaired fixed point of $\sigma$ below a marked arc, or else two marked arcs would cross. That is, $\sigma$ has the form
        \begin{center}
            \begin{tikzpicture}[scale=1.2]
                \node at (-1,0.45) {$\scriptstyle 1$};
                \node at (-0.666,0.45) {$\scriptstyle \cdots$};
                \node at (-0.333,0.45) {$\scriptstyle i{-}1$};
                \node at (0,0.46) {$\scriptstyle i$};
                \node at (0.333,0.45) {$\scriptstyle i{+}1$};
                \node at (1,0.46) {$\scriptstyle j$};
                \node at (1.666,0.45) {$\scriptstyle j'$};

                \node at (-1,0.15) {$\scriptstyle \alpha$};
                \node at (-0.666,0.15) {$\scriptstyle \cdots$};
                \node at (-0.333,0.15) {$\scriptstyle \alpha$};
                \node at (0,0.15) {$\scriptstyle \alpha$};
                \node at (0.333,0.15) {$\scriptstyle \beta$};
                \node at (1,0.15) {$\scriptstyle \alpha$};
                \node at (1.666,0.15) {$\scriptstyle \beta$};
                \node[circle, fill, inner sep=0pt, minimum size = 1mm] (4056587) at (0.0,0) {};
                \node[circle, fill, inner sep=0pt, minimum size = 1mm] (4056589) at (0.333,0) {};
                \node (4056590) at (0.7,0) {$\cdots$};
                \node[circle, fill, inner sep=0pt, minimum size = 1mm] (4056591) at (1,0) {};
                \node (4056592) at (1.333,0) {$\cdots$};
                \node[circle, fill, inner sep=0pt, minimum size = 1mm] (4056593) at (1.666,0) {};
                \draw[double, thick] (4056587) to[bend right=45] (4056593); \draw[double, thick] (4056589) to[bend right=45] (4056591);
            \end{tikzpicture}
        \end{center}
        where $\{\alpha,\beta\} = \{+,-\}$ and there are no unpaired fixed points in $[i,j']$. But now we can apply the transformation replacing the marked arcs $\{i<j'\}, \{i{+}1<j\}$ by $\{i<i{+}1\}, \{j<j'\}$, so $\sigma$ is not maximal.

        \item Suppose one of $i, i{+}1$ is unpaired in $\sigma$ (they cannot both be unpaired). Then in fact $i$ must be unpaired, because otherwise it would have to be paired with some $j > i{+}1$, but then the unpaired fixed point $i{+}1$ would be below the marked arc $\{i < j\}$. So, say $i{+}1$ is paired with $j$. We must have $j > i{+}1$, because otherwise the unpaired fixed point $i$ would be below the marked arc $\{j < i{+}1\}$. That is, $\sigma$ has the form
        \begin{center}
            \begin{tikzpicture}[scale=1.2]
                \node at (-1,0.45) {$\scriptstyle 1$};
                \node at (-0.666,0.45) {$\scriptstyle \cdots$};
                \node at (-0.333,0.45) {$\scriptstyle i{-}1$};
                \node at (0,0.46) {$\scriptstyle i$};
                \node at (0.333,0.45) {$\scriptstyle i{+}1$};
                \node at (1,0.46) {$\scriptstyle j$};

                \node at (-1,0.15) {$\scriptstyle \alpha$};
                \node at (-0.666,0.15) {$\scriptstyle \cdots$};
                \node at (-0.333,0.15) {$\scriptstyle \alpha$};
                \node at (0,0.15) {$\scriptstyle \alpha$};
                \node at (0.333,0.15) {$\scriptstyle \beta$};
                \node at (1,0.15) {$\scriptstyle \alpha$};
                \node[circle, fill, inner sep=0pt, minimum size = 1mm] (4056587) at (0.0,0) {};
                \node[circle, fill, inner sep=0pt, minimum size = 1mm] (4056589) at (0.333,0) {};
                \node (4056590) at (0.7,0) {$\cdots$};
                \node[circle, fill, inner sep=0pt, minimum size = 1mm] (4056591) at (1,0) {};
                \draw[double, thick] (4056589) to[bend right=45] (4056591);
            \end{tikzpicture}
        \end{center}
        Now we can apply the transformation replacing the marked arc $\{i{+}1 < j\}$ by $\{i < i{+}1\}$, so $\sigma$ is not maximal.
    \end{itemize}
    \end{proof}

    Theorem~\ref{thm:unique-max-ush} gives a prescription for generating all of $\ush(\A(\gamma))$ from one element $\sigma_{\max}$ by applying simple transformations. It would be interesting to be able to do this at the level of reduced words: that is, to give a uniform way of beginning with a relation $\ush(v) \to \ush(w)$ and producing $a \in \Red(v)$ and $b \in \Red(w)$ which are related in some simple way.

\section{Enumerating reduced words for clans}
\label{sec:enum}

\begin{defn} Let $a = a_1 \cdots a_\ell$ be a word on the alphabet $\NN$. A \emph{compatible sequence} for $a$ is a word $b$ of length $\ell$ such that
    \begin{itemize}
        \item $1 \leq b_1 \leq \cdots \leq b_\ell$
        \item $b_i \leq a_i$ for each $i$
        \item For each $i$, if $a_i < a_{i+1}$, then $b_i < b_{i+1}$.
    \end{itemize}
    We use bold for compatible sequences just as for reduced words.
\end{defn}
Let $\comp(a)$ be the set of compatible sequences for $a$. For instance, $\comp(\bf 3213) = \{\bf 1112, 1113\}$ while $\comp(\bf 3231)$ is empty.

\begin{defn}  \label{defn:schubert} The \emph{Schubert polynomial} of a permutation $w \in S_n$ is
    \begin{equation*}
        \S_w = \sum_{a \in \Red(w)} \sum_{b \in \comp(a)} x_{b_1} \cdots x_{b_\ell}.
    \end{equation*}
    The \emph{Stanley symmetric function} of $w$ is the formal power series $F_w = \lim_{m \to \infty} \S_{w^{+m}}$, where $w^{+m}$ is the permutation defined inductively by $w^{+m} = (w^{+(m-1)})^{+1}$ and $w^{+1} = 1(w_1+1) \cdots (w_n+1)$ in one-line notation.
\end{defn}
It is not hard to check that $\lim_{m \to \infty} \S_{w^{+m}}$ does exist as a formal power series, so that $F_w$ is well-defined. The fact that it is actually a symmetric function is rather less obvious, and was proved by Stanley \cite{stanleysymm}. 

\begin{defn} \label{defn:clan-schubert} The \emph{Schubert polynomial} of a $(p,q)$-clan $\gamma$ is
    \begin{equation*}
        \clS_\gamma = \sum_{a \in \clRed(\gamma)} \sum_{b \in \comp(a)} x_{b_1} \cdots x_{b_\ell}.
    \end{equation*}
    The \emph{Stanley symmetric function} of $\gamma$ is $\lim_{m\to \infty} \clS_{\gamma^{+m}}$. Here $\gamma^{+m}$ is the $(p+m,q+m)$-clan defined inductively by $\gamma^{+m} = (\gamma^{+(m-1)})^{+1}$ and where $\gamma^{+1}$ is obtained from $\gamma$ by shifting all of $1, 2, \ldots, n$ up by one and then multiplying by the cycle $(1\,\,n{+}2)$.
\end{defn}
\newpage
\begin{ex} As per Example~\ref{ex:atoms},
    \begin{equation*}
        \clRed(+--+) = \{\bf 3213, 3231, 2321, 1213, 1231, 2123\}.
    \end{equation*}
    $\comp(a)$ is empty for all $a \in \clRed(+--+)$ except $\bf 3213$ and $\bf 2123$, while $\comp(\bf 3213) = \{\bf 1112, 1113\}$ and $\comp(\bf 2123) = \{\bf 1123\}$. Thus $\clS_{+--+} = x_1^3 x_2 + x_1^3 x_3 + x_1^2 x_2 x_3$. Also,
    \begin{equation*}
        (+--+)^{+1} \,=\, \raisebox{-2mm}{\begin{tikzpicture}
            \node[circle, fill, inner sep=0pt, minimum size = 1mm] (790359857) at (0.0,0) {}; 
            \node (790359858) at (0.333,0) {$+$}; 
            \node (790359859) at (0.667,0) {$-$}; 
            \node (790359860) at (1.0,0) {$-$}; 
            \node (790359861) at (1.333,0) {$+$}; 
            \node[circle, fill, inner sep=0pt, minimum size = 1mm] (790359862) at (1.667,0) {}; 
            \draw (790359857) to[bend left=82] (790359862); 
        \end{tikzpicture}}
    \end{equation*}
\end{ex}
Since $\clRed(\gamma) = \bigcup_{w \in \A(\gamma)} \Red(w)$ we have $\clS_\gamma = \sum_{w \in \A(\gamma)} \S_w$.

\begin{prop} \label{prop:stable-limit} $\A(\gamma^{+1}) = \{w^{+1} : w \in \A(\gamma)\}$, and $F_{\gamma}$ is a well-defined symmetric function equal to $\sum_{w \in \A(\gamma)} F_w$. \end{prop}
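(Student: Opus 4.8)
The plan is to first establish the set identity $\A(\gamma^{+1}) = \{w^{+1} : w \in \A(\gamma)\}$ and then deduce the statement about $F_\gamma$ from it formally. Here I identify $S_{n+1}$ with the subgroup of $S_{n+2} = S_{(p+1)+(q+1)}$ fixing $n+2$, so that $w^{+1}$ makes sense as a $(p{+}1,q{+}1)$-atom. Granting the identity, induction on $m$ gives $\A(\gamma^{+m}) = \{w^{+m} : w \in \A(\gamma)\}$ (using $(\gamma^{+(m-1)})^{+1} = \gamma^{+m}$), so the relation $\clS_\delta = \sum_{v \in \A(\delta)} \S_v$ noted just above yields $\clS_{\gamma^{+m}} = \sum_{w \in \A(\gamma)} \S_{w^{+m}}$. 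Since $\A(\gamma)$ is finite and each $\lim_{m\to\infty}\S_{w^{+m}} = F_w$ exists, we may interchange limit and sum: $F_\gamma = \lim_{m\to\infty}\clS_{\gamma^{+m}} = \sum_{w\in\A(\gamma)} F_w$. This is a finite sum of symmetric functions (Stanley), hence a well-defined symmetric function.

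To prove the identity I would use Algorithm~\ref{alg:atoms} together with Theorem~\ref{thm:cjw}. The matching diagram of $\gamma^{+1}$ consists of the arc $\{1, n+2\}$ together with a copy of the diagram of $\gamma$ placed in positions $2, \dots, n+1$. The crucial observation is that in any run of Algorithm~\ref{alg:atoms} on $\gamma^{+1}$ the pair $\{1, n+2\}$ is necessarily deleted first: every valid pair other than $\{1, n+2\}$ has both endpoints in $[2, n+1]$, hence is strictly nested inside the matched pair $\{1,n+2\}$, which step (a) forbids as long as $\{1, n+2\} \in S$; and $\{1, n+2\}$ itself is always deletable. Deleting it at step $s = 0$ sets $w(1) = 1$ and $w(n+2) = n+2$, and from then on $S \subseteq \{2,\dots,n+1\}$, so the remaining run is exactly a run of Algorithm~\ref{alg:atoms} on $\gamma$ under the shift $i \mapsto i-1$, with every assigned value increased by $1$ (a pair deleted at step $s{+}1$ gets values $s{+}2, n{+}1{-}s$ instead of $s{+}1, n{-}s$) and the fill-in step (b) assigning the shifted leftover values to the shifted leftover fixed points. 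Consequently the output satisfies $w(1)=1$, $w(i+1) = v(i)+1$ for $i \in [n]$, and $w(n+2)=n+2$, i.e.\ $w = v^{+1}$ where $v \in \A(\gamma)$ is the atom produced by the corresponding run on $\gamma$; and every $v^{+1}$ arises in this way. (The same argument can be run at the level of labelled shapes via Proposition~\ref{prop:labelled-shape-rules}: the labelled shapes of $\gamma^{+1}$ are exactly those of $\gamma$, shifted by one, with an extra unmarked arc $\{1, n+2\}$ inserted and given label $1$, and $\lsh^{-1}$ intertwines this insertion with $v \mapsto v^{+1}$.)

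The hard part will be the bookkeeping in the second step: one must check that the value assignments of Algorithm~\ref{alg:atoms} on $\gamma^{+1}$ are \emph{uniformly} shifted by $1$ from those on $\gamma$, so that the output is literally $v^{+1}$ rather than merely a conjugate of it. Concretely, with $m = \min(p,q)$ the $m{+}1$ pairs of $\gamma^{+1}$ consume the values $\{1,\dots,m+1\}\cup\{n+2-m,\dots,n+2\}$ (versus $\{1,\dots,m\}\cup\{n-m+1,\dots,n\}$ for $\gamma$), so the leftover middle block $\{m+2,\dots,n+1-m\}$ is precisely the $\gamma$-block shifted by one, which matches the shifted positions of the leftover same-sign fixed points handled in step (b); this is exactly what forces the shifted output. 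A minor cosmetic point to settle at the outset is the size discrepancy: $w^{+1}$ as defined in Definition~\ref{defn:schubert} lies in $S_{n+1}$, and one regards it in $S_{n+2}$ by appending $n+2$ as a fixed point, consistent with $w(n+2)=n+2$ above.
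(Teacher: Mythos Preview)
Your argument is correct, but it takes a different route from the paper's. The paper establishes $\A(\gamma^{+1}) = \{w^{+1} : w \in \A(\gamma)\}$ by working at the level of reduced \emph{words} rather than atoms: writing $a^{+1}$ for the word obtained by incrementing every letter of $a$, it is immediate that $a \in \clRed(\gamma)$ implies $a^{+1} \in \clRed(\gamma^{+1})$, and for the converse the paper observes that in any covering $\delta \ast s_i < \delta$ the size of the largest $2$-cycle cannot decrease (Figure~\ref{fig:local-moves}). Hence any clan in $\Clan_{p+1,q+1}$ admitting a reduced word containing the letter $1$ or $n{+}1$ has largest cycle of size $< n{+}1$; since $\gamma^{+1}$ contains the cycle $(1\,\,n{+}2)$, its reduced words avoid $1$ and $n{+}1$ and are therefore shifts $a^{+1}$ of words $a \in \clRed(\gamma)$. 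Your approach via Algorithm~\ref{alg:atoms} and Theorem~\ref{thm:cjw} trades this short global monotonicity observation for a direct structural analysis of how the algorithm runs on $\gamma^{+1}$; it is a little heavier on bookkeeping (exactly the value-shift verification you flag as ``the hard part'') but has the virtue of making the bijection $v \mapsto v^{+1}$ completely explicit at the atom level, and it generalizes cleanly to the labelled-shape description. The deduction of $F_\gamma = \sum_{w\in\A(\gamma)} F_w$ from the atom identity is the same in both arguments.
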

    \begin{proof} Given a word $a = a_1 \cdots a_{\ell}$, let $a^{+1} = (a_1+1)\cdots (a_\ell+1)$. It is clear that if $a \in \clRed(\gamma)$, then $a^{+1} \in \clRed(\gamma^{+1})$.
        
        Conversely, if $\gamma \ast s_i < \gamma$, one sees from Figure~\ref{fig:local-moves} that the size of the largest cycle in $\gamma$---i.e. the maximum of $|j-i|$ over all 2-cycles $(i\,j)$ in $\gamma$---is no larger than the size of the largest cycle in $\gamma \ast s_i$. This implies that if some $\delta \in \Clan_{p+1,q+1}$ has a reduced word containing the letter $1$ or $n+1$, then the largest cycle in $\delta$ has size $< n+1$. Since $\gamma^{+1}$ does have a cycle of size $n+1$, its reduced words are supported on the alphabet $\{2, 3, \ldots, n\}$, and so they must all have the form $a^{+1}$ for some $a \in \clRed(\gamma)$. This proves $\A(\gamma^{+1}) = \{w^{+1} : w \in \A(\gamma)\}$, and now
        \begin{equation*}
            F_\gamma = \lim_{m \to \infty} \sum_{w \in \A(\gamma^{+m})} \S_w = \lim_{m \to \infty} \sum_{w \in \A(\gamma)} \S_{w^{+m}} = \sum_{w \in \A(\gamma)} F_w.
        \end{equation*}
    \end{proof}

\begin{prop} \label{prop:reduced-word-count} Letting $\ell$ be the degree of $F_{\gamma}$, the coefficient of $x_1 x_2 \cdots x_{\ell}$ in $F_{\gamma}$ is $\#\Red(\gamma)$. \end{prop}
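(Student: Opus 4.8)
The plan is to extract the coefficient of $x_1 x_2 \cdots x_\ell$ directly from the definition $F_\gamma = \lim_{m \to \infty} \clS_{\gamma^{+m}}$, using the fact that all reduced words of $\gamma^{+m}$ are supported on large letters, which neutralizes the constraint $b_i \le a_i$ in the definition of a compatible sequence.

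First I would check that $F_\gamma$ is homogeneous of degree equal to the common length of all reduced words of $\gamma$. Since weak order on $\Clan_{p,q}$ is graded, every saturated chain from $\gamma_{p,q}$ to $\gamma$ has the same length, say $r$, so every element of $\clRed(\gamma)$ has length $r$. Iterating Proposition~\ref{prop:stable-limit} gives $\clRed(\gamma^{+m}) = \{a^{+m} : a \in \clRed(\gamma)\}$ for all $m$, so every reduced word of $\gamma^{+m}$ also has length $r$ and has all of its letters $\ge m+1$. Consequently every monomial appearing in $\clS_{\gamma^{+m}} = \sum_{a \in \clRed(\gamma^{+m})} \sum_{b \in \comp(a)} x_{b_1} \cdots x_{b_r}$ has degree $r$, so $F_\gamma$ is homogeneous of degree $r$; that is, $\ell = r$ in the statement of the proposition.

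Next, I would fix an $m \ge \ell$ and compute the coefficient of $x_1 x_2 \cdots x_\ell$ in $\clS_{\gamma^{+m}}$. For a fixed $a \in \clRed(\gamma^{+m})$, a compatible sequence $b \in \comp(a)$ contributes the monomial $x_1 \cdots x_\ell$ precisely when $b = (1, 2, \ldots, \ell)$: since $b$ is weakly increasing, $x_{b_1} \cdots x_{b_\ell} = x_1 \cdots x_\ell$ forces $b_i = i$. Conversely $(1, 2, \ldots, \ell)$ always lies in $\comp(a)$, because $b_i = i \le \ell \le m < a_i$ takes care of the inequality $b_i \le a_i$, and the descent condition $a_i < a_{i+1} \Rightarrow b_i < b_{i+1}$ is automatic for a strictly increasing $b$. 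Hence each $a \in \clRed(\gamma^{+m})$ contributes exactly one copy of $x_1 \cdots x_\ell$, so the coefficient of $x_1 \cdots x_\ell$ in $\clS_{\gamma^{+m}}$ equals $\#\clRed(\gamma^{+m}) = \#\clRed(\gamma)$. This value is independent of $m$ for $m \ge \ell$, so it is also the coefficient of $x_1 \cdots x_\ell$ in $F_\gamma = \lim_{m \to \infty} \clS_{\gamma^{+m}}$, which is $\#\Red(\gamma)$.

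I do not anticipate a serious obstacle: the only delicate point is making sure the auxiliary constraint $b_i \le a_i$ does not obstruct the sequence $(1,2,\ldots,\ell)$, and that is exactly what passing to $\gamma^{+m}$ (equivalently, to the stable limit) is designed to arrange. A slightly slicker but equivalent route would use the ``stable'' form of Stanley's theorem, $F_w = \sum_{a \in \Red(w)} \sum_b x_{b_1}\cdots x_{b_{\ell(w)}}$ over all weakly increasing $b$ obeying only the descent condition, together with $F_\gamma = \sum_{w \in \A(\gamma)} F_w$ from Proposition~\ref{prop:stable-limit} and the disjointness of the sets $\Red(w)$ over distinct atoms $w$; but working directly with $\clS_{\gamma^{+m}}$ avoids invoking that reformulation.
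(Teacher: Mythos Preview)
Your argument is correct and is essentially the same as the paper's: both pass to $\gamma^{+m}$ for $m$ large enough that every letter of every reduced word is at least $\ell$, then observe that $(1,2,\ldots,\ell)$ is the unique compatible sequence giving the squarefree monomial. The paper is terser (it takes $m\ge \ell-1$ and omits your explicit homogeneity check), but the idea is identical.
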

    \begin{proof} If $m \geq \ell - 1$, then every letter of $a^{+m}$ for $a \in \clRed(\gamma)$ is at least $\ell$, and so $\comp(a^{+m})$ contains $\bf 12\cdots \ell$. Proposition~\ref{prop:stable-limit} therefore shows that the coefficient of $x_1 x_2 \cdots x_{\ell}$ in $\clS_{\gamma^{+m}}$ is $\#\Red(\gamma^{+m}) = \#\Red(\gamma)$ for all $m \geq \ell-1$.
    \end{proof} 

This proposition holds equally well for Stanley symmetric functions of permutations, which was Stanley's motivation for defining $F_w$. One can then use symmetric function techniques to extract coefficients of $F_w$ and enumerate $\Red(w)$. For instance, Stanley showed that $F_{n\cdots 21}$ is the Schur function $s_{(n-1,n-2,\ldots,1)}$, and comparing coefficients of $x_1 x_2 \cdots$ shows that $\#\Red(n\cdots 21)$ equals the number of standard tableaux of shape $(n-1,n-2,\ldots,1)$ \cite{stanleysymm}. Our intent is to do the same for clans.

\begin{defn} For $1 \leq i < n$, the \emph{divided difference operator} $\partial_i$ acting on $R[x_1, \ldots, x_n]$ for a commutative ring $R$ sends $f$ to $\partial_i f = (f - s_i f)/(x_i - x_{i+1})$, where $s_i$ acts on $R[x_1, \ldots, x_n]$ by swapping $x_i$ and $x_{i+1}$. The \emph{isobaric divided difference operator} $\pi_i$ is defined by $\pi_i(f) = \partial_i(x_i f)$. \end{defn}

Lascoux and Sch\"utzenberger defined Schubert polynomials for $S_n$ by setting $\S_{n\cdots 21} = x_1^{n-1} x_2^{n-2} \cdots x_{n-1}$ and then using the recurrence $\S_{ws_i} = \partial_i \S_w$ to define all the other polynomials by induction on weak order \cite{lascouxschutzenbergerschubert}. Earlier work of Bernstein, Gelfand, and Gelfand \cite{BGG-schubert} shows that this definition ensures that $\S_w$ represents the cohomology class of the Schubert variety in $\Fl(\CC^n)$ labeled by $w$. It is a theorem of Billey, Jockusch, and Stanley \cite{billeyjockuschstanley} that this definition is equivalent to Definition~\ref{defn:schubert}.

Wyser and Yong \cite{wyser-yong-clans} define clan Schubert polynomials using the same strategy: they give an explicit formula when the clan is matchless, and apply divided difference operators to produce the polynomials for other clans by induction on weak order. Their formulas are given in terms of flagged Schur polynomials, which we now define.

\begin{defn} Let $X_k$ be the alphabet $\{x_1, \ldots, x_k\}$. Let $\lambda$ be a partition and $\phi$ a sequence of natural numbers of the same length. The \emph{flagged Schur polynomial} of shape $\lambda$ with flag $\phi$ is the polynomial $\sum_T x^T$ where $T$ runs over all semistandard tableaux of shape $T$ whose entries in each row $i$ come from $\{1, 2, \ldots, \phi_i\}$, and as usual $x^T$ is the content monomial $\prod_i x_i^{\text{\# of $i$'s in $T$}}$. We write $s_{\lambda}(X_{\phi_1}, \ldots, X_{\phi_\ell})$ or just $s_{\lambda}(X_\phi)$ for this polynomial. \end{defn}

\begin{ex} \hfill
\begin{itemize}
\item $s_{21}(X_1, X_2) = x_1^2 x_2$ is a sum over the single tableau $\scriptsize \young(11,2)$.
\item $s_{21}(X_2, X_2) = x_1^2 x_2 + x_1 x_2^2$ is a sum over the two tableaux $\scriptsize \young(11,2)$ and $\scriptsize \young(12,2)$.
\item $s_{21}(X_1, X_1) = 0$.
\end{itemize}
\end{ex}

\begin{defn} Given a matchless $(p,q)$-clan $\gamma$, let $\phi^+(\gamma)$ be the list of positions of the $+$'s in increasing order, and likewise $\phi^-(\gamma)$ the list of positions of the $-$'s. Also define two partitions $\lambda^+(\gamma)$ and $\lambda^-(\gamma)$ by
    \begin{align*}
        &\lambda^+(\gamma)_i = \#\{j : \phi^-(\gamma)_j > \phi^+(\gamma)_i\} \qquad \text{for $i = 1, \ldots, p$}\\
         &\lambda^-(\gamma)_i = \#\{j : \phi^+(\gamma)_j > \phi^-(\gamma)_i\} \qquad \text{for $i = 1, \ldots, q$}. 
    \end{align*}
\end{defn}
The map $\phi^+(\gamma) \mapsto \lambda^+(\gamma)$ is a bijection between $p$-subsets of $[p+q]$ and partitions whose Young diagram is contained in the $p \times q$ rectangle $[p] \times [q]$. Graphically, if one labels the $p+q$ segments of the southwest boundary of the Young diagram of $\lambda^+(\gamma)$ with $1, 2, \ldots, p+q$ from top to bottom, the set of vertical segments is $\phi^+(\gamma)$ and the set of horizontal segments is $\phi^-(\gamma)$.

\begin{ex} Letting $\gamma = +--+-+++-$, we have
    \begin{center}
    \begin{tabular}{c}
        $\phi^- = (\textcolor{blue}{2},\textcolor{blue}{3},\textcolor{blue}{5},\textcolor{blue}{9})$\\
        $\lambda^- = (4,4,3,0)$
    \end{tabular} \quad and \quad
    \begin{tabular}{ccc}
        $\phi^+ = (\textcolor{red}{1},\textcolor{red}{4},\textcolor{red}{6},\textcolor{red}{7},\textcolor{red}{8})$\\
        $\lambda^+ = (4,2,1,1,1)$
    \end{tabular} \quad and \quad $\lambda^+ = $ \raisebox{-12mm}{ 
    \begin{tikzpicture}[scale=0.5]
            \draw[very thick, fill=black!5!white] (0,0) to node[color=blue, yshift=-3pt] {\tiny 9} (1,0) to node[color=red, xshift=2.5pt, yshift=-1pt] {\tiny 8} (1,1) to node[color=red, xshift=2.5pt, yshift=-1pt] {\tiny 7} (1,2) to node[color=red, xshift=2.5pt, yshift=-1pt] {\tiny 6} (1,3) to node[color=blue, yshift=-3pt] {\tiny 5} (2,3) to node[color=red, xshift=2.5pt, yshift=-1pt] {\tiny 4} (2,4) to node[color=blue, yshift=-3pt] {\tiny 3} (3,4) to node[color=blue, yshift=-3pt] {\tiny 2} (4,4) to node[color=red, xshift=2.5pt] {\tiny 1} (4,5) to (0,5) to (0,0);
            \draw (0,0) to (4,0);
            \draw (0,1) to (4,1);
            \draw (0,2) to (4,2);
            \draw (0,3) to (4,3);
            \draw (0,4) to (4,4);
            \draw (0,5) to (4,5);
            \draw (0,0) to (0,5);
            \draw (1,0) to (1,5);
            \draw (2,0) to (2,5);
            \draw (3,0) to (3,5);
            \draw (4,0) to (4,5);
            
        \end{tikzpicture} }
    \end{center}
\end{ex}

If $\lambda \subseteq [p] \times [q]$, write $\lambda^\vee$ for the partition whose Young diagram is the complement of $\lambda$ in $[p] \times [q]$ (rotated $180^\circ$). Also let $\lambda^t$ denote the partition conjugate to $\lambda$. For a matchless $(p,q)$-clan $\gamma$, let $\rev(\gamma)$ be the clan obtained by reversing the one-line notation of $\gamma$. Let $\neg(\gamma)$ be the $(q,p)$-clan obtained by switching the signs of all fixed points in $\gamma$. The next proposition is clear from the description above of the map $\gamma \mapsto \lambda^+(\gamma)$ in terms of lattice paths.
\begin{prop} \label{prop:symmetries} $\lambda^+(\gamma)^\vee = \lambda^+(\rev(\gamma))$ and $\lambda^+(\gamma)^t = \lambda^+(\neg(\rev(\gamma))$, and therefore
\begin{equation*}
\lambda^-(\gamma) = \lambda^+(\neg(\gamma)) = (\lambda^+(\gamma)^t)^\vee.
\end{equation*} \end{prop}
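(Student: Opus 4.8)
The plan is to deduce every assertion from the lattice-path description of $\gamma \mapsto \lambda^+(\gamma)$ given just before the proposition. I would encode a matchless $(p,q)$-clan $\gamma$ by the word $u(\gamma) \in \{V,H\}^n$ whose $i$\th letter is $V$ if position $i$ of $\gamma$ carries a $+$ and $H$ if it carries a $-$. By the stated description, reading $u(\gamma)$ from left to right gives exactly the sequence of vertical and horizontal unit steps along the southwest boundary of the Young diagram of $\lambda^+(\gamma) \subseteq [p] \times [q]$, read from top to bottom; the same recipe applied to $\phi^-$ instead of $\phi^+$ shows that interchanging $V$ and $H$ in $u(\gamma)$ produces the boundary word of $\lambda^-(\gamma) \subseteq [q] \times [p]$. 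So the whole proposition reduces to tracking how this boundary word transforms under $\rev$, $\neg$, $(-)^\vee$, and $(-)^t$.

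The key step is to record the dictionary. On the clan side: $\rev$ replaces $u(\gamma)$ by its letter-reversal, $\neg$ replaces $u(\gamma)$ by the word with $V$ and $H$ interchanged, these two operations commute, and $\rev$ is an involution. On the partition side: rotating the complement of $\lambda \subseteq [p]\times[q]$ by $180^\circ$ (the operation $\lambda \mapsto \lambda^\vee$, taken in $[q]\times[p]$ when the partition has been transposed and in $[p]\times[q]$ otherwise) traces the same boundary curve backwards while keeping each unit step in its own direction, hence reverses the boundary word; and conjugation $\lambda \mapsto \lambda^t$ reflects across the main diagonal, which reverses the boundary word and interchanges $V$ and $H$ while swapping the ambient rectangle $[p]\times[q]$ for $[q]\times[p]$. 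I expect verifying these last two statements — by inspecting how $180^\circ$ rotation and diagonal reflection act on the labelled boundary staircase, and keeping the ambient rectangles straight so that all complements make sense — to be the only point needing real care; it is routine but easy to get turned around.

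Granting the dictionary, the two displayed identities fall out: the boundary word of $\lambda^+(\gamma)^\vee$ is $u(\gamma)$ reversed, which is $u(\rev(\gamma))$, so $\lambda^+(\gamma)^\vee = \lambda^+(\rev(\gamma))$; and the boundary word of $\lambda^+(\gamma)^t$ is $u(\gamma)$ reversed and with $V$ and $H$ interchanged, which (reversal and swapping commuting) equals $u(\neg(\rev(\gamma)))$, so $\lambda^+(\gamma)^t = \lambda^+(\neg(\rev(\gamma)))$. For the concluding chain I would note first that $\lambda^-(\gamma) = \lambda^+(\neg(\gamma))$ directly from the definitions, since $\neg$ interchanges $\phi^+$ with $\phi^-$ and the defining formulas for $\lambda^+$ and $\lambda^-$ are mirror images; then apply $(-)^\vee$ to the second identity and feed the result into the first identity applied to the $(q,p)$-clan $\neg(\rev(\gamma))$, obtaining
\[
(\lambda^+(\gamma)^t)^\vee = \lambda^+(\neg(\rev(\gamma)))^\vee = \lambda^+\!\bigl(\rev(\neg(\rev(\gamma)))\bigr) = \lambda^+(\neg(\gamma)) = \lambda^-(\gamma),
\]
where $\rev(\neg(\rev(\gamma))) = \neg(\gamma)$ because $\rev$ and $\neg$ commute and $\rev$ is an involution.
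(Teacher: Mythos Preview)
Your argument is correct and is exactly the approach the paper takes: the paper simply asserts that the proposition ``is clear from the description above of the map $\gamma \mapsto \lambda^+(\gamma)$ in terms of lattice paths'' and gives no further details, so your proposal is a faithful elaboration of that one-line justification.
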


\begin{defn} \label{defn:wyser-yong-schubert} The \emph{Wyser-Yong Schubert polynomials} labeled by the members of $\Clan_{p,q}$ are defined by induction on clan weak order using the recurrence
    \begin{center} \renewcommand{\arraystretch}{1.5}
    \begin{tabular}{ll}
        $\displaystyle \WYS_\gamma = s_{\lambda^+(\gamma)}(X_{\phi^+(\gamma)})\, s_{\lambda^-(\gamma)}(X_{\phi^-(\gamma)})$ & if $\gamma$ is matchless\\
        $\displaystyle \WYS_{\gamma \ast s_i} = \partial_i \WYS_{\gamma}$ & if $\gamma \ast s_i < \gamma$.
    \end{tabular}
\end{center}
 \end{defn}

\begin{thm}[\cite{wyser-yong}] \label{thm:wyser-yong-consistency} Definition~\ref{defn:wyser-yong-schubert} makes sense:  given a fixed $\gamma$, the polynomial $\WYS_\gamma$ is independent of the choice of matchless clan $\gamma'$ and saturated chain
\begin{equation*}
\gamma < \cdots < (\gamma' \ast s_{a_1}) \ast s_{a_2} < \gamma' \ast s_{a_1} < \gamma'.
\end{equation*}
used to compute it. Also, if $\gamma \ast s_i \not< \gamma$, then $\partial_i \WYS_\gamma = 0$ (this includes the case where $\gamma \ast s_i$ is not defined). \end{thm}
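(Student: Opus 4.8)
I would deduce the theorem from the geometry motivating Definition~\ref{defn:wyser-yong-schubert}, by proving that $\WYS_\gamma$ represents the cohomology class $[\overline{Y_\gamma}]\in H^{*}(\Fl(\CC^{n});\ZZ)$. Since that class depends only on the orbit $Y_\gamma$, hence only on $\gamma$, this makes consistency of the recursion automatic. The standard ingredients: the projection $\pi_i\colon\Fl(\CC^n)\to\Fl_i$ forgetting the $i$-dimensional subspace is a $\mathbb{P}^1$-bundle; its Gysin pushforward $\pi_{i*}$ lowers cohomological degree by $2$; and $\pi_i^{*}\pi_{i*}=\partial_i$ on $H^{*}(\Fl(\CC^n))$ (the usual description of the Bernstein--Gelfand--Gelfand operators on the flag variety). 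By Remark~\ref{rem:geom-weak-order}, $\pi_i^{-1}(\pi_i(\overline{Y_\gamma}))=\overline{Y_{\gamma\ast s_i}}$ whenever $\gamma\ast s_i<\gamma$.

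\textbf{Induction on weak order.} I would prove by induction on the rank of $\delta$ in clan weak order (matchless clans at rank $0$) that $\Sigma_\delta:=[\overline{Y_\delta}]$ satisfies $\partial_{a_k}\cdots\partial_{a_1}\WYS_{\gamma'}=\Sigma_\delta$ for \emph{every} matchless $\gamma'\geq\delta$ and every saturated chain $\gamma'=\mu_0\gtrdot\cdots\gtrdot\mu_k=\delta$ with edge labels $s_{a_1},\dots,s_{a_k}$; combined with the fact that every clan lies below a matchless clan, this is exactly the consistency statement (with the bonus $\WYS_\delta=[\overline{Y_\delta}]$). For the base case $\delta$ matchless one must check that $s_{\lambda^+(\delta)}(X_{\phi^+(\delta)})\,s_{\lambda^-(\delta)}(X_{\phi^-(\delta)})$ represents $[\overline{Y_\delta}]$: here $\overline{Y_\delta}$ is the intersection inside $\Fl(\CC^n)$ of a variety imposing Schubert conditions only on the subspaces $F_j\cap\CC^p$ with one imposing conditions only on $F_j\cap\CC^q$; these meet transversally, so the class factors, and each factor is the indicated flagged Schur polynomial by a Giambelli-type computation — this is precisely the explicit formula of \cite{wyser-yong}. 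For the inductive step, any saturated chain ending at a non-matchless $\delta$ passes through some $\gamma$ with $\delta=\gamma\ast s_{a_k}<\gamma$; the inductive hypothesis gives $\partial_{a_{k-1}}\cdots\partial_{a_1}\WYS_{\gamma'}=[\overline{Y_\gamma}]$ regardless of the choices made above $\gamma$, and then $\partial_{a_k}[\overline{Y_\gamma}]=\pi_{a_k}^{*}\pi_{a_k*}[\overline{Y_\gamma}]=[\pi_{a_k}^{-1}\pi_{a_k}(\overline{Y_\gamma})]=[\overline{Y_\delta}]$, independent of everything.

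\textbf{Vanishing, and the main obstacle.} If $\gamma\ast s_i\not<\gamma$ — this covers $\gamma\ast s_i$ undefined ($i,i{+}1$ matched by $\gamma$, or fixed points of equal sign) and $\gamma\ast s_i>\gamma$ — then $\overline{Y_\gamma}$ is $\pi_i$-vertical: $\overline{Y_\gamma}=\pi_i^{-1}(\pi_i(\overline{Y_\gamma}))$, so $[\overline{Y_\gamma}]=\pi_i^{*}\beta$ with $\beta=[\pi_i(\overline{Y_\gamma})]$. Then $\partial_i[\overline{Y_\gamma}]=\pi_i^{*}\pi_{i*}\pi_i^{*}\beta=\pi_i^{*}(\beta\cup\pi_{i*}(1))=0$ by the projection formula, since $\pi_{i*}(1)\in H^{-2}(\Fl_i)=0$. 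Hence $\partial_i\WYS_\gamma=0$. The real content here, and the main obstacle overall, is the combinatorics--geometry dictionary: that $\pi_i$ restricted to $\overline{Y_\gamma}$ is a $\mathbb{P}^1$-bundle exactly when $\gamma\ast s_i\not<\gamma$, and is generically one-to-one with $\pi_i^{-1}\pi_i(\overline{Y_\gamma})=\overline{Y_{\gamma\ast s_i}}$ exactly when $\gamma\ast s_i<\gamma$. This is the Richardson--Springer analysis for $\GL(\CC^p)\times\GL(\CC^q)$; one verifies it by checking case-by-case that the non-vertical situations are precisely the local moves of Figure~\ref{fig:local-moves}, and this — together with the Giambelli computation for matchless clans — is where the work lies \cite{richardson-springer, wyser-clans, wyser-yong}.

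\textbf{An algebraic alternative.} One can also bypass geometry, granting the matchless formula as \emph{defined}. Set $\clS_\gamma:=\sum_{w\in\A(\gamma)}\S_w$, which is visibly independent of all choices. Using $\partial_i\S_w=\S_{ws_i}$ or $0$ according to whether $s_i$ is or is not a right descent of $w$, together with the identity $\A(\gamma\ast s_i)=\{ws_i:w\in\A(\gamma),\ \ell(ws_i)<\ell(w)\}$ — which follows from Lemma~\ref{lem:braid-relation-closure} and the fact that $\clRed(\gamma\ast s_i)\cdot i$ is exactly the set of reduced words of $\gamma$ ending in $i$ — one obtains $\partial_i\clS_\gamma=\clS_{\gamma\ast s_i}$ when $\gamma\ast s_i<\gamma$ and $\partial_i\clS_\gamma=0$ otherwise (no reduced word of $\gamma$ ends in $i$ then, so each $\partial_i\S_w$ vanishes). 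Thus $\clS_\gamma$ obeys the same recursion as $\WYS_\gamma$; provided $\clS_\gamma$ also matches the flagged-Schur product for matchless $\gamma$ — again a finite computation, now via Algorithm~\ref{alg:atoms} — it follows that $\clS_\gamma=\WYS_\gamma$ and $\WYS_\gamma$ is well-defined.
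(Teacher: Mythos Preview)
The paper does not prove this theorem: it is quoted from \cite{wyser-yong-clans} (note the citation in the theorem header), and the only gloss the paper adds is the Remark explaining that the vanishing $\partial_i\WYS_\gamma=0$ when $\gamma\ast s_i\not<\gamma$ follows from the geometric interpretation of weak order. Your geometric plan is essentially the Wyser--Yong argument and is sound; the ``main obstacle'' you name---the Richardson--Springer dichotomy for $\pi_i|_{\overline{Y_\gamma}}$ and the Giambelli formula for matchless orbit closures---is exactly where the content lies, and you have correctly located it.

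Your algebraic alternative, however, has a real gap. You propose to define $\clS_\gamma=\sum_{w\in\A(\gamma)}\S_w$, verify the divided-difference recursion for it, and then conclude $\clS_\gamma=\WYS_\gamma$ ``provided $\clS_\gamma$ also matches the flagged-Schur product for matchless $\gamma$.'' But that proviso is the whole difficulty: it is not a ``finite computation via Algorithm~\ref{alg:atoms}'' for each fixed $(p,q)$ that one can wave through---you need it for all $(p,q)$, and it is a nontrivial identity of polynomials. Indeed, look at how the paper itself handles the equality $\clS_\gamma=\WYS_\gamma$ in the Lemma immediately following the theorem: it does \emph{not} verify the matchless base case directly. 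Instead it inducts upward from the minimal clan $\gamma_{p,q}$ (where both sides are $1$), uses the recursion on both sides---crucially invoking Theorem~\ref{thm:wyser-yong-consistency} for the $\WYS$ side---and then kills the difference by observing it lies in $\bigcap_i\ker\partial_i$ yet is a $\ZZ$-combination of $\S_w$ for $w\in S_n$, forcing it to vanish. So the paper's argument \emph{consumes} Theorem~\ref{thm:wyser-yong-consistency} rather than supplying it; your algebraic route cannot be made self-contained without an independent proof of the matchless identity, which you have not provided.
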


Wyser and Yong also show that $\WYS_\gamma$ represents the cohomology class  $[\overline{Y}_{\gamma}]$. Brion \cite{brion} had previously given a formula for $[\overline{Y}_{\gamma}]$ as a sum of Schubert classes, from which one can deduce a formula for $\WYS_\gamma$ as a sum of Schubert polynomials. In fact, this formula is simply Definition~\ref{defn:clan-schubert}, so the next lemma is not really new, but we include a self-contained proof because it is not entirely obvious that the summands in Brion's formula are indeed the $\S_w$ for $w \in \A(\gamma)$.

\begin{rem} The last claim in Theorem~\ref{thm:wyser-yong-consistency}, that $\partial_i \WYS_\gamma = 0$ if $\gamma \ast s_i \not< \gamma$, is not stated explicitly in \cite{wyser-yong-clans}, but it follows from the geometry. Indeed, the geometric interpretation of weak order mentioned in Remark~\ref{rem:geom-weak-order} is essentially that if $\partial_i [\overline{Y}_\gamma]$ is nonzero, then it equals some $[\overline{Y}_{\gamma'}]$ and then one takes $\gamma' < \gamma$ to be a covering in weak order labeled by $s_i$.
\end{rem}

\begin{lem} $\clS_\gamma = \WYS_\gamma$ for any clan $\gamma \in \Clan_{p,q}$. \end{lem}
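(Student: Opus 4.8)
The plan is to prove the identity by downward induction on clan weak order, exploiting the already-noted formula $\clS_\gamma = \sum_{w\in\A(\gamma)}\S_w$ and the fact (Theorem~\ref{thm:wyser-yong-consistency}) that $\WYS_\bullet$ is the unique family satisfying the two clauses of Definition~\ref{defn:wyser-yong-schubert}. It therefore suffices to check that $\sum_{w\in\A(\gamma)}\S_w$ satisfies those clauses, namely: (a) for matchless $\gamma$, $\sum_{w\in\A(\gamma)}\S_w = s_{\lambda^+(\gamma)}(X_{\phi^+(\gamma)})\,s_{\lambda^-(\gamma)}(X_{\phi^-(\gamma)})$; and (b) $\partial_i\bigl(\sum_{w\in\A(\gamma)}\S_w\bigr) = \sum_{v\in\A(\gamma\ast s_i)}\S_v$ whenever $\gamma\ast s_i<\gamma$. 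Granting (a) and (b), the induction is immediate: the matchless clans are the maximal elements of weak order and every clan lies below one, so a non-matchless $\gamma$ has the form $\gamma = \gamma''\ast s_i$ for some covering $\gamma\lessdot\gamma''$, and then $\WYS_\gamma = \partial_i\WYS_{\gamma''} = \partial_i\clS_{\gamma''} = \clS_\gamma$ by Theorem~\ref{thm:wyser-yong-consistency}, the inductive hypothesis, and (b).

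I would establish (b) first, as it is essentially formal. The key input is the reduced-word identity
\[\clRed(\gamma\ast s_i) = \{\,a : ai\in\clRed(\gamma)\,\}\qquad\text{when }\gamma\ast s_i<\gamma,\]
where $ai$ denotes the word $a$ with the letter $i$ appended. This follows by unwinding the definition of reduced words: the covering $\gamma'\lessdot\gamma$ carrying edge label $s_i$ must be $\gamma\ast s_i\lessdot\gamma$, so a saturated chain from $\gamma_{p,q}$ to $\gamma$ whose final edge is labelled $s_i$ is exactly a saturated chain from $\gamma_{p,q}$ to $\gamma\ast s_i$ followed by that covering. Since $\clRed(\gamma) = \bigsqcup_{w\in\A(\gamma)}\Red(w)$ and $\{a : ai\in\Red(w)\}$ equals $\Red(ws_i)$ if $ws_i<w$ and is empty otherwise, this gives $\clRed(\gamma\ast s_i) = \bigsqcup_{w\in\A(\gamma),\,ws_i<w}\Red(ws_i)$; comparing with $\clRed(\gamma\ast s_i) = \bigsqcup_{v\in\A(\gamma\ast s_i)}\Red(v)$ and using that distinct permutations have disjoint reduced-word sets forces $\A(\gamma\ast s_i) = \{ws_i : w\in\A(\gamma),\ ws_i<w\}$. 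Clause (b) now follows from linearity of $\partial_i$ together with the standard facts that $\partial_i\S_w = \S_{ws_i}$ if $ws_i<w$ and $\partial_i\S_w = 0$ otherwise. (The same computation also shows $\partial_i\clS_\gamma = 0$ when $\gamma\ast s_i$ is undefined or is not below $\gamma$, which recovers the final assertion of Theorem~\ref{thm:wyser-yong-consistency}, but this is not needed for the induction.)

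The substantive part, and the main obstacle, is clause (a). For matchless $\gamma$, Algorithm~\ref{alg:atoms} together with Theorem~\ref{thm:cjw} gives a concrete description of $\A(\gamma)$: every valid pair selected by the algorithm is a pair of adjacent opposite-sign fixed points, so an atom is recorded by a sequence of such selections; the pair deleted at step $s$ receives the values $\{s{+}1,\,n{-}s\}$ with the larger value on the left, and the unused values are placed in the remaining positions in increasing order. One then has to show $\sum_{w\in\A(\gamma)}\S_w$ equals the product of flagged Schur polynomials appearing in (a). I would attempt this either by a weight-preserving bijection between pairs $(a,b)$ with $a\in\clRed(\gamma)$ and $b\in\comp(a)$ on one side and pairs of flagged semistandard tableaux of shapes $\lambda^+(\gamma),\lambda^-(\gamma)$ with row flags $\phi^+(\gamma),\phi^-(\gamma)$ on the other; or by an auxiliary induction on $p+q$, peeling a boundary fixed point of $\gamma$ — for instance a $+$ in the last position, which leaves $\lambda^+(\gamma)$ and its flag unchanged (the empty bottom row being irrelevant) and deletes a leftmost column from $\lambda^-(\gamma)$ — and checking that the atom side simplifies compatibly. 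The delicate point is to verify that the interleaving of $+$-choices and $-$-choices produced by the algorithm decouples cleanly into the two independent semistandard-tableau conditions, so that the sum of Schubert polynomials indeed factors as a product of two flagged Schur polynomials.
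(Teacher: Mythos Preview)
Your treatment of clause (b) is exactly what the paper does: the same reduced-word identity, the same description of $\A(\gamma\ast s_i)$, and the same appeal to the divided-difference recurrence for ordinary Schubert polynomials.

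The genuine gap is clause (a). You acknowledge that it is ``the substantive part, and the main obstacle'' and only sketch two possible attacks; neither is carried out, and neither is obviously routine. Proving directly that $\sum_{w\in\A(\gamma)}\S_w$ factors as the product of two flagged Schur polynomials for matchless $\gamma$ is essentially the content of the lemma restricted to the maximal elements, and would require real combinatorial work.

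The paper sidesteps (a) entirely by inducting in the \emph{opposite} direction. It starts at the minimal element $\gamma_{p,q}$, where $\clS_{\gamma_{p,q}} = \WYS_{\gamma_{p,q}} = 1$, and inducts upward. Using your clause (b) together with the last statement of Theorem~\ref{thm:wyser-yong-consistency} (that $\partial_i\WYS_\gamma = 0$ when $\gamma\ast s_i \not< \gamma$), one gets $\partial_i(\clS_\gamma - \WYS_\gamma) = 0$ for \emph{every} $i<n$, so the difference is symmetric in $x_1,\ldots,x_n$. Both $\clS_\gamma$ and $\WYS_\gamma$ are $\ZZ$-linear combinations of Schubert polynomials $\S_w$ with $w\in S_n$ (for $\WYS_\gamma$ this is cited from \cite{wyser-yong-clans}), and those Schubert polynomials are linearly independent modulo the ideal of symmetric polynomials, forcing the difference to vanish. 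No direct verification of the matchless formula is needed. Note that this argument genuinely uses the $\partial_i\clS_\gamma = 0$ case that you set aside as ``not needed for the induction''; in the paper's approach it is essential.
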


\begin{proof}    We claim that $\clS_\gamma$ satisfies the same recurrence as $\WYS_{\gamma}$, namely, if $1 \leq i < n$, then 
    \begin{equation} \label{eq:clan-schubert-recurrence}
        \partial_i \clS_\gamma = \begin{cases}
            \S_{\gamma \ast s_i} & \text{if $\gamma \ast s_i < \gamma$}\\
            0 & \text{otherwise}
        \end{cases}.
    \end{equation}
    Given that $\clS_\gamma = \sum_{w \in \A(\gamma)} \S_w$ and that ordinary Schubert polynomials satisfy the recurrence
    \begin{equation*}
        \partial_i \S_w = \begin{cases}
            \S_{ws_i} & \text{if $\ell(ws_i) < \ell(w)$}\\
            0 & \text{otherwise}
        \end{cases},
    \end{equation*}
    this claim follows from two simple facts about atoms:
    \begin{enumerate}[(i)]
        \item If $\gamma \ast s_i < \gamma$, then $\A(\gamma \ast s_i) = \{ws_i : w \in \A(\gamma) \text{ and } \ell(ws_i) < \ell(w)\}$.
        \item If $\gamma \ast s_i \not< \gamma$, then $\ell(ws_i) > \ell(w)$ for all $w \in \A(\gamma)$.
    \end{enumerate}
    If $w \in \A(\gamma)$ and $\ell(ws_i) < \ell(w)$, then $w$ has a reduced word ending in $\bf i$, so $\gamma \ast s_i < \gamma$; this proves (ii). As for (i), if $\gamma \ast s_i < \gamma$ then $\Red(\gamma \ast s_i) = \{a_1 \cdots a_\ell : a_1 \cdots a_\ell {\bf i} \in \Red(\gamma)\}$, which is equivalent to (i).

    Now we show that $\clS_\gamma = \WYS_\gamma$ for all $\gamma$ by induction on the rank of $\gamma$ in weak order. The base case is $\clS_{\gamma_{p,q}} = \WYS_{\gamma_{p,q}} = 1$ (the second equality is clear from the geometry, if not from Definition~\ref{defn:wyser-yong-schubert}). Equation~\eqref{eq:clan-schubert-recurrence} and Theorem~\ref{thm:wyser-yong-consistency} show that for any $i < n$,
    \begin{equation*}
    \partial_i(\clS_\gamma - \WYS_\gamma) = 0,
    \end{equation*}
    using that $\clS_{\gamma \ast s_i} = \WYS_{\gamma \ast s_i}$ by induction. The kernel of $\partial_i$ on $\ZZ[x_1, \ldots, x_n]$ consists of those polynomials symmetric in $x_i$ and $x_{i+1}$, so this shows $\clS_\gamma - \WYS_\gamma$ is symmetric in $x_1, \ldots, x_n$. By \cite[Proposition 2.9]{wyser-yong-clans}, $\WYS_\gamma$ is a linear combination of Schubert polynomials $\S_w$ for $w \in S_n$, so the same is true of $\clS_\gamma - \WYS_\gamma$. But it is well-known that the Schubert polynomials $\S_w$ for $w \in S_n$ are linearly independent modulo the ideal in $\ZZ[x_1, \ldots, x_n]$ generated by symmetric polynomials \cite[\sectionsymbol 2.5.2]{manivel}, so $\clS_\gamma - \WYS_\gamma = 0$. 
\end{proof}

Our next goal is to leverage the formulas of Wyser and Yong to prove enumerative results for clan words via the Stanley symmetric functions $F_{\gamma}$. To do this, we must better understand the procedure of passing from $\clS_\gamma$ to $F_{\gamma}$. An important fact about the divided difference operators $\partial_i$ is that they satisfy the braid relations for $S_n$: that is, $\partial_i \partial_k = \partial_k \partial_i$ if $|i-k| > 1$ and $\partial_i \partial_j \partial_i = \partial_j \partial_i \partial_j$ if $|i-j| = 1$. As a consequence, we can define $\partial_w$ as the composition $\partial_{a_1} \cdots \partial_{a_\ell}$ for a reduced word $a \in \Red(w)$, and the resulting operator is independent of the choice of $a$. The same holds for the $\pi_i$.

\begin{lem}[\cite{HMP1}, Theorem 3.40; \cite{M2}, equation (4.25)] \label{lem:stabilization} Let $w_n = n(n-1)\cdots 21 \in S_n$. If $f \in \ZZ[x_1, \ldots, x_n]$ and $N \geq n$, then $\pi_{w_N} f$ is a symmetric polynomial in $x_1, \ldots, x_N$. Moreover, $\lim_{N \to \infty} \pi_{w_N} \S_w = F_w$ for any permutation $w$. \end{lem}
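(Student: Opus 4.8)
The plan splits along the two assertions of the lemma.

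\emph{Symmetry of $\pi_{w_N}f$.} I would use two elementary facts about the $\pi_i$. First, $\pi_i$ is idempotent, and since the $\pi_i$ satisfy the braid relations, $\pi_u\pi_v = \pi_{uv}$ whenever $\ell(u)+\ell(v)=\ell(uv)$. Second, from $\pi_i(f)=\partial_i(x_i f)=f+x_{i+1}\partial_i f$ one reads off that $\pi_i f=f$ if and only if $\partial_i f=0$, i.e.\ $f$ is symmetric in $x_i$ and $x_{i+1}$. Now every simple reflection $s_i$ with $i<N$ is both a left and a right descent of the longest element $w_N\in S_N$, so $w_N=s_i\cdot(s_i w_N)$ with additive lengths and hence $\pi_i\pi_{w_N}=\pi_i^2\pi_{s_i w_N}=\pi_i\pi_{s_i w_N}=\pi_{w_N}$. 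Thus $\pi_{w_N}f$ is fixed by every $\pi_i$ with $i<N$, so it is symmetric in each consecutive pair and therefore in all of $x_1,\dots,x_N$; it lies in $\ZZ[x_1,\dots,x_N]$ and has the same top degree as $f$, since each $\pi_i$ preserves $\ZZ[x_1,\dots,x_N]$ and degree.

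\emph{The limit.} For $\lim_N\pi_{w_N}\S_w=F_w$ I would first pin down $\pi_{w_N}$ on monomials. Using $\pi_{w_N}\pi_i=\pi_{w_N}$, one may apply the $\pi_i$ to $x^\mu$ in any order before applying $\pi_{w_N}$; the explicit single-variable rule for $\pi_i$ on $x_i^a x_{i+1}^b$ (a signed sum over the exponent pairs lying strictly between $a$ and $b$, equal to $0$ when $|a-b|=1$) then straightens $x^\mu$ into a signed sum of dominant monomials $x^\lambda$ according to exactly the Jacobi--Trudi straightening, while $\pi_{w_N}(x^\lambda)=s_\lambda(x_1,\dots,x_N)$ for a partition $\lambda$ of length $\le N$ (the Demazure character at a dominant weight). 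Hence $\lim_{N\to\infty}\pi_{w_N}(x^\mu)=\det(h_{\mu_i-i+j})_{i,j\ge1}$, the Jacobi--Trudi determinant; call this $\phi(x^\mu)$ and extend $\phi$ linearly, so that $\phi$ is a degree-preserving map from polynomials to symmetric functions with $\lim_N\pi_{w_N}\S_w=\phi(\S_w)$. Note also that $\phi$ restricts to the identity on symmetric functions, since straightening $s_\lambda=\sum_\nu K_{\lambda\nu}x^\nu$ collapses back to $s_\lambda$ (the non-partition terms cancelling or vanishing).

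\emph{Identification with $F_w$.} It remains to prove $\phi(\S_w)=F_w$, and the crux is the shift-invariance $\phi(\S_{w^{+1}})=\phi(\S_w)$. Granting this, $\phi(\S_w)=\phi(\S_{w^{+m}})$ for all $m$; the $\S_{w^{+m}}$ have bounded degree and converge to $F_w$, so letting $m\to\infty$ and using that $\phi$ is the identity on symmetric functions gives $\phi(\S_w)=\phi(F_w)=F_w$. The shift-invariance is, I expect, the main obstacle. By Billey--Jockusch--Stanley \cite{billeyjockuschstanley}, $\S_{w^{+m}}=\sum_{a\in\Red(w)}\sum_b x^b$ with $b$ running over weakly increasing sequences bounded by $a_i+m$ with strict increases forced at the ascents of $a$, and deleting the bound yields $F_w$, whose Schur-positive expansion is given by Edelman--Greene insertion; proving that $\phi(\S_w)$ equals this positive expansion amounts to showing that all the signs introduced by Jacobi--Trudi straightening cancel. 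This is precisely the content of \cite[Theorem 3.40]{HMP1} and \cite[(4.25)]{M2}; the rest is routine Demazure-operator bookkeeping.
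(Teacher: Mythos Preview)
The paper does not prove this lemma at all: it is stated with citations to \cite{HMP1} and \cite{M2} and then used without further argument. So there is no ``paper's own proof'' to compare against; what follows is an assessment of your proposal on its own terms.

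Your argument for the symmetry of $\pi_{w_N}f$ is correct and is the standard one: $\pi_i\pi_{w_N}=\pi_{w_N}$ because $s_i$ is a left descent of $w_N$, and $\pi_i g=g$ characterizes symmetry in $x_i,x_{i+1}$.

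For the limit, your outline is reasonable but not a self-contained proof. Two comments. First, the step ``letting $m\to\infty$'' in $\phi(\S_w)=\phi(\S_{w^{+m}})\to\phi(F_w)$ is delicate: $\phi$ is defined on polynomials, while $F_w$ is a formal power series in infinitely many variables, so you must justify passing to the limit (e.g.\ by noting that everything is homogeneous of fixed degree $\ell(w)$, the target space is finite-dimensional, and the coefficient of each $s_\lambda$ in $\phi(\S_{w^{+m}})$ stabilizes once $m$ is large). Second, and more importantly, you explicitly defer the crux---the shift-invariance $\phi(\S_{w^{+1}})=\phi(\S_w)$, equivalently the cancellation of signs in Jacobi--Trudi straightening---back to the very references \cite{HMP1} and \cite{M2} that the lemma is citing. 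That is honest, but it means your proposal is an explanation of the structure of the result rather than an independent proof. If you want a genuinely self-contained argument, that step is where the work lies.
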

It follows by linearity that $\pi_{w_{n}} \clS_\gamma = F_\gamma$ for $\gamma \in \Clan_{p,q}$. The result we are working towards is that, if $\gamma$ is matchless, then $F_\gamma = s_{\lambda^+(\gamma)}s_{\lambda^-(\gamma)}$. While it is true that the two Schur functions here are the images under $\lim_{N \to \infty} \pi_{w_N}$ of the two factors in Definition~\ref{defn:wyser-yong-schubert}, in general $\pi_{w_N}$ is not a ring homomorphism, so we must work a little harder.

\begin{lem}[\cite{M2}, equation (3.10)] \label{lem:isobaric} Suppose $k$ is such that $\phi_k \neq \phi_{k'}$ for all $k' \neq k$. Then
    \begin{equation*}
        \pi_{\phi_k} s_{\lambda}(X_{\phi_1}, \ldots, X_{\phi_k}, \ldots, X_{\phi_{\ell}}) = s_{\lambda}(X_{\phi_1}, \ldots, X_{\phi_k+1}, \ldots, X_{\phi_{\ell}}).
    \end{equation*}
    If $i \notin \{\phi_1, \ldots \phi_\ell\}$, then $\pi_i s_{\lambda}(X_\phi) = s_{\lambda}(X_\phi)$.
\end{lem}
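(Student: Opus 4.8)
The plan is to reduce the statement to the flagged Jacobi--Trudi determinant together with one computation for complete homogeneous symmetric polynomials. Write $\phi = (\phi_1, \ldots, \phi_\ell)$ for the flag, which we may take to be weakly increasing as it is in all our applications, and recall the flagged Jacobi--Trudi identity
\[
s_\lambda(X_{\phi_1}, \ldots, X_{\phi_\ell}) = \det\bigl(h_{\lambda_p - p + q}(X_{\phi_p})\bigr)_{1 \le p, q \le \ell},
\]
where $h_m = 0$ for $m < 0$. Put $i := \phi_k$. The hypothesis $\phi_k \neq \phi_{k'}$ for $k' \neq k$ says that every row $p \neq k$ is built from a variable set $X_{\phi_p}$ with $\phi_p \le i-1$ or $\phi_p \ge i+1$; in either case each entry of that row is symmetric under the transposition $s_i$ swapping $x_i$ and $x_{i+1}$ (it either omits both variables, or contains both). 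The same observation settles the second assertion at once: if $i \notin \{\phi_1, \ldots, \phi_\ell\}$ then \emph{every} row, hence $s_\lambda(X_\phi)$ itself, is $s_i$-symmetric, and $\pi_i f = \partial_i(x_i f) = (\partial_i x_i) f + (s_i x_i)(\partial_i f) = f$ whenever $s_i f = f$.

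Next I would establish the one-row identity $\pi_i\, h_m(x_1, \ldots, x_i) = h_m(x_1, \ldots, x_{i+1})$, which is the computational heart (and is just the $\ell = 1$ case of the lemma). This is cleanest through generating functions: set $H_j(t) = \sum_{m \ge 0} h_m(x_1, \ldots, x_j)\,t^m = \prod_{r=1}^j (1 - x_r t)^{-1}$ and factor $H_i(t) = H_{i-1}(t)\cdot(1-x_it)^{-1}$, where $H_{i-1}(t)$ involves neither $x_i$ nor $x_{i+1}$ and is therefore $s_i$-symmetric. A direct computation gives $\partial_i\bigl(\frac{x_i}{1-x_it}\bigr) = \bigl((1-x_it)(1-x_{i+1}t)\bigr)^{-1}$, so pulling the $s_i$-symmetric factor through $\partial_i$ yields $\pi_i H_i(t) = H_{i-1}(t)\,\partial_i\bigl(\frac{x_i}{1-x_it}\bigr) = H_{i+1}(t)$; comparing coefficients of $t^m$ gives the claim.

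Finally I would assemble these using the Leibniz rule $\partial_i(fg) = (\partial_i f)g + (s_i f)(\partial_i g)$, which specializes to $\pi_i(fg) = f\,\pi_i(g)$ whenever $s_i f = f$. Expanding the Jacobi--Trudi determinant along row $k$, the cofactor $M_{kq}$ of each entry is a polynomial in the entries of the other rows and hence $s_i$-symmetric by the first paragraph, so
\[
\pi_i \det\bigl(h_{\lambda_p - p + q}(X_{\phi_p})\bigr) = \sum_q (-1)^{k+q} M_{kq}\,\pi_i h_{\lambda_k - k + q}(X_{\phi_k}) = \sum_q (-1)^{k+q} M_{kq}\, h_{\lambda_k - k + q}(X_{\phi_k+1}).
\]
The right-hand side is exactly the row-$k$ cofactor expansion of the Jacobi--Trudi determinant for the flag $(\phi_1, \ldots, \phi_k+1, \ldots, \phi_\ell)$; since $\phi_k \neq \phi_{k\pm 1}$ forces that new sequence to remain weakly increasing, this determinant equals $s_\lambda(X_{\phi_1}, \ldots, X_{\phi_k+1}, \ldots, X_{\phi_\ell})$, as desired.

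The main obstacle is conceptual rather than technical: $\pi_i$ is not a ring homomorphism, so one cannot distribute it over the determinant directly, and the whole argument turns on the fact that under the hypothesis exactly one row fails to be $s_i$-symmetric, which is precisely what lets the row-$k$ expansion localize the action of $\pi_i$. It is worth noting that the hypothesis $\phi_k \neq \phi_{k'}$ for all $k' \neq k$ gets used twice: once to see that the remaining rows (and hence the cofactors) are $s_i$-symmetric, and once to guarantee that incrementing $\phi_k$ keeps the flag weakly increasing, so that flagged Jacobi--Trudi applies to the output.
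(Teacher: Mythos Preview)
Your proof is correct and follows essentially the same approach as the paper's: both reduce to the one-row case $\pi_i h_m(X_i) = h_m(X_{i+1})$ via the generating function $\prod_r (1 - x_r t)^{-1}$, then use the flagged Jacobi--Trudi determinant together with the twisted Leibniz rule to localize $\pi_i$ to the unique non-$s_i$-symmetric part. The only cosmetic difference is that you expand along row $k$ via cofactors while the paper expands the determinant fully into products $\pm h_{d_1}(X_{\phi_1})\cdots h_{d_\ell}(X_{\phi_\ell})$; you are also a bit more careful than the paper in noting that the flag must stay weakly increasing for the Jacobi--Trudi identity to apply.
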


\begin{proof}
    Let us first verify this when $\lambda = (d)$ has length $1$, so $s_{\lambda}(X_r)$ is the homogeneous symmetric polynomial $h_d(X_r) = h_d(x_1, \ldots, x_r)$, and we must see that $\pi_r h_d(X_r) = h_d(X_{r+1})$. This is easy using the generating function
    \begin{equation*}
        \prod_{i=1}^r \frac{1}{1 - x_i t} = \sum_{d=0}^{\infty} h_d(X_r) t^i.
    \end{equation*}
    The first $r-1$ factors on the left are symmetric in $x_r$ and $x_{r+1}$, so commute with $\pi_r$, so one only needs to verify by direct computation that $\pi_r (1-x_r t)^{-1} = (1 - x_r t)^{-1} (1 - x_{r+1} t)^{-1}$.

    For general $\lambda$, we use the Jacobi-Trudi identity for flagged Schur functions \cite{wachs-schubert}:
    \begin{equation*}
        s_{\lambda}(X_{\phi}) = \det\,(h_{\lambda_i-i+j}(X_{\phi_i}))_{1 \leq i,j \leq \ell(\lambda)}.
    \end{equation*}
    This determinant expands as a sum of terms of the form
    \begin{equation} \label{eq:det-term}
        \pm h_{d_1}(X_{\phi_1}) \cdots h_{d_\ell}(X_{\phi_\ell}).
    \end{equation}
    If $i \neq r$, then $h_d(X_r)$ is symmetric in $x_i$ and $x_{i+1}$. In particular, the hypothesis $\phi_{k'} \neq \phi_{k}$ for $k' \neq k$ ensures that every factor in the term \eqref{eq:det-term} is symmetric in $x_{\phi_k}$ and $x_{\phi_k+1}$ except for $h_{d_k}(X_{\phi_k})$. The effect of applying $\pi_{\phi_k}$ to the term \eqref{eq:det-term} is therefore the same as the effect of applying it only to the factor $h_{d_k}(X_{\phi_k})$, and the previous paragraph shows that this is the same as replacing $\phi_k$ by $\phi_{k}+1$. This argument also shows that if $i \notin \{\phi_1, \ldots, \phi_\ell\}$, then $s_{\lambda}(X_\phi)$ is symmetric in $x_i$ and $x_{i+1}$, hence fixed by $\pi_i$.
\end{proof}

\begin{thm} \label{thm:matchless-clan-stanley} $F_\gamma = s_{\lambda^+(\gamma)} s_{\lambda^-(\gamma)}$ for a matchless clan $\gamma$. \end{thm}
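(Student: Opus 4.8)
By the preceding lemma and Definition~\ref{defn:wyser-yong-schubert}, for matchless $\gamma$ we have $\clS_\gamma = \WYS_\gamma = s_{\lambda^+(\gamma)}(X_{\phi^+(\gamma)})\, s_{\lambda^-(\gamma)}(X_{\phi^-(\gamma)})$, and combining Lemma~\ref{lem:stabilization} with $\clS_\gamma = \sum_{w\in\A(\gamma)}\S_w$ and Proposition~\ref{prop:stable-limit} gives $F_\gamma = \lim_{N\to\infty}\pi_{w_N}(\clS_\gamma)$. The plan is to expand each of the two flagged Schur polynomials by the flagged Jacobi--Trudi identity $s_\lambda(X_\phi) = \det(h_{\lambda_i-i+j}(X_{\phi_i}))$, apply the stabilization operator $\lim_N\pi_{w_N}$ term by term, and reassemble by ordinary Jacobi--Trudi. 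The single place where matchlessness enters is that then $\phi^+(\gamma)$ and $\phi^-(\gamma)$ partition $[n]$; consequently each product obtained by expanding the two determinants, namely $\prod_i h_{c_i}(X_{\phi^+(\gamma)_i})\cdot\prod_k h_{d_k}(X_{\phi^-(\gamma)_k})$, uses the alphabets $X_1,X_2,\ldots,X_n$ each exactly once, hence has the form $\prod_{r=1}^n h_{e_r}(X_r)$ for suitable nonnegative integers $e_r$.

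The heart of the matter is thus the identity $\lim_{N\to\infty}\pi_{w_N}\bigl(\prod_{r=1}^n h_{e_r}(X_r)\bigr) = \prod_{r=1}^n h_{e_r}$, the right side being a product of complete homogeneous symmetric functions. I would first prove $\pi_{w_n}\bigl(\prod_{r=1}^n h_{e_r}(X_r)\bigr) = \prod_{r=1}^n h_{e_r}(X_n)$ by induction on $n$: the factor $h_{e_n}(X_n)$ is symmetric in $x_1,\ldots,x_n$, so commutes with $\pi_1,\ldots,\pi_{n-1}$ and hence with $\pi_{w_n}$; then using the parabolic factorization $w_n = (s_1 s_2\cdots s_{n-1})\cdot w_{n-1}$ with lengths adding, the fact that $\pi_{w_{n-1}}$ fixes polynomials symmetric in $x_1,\ldots,x_{n-1}$, the inductive hypothesis, the identity $\pi_r(h_e(X_r)) = h_e(X_{r+1})$ from the proof of Lemma~\ref{lem:isobaric}, and the identity $\pi_{s_1\cdots s_{n-1}}s_\mu(X_{n-1}) = s_\mu(X_n)$ (which follows from $s_\mu(X_{n-1}) = \pi_{w_{n-1}}(x^\mu)$ together with $\pi_{w_n}\pi_{w_{n-1}} = \pi_{w_n}$), one reduces $\pi_{w_n}\bigl(\prod_{r=1}^n h_{e_r}(X_r)\bigr)$ to $h_{e_n}(X_n)\cdot\prod_{r=1}^{n-1}h_{e_r}(X_n)$; here one uses that a product of at most $n-1$ complete homogeneous symmetric functions has only Schur components with fewer than $n$ rows, so passing from $n-1$ to $n$ variables loses nothing. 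Then the analogous factorization $w_N = (w_N w_n)\cdot w_n$ in $S_N$, the fact that $\pi_{w_n}$ fixes $\prod_{r=1}^n h_{e_r}(X_n)$, and the classical identity $\pi_{w_N}s_\mu(X_n) = s_\mu(X_N)$ yield $\pi_{w_N}\bigl(\prod_{r=1}^n h_{e_r}(X_r)\bigr) = \prod_{r=1}^n h_{e_r}(X_N)$, whose limit as $N\to\infty$ is $\prod_{r=1}^n h_{e_r}$.

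Granting this identity, applying $\lim_N\pi_{w_N}$ term by term to the expansion of $\clS_\gamma$ replaces each $\prod_{r=1}^n h_{e_r}(X_r)$ by $\prod_{r=1}^n h_{e_r}$; since complete homogeneous symmetric functions commute, the signed sum collapses back to $\det(h_{\lambda^+(\gamma)_i-i+j})_{1\le i,j\le p}\cdot\det(h_{\lambda^-(\gamma)_i-i+j})_{1\le i,j\le q}$, which is $s_{\lambda^+(\gamma)}\,s_{\lambda^-(\gamma)}$ by ordinary Jacobi--Trudi. I expect the main obstacle to be exactly the identity of the second paragraph: $\pi_{w_N}$ is not a ring homomorphism, so it cannot be distributed directly across the product $s_{\lambda^+(\gamma)}(X_{\phi^+(\gamma)})\,s_{\lambda^-(\gamma)}(X_{\phi^-(\gamma)})$, and the reduction to products $\prod_r h_{e_r}(X_r)$ with \emph{pairwise distinct} flag alphabets (which is where matchlessness is used) is precisely the maneuver that sidesteps this; the inductive evaluation of $\pi_{w_n}(\prod_r h_{e_r}(X_r))$ through the parabolic factorizations is the technical core.
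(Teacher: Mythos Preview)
Your proposal is correct, but the paper's argument is organized differently. Both proofs start from $F_\gamma=\lim_{N\to\infty}\pi_{w_N}\bigl(s_{\lambda^+}(X_{\phi^+})\,s_{\lambda^-}(X_{\phi^-})\bigr)$ and both use matchlessness through the single fact that $\phi^+$ and $\phi^-$ partition $[n]$. The paper, however, never expands by Jacobi--Trudi: it chooses the specific reduced word $a^{N-1}\cdots a^2 a^1$ for $w_N$ with $a^i=i(i{+}1)\cdots(N{-}1)$, and uses Lemma~\ref{lem:isobaric} to show that applying each $\pi_i$ simply increments the unique flag entry equal to $i$ (in one of the two factors, the other factor being symmetric in $x_i,x_{i+1}$), so that after $\pi_{w_N}$ every flag entry has been pushed up to $N$. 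This keeps the two flagged Schur polynomials intact at every stage and avoids any auxiliary identity. Your route---unpack via flagged Jacobi--Trudi into products $\prod_{r=1}^n h_{e_r}(X_r)$, prove $\pi_{w_N}\prod_r h_{e_r}(X_r)=\prod_r h_{e_r}(X_N)$ by an inductive parabolic factorization, then repack---is more general in that it yields a standalone identity about $\pi_{w_N}$ on such products, at the cost of the somewhat heavier inductive machinery in your second paragraph. (A minor remark: the identity $\pi_r(h_e(X_r))=h_e(X_{r+1})$ you cite is not actually needed in your induction as written; the step from $\prod_r h_{e_r}(X_{n-1})$ to $\prod_r h_{e_r}(X_n)$ goes through the $\pi_{s_1\cdots s_{n-1}}s_\mu(X_{n-1})=s_\mu(X_n)$ identity alone.)
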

\begin{proof} Abbreviate $\lambda^\pm(\gamma)$ and $\phi^\pm(\gamma)$ as $\lambda^\pm$ and $\phi^\pm$. By Lemma~\ref{lem:stabilization} and the formulas of Definition~\ref{defn:wyser-yong-schubert},
    \begin{equation*}
        F_\gamma = \lim_{N\to \infty} \pi_{w_N}(s_{\lambda^+}(X_{\phi^+})\, s_{\lambda^-}(X_{\phi^-})).
    \end{equation*}
    Fix $N \geq n = p+q$. Let $a^i$ be the word $\bf i(i{+}1)\cdots (N{-}1)$ for $i < N$. It is not hard to check that $a^{N-1} \cdots a^2 a^1$ is a reduced word for $w_N$, and we will take $\pi_{w_N}$ to be the specific composition $\pi_{a^{N-1}} \cdots \pi_{a^1}$. Let $f = s_{\lambda^+}(X_{\phi^+})\, s_{\lambda^-}(X_{\phi^-})$.
    
    First consider $\pi_{N-1}(f)$. If $N-1 > n$, then $f$ is symmetric in $x_{N-1}$ and $x_N$ (since these variables do not even appear), so $f$ is fixed by $\pi_{N-1}$. Otherwise, $N-1$ appears in exactly one of $\phi^-$ and $\phi^+$; say $(\phi^-)_k = N-1$. The sequences $\phi^+$ and $\phi^-$ are disjoint and have no repeated entries, so it follows from Lemma~\ref{lem:isobaric} that
    \begin{equation*}
        \pi_{N-1}(f) = s_{\lambda^+}(X_{\phi^+}) \pi_{N-1}(s_{\lambda^-}(X_{\phi^-})) = s_{\lambda^+}(X_{\phi^+}) s_{\lambda^-}(X_{\phi^-_1}, \ldots, X_{\phi^-_k + 1}, \ldots, X_{\phi^-_q}); 
    \end{equation*}
    in words, $\pi_{n-1}(f)$ is obtained from $f$ by incrementing the entry $N-1$ of $\phi^-$ to $N$. This does not alter any entries of $\phi^\pm$ which are less than $N-1$, and so the same argument shows that subsequently applying $\pi_{N-2}, \pi_{N-3}, \ldots, \pi_1$ (in that order) has the effect of incrementing every value in $\phi^-$ and $\phi^+$ which is less than $N$. That is, $\pi_{a^1}(f) = s_{\lambda^+}(X_{\uparrow \phi^+}) s_{\lambda^-}(X_{\uparrow \phi^-})$ where for a sequence $\phi$ we define $\uparrow\!\!\phi$ as the sequence with
    \begin{equation*}
        (\uparrow\!\!\phi)_i = \begin{cases}
            \phi_i + 1 & \text{if $\phi_i < N$}\\
            \phi_i & \text{if $\phi_i \geq N$}.
        \end{cases}
   \end{equation*}

   Similarly, consider the action of $\pi_{a^2}$. Ignoring entries equal to $N$, the flags $\uparrow\!\!\phi^+$ and $\uparrow\!\!\phi^-$ are still disjoint with no repeated entries, and so the argument of the last paragraph shows that
   \begin{equation*}
    \pi_{a^2}(\pi_{a^1}f) = \pi_{a^2}(s_{\lambda^+}(X_{\uparrow \phi^+}) s_{\lambda^-}(X_{\uparrow \phi^-})) = s_{\lambda^+}(X_{\uparrow \uparrow \phi^+}) s_{\lambda^-}(X_{\uparrow \uparrow \phi^-}).
   \end{equation*}
   Continuing in this way, we see that
   \begin{equation*}
    \pi_{w_N} f = s_{\lambda^+}(X_{\uparrow^{N-1} \phi^+}) s_{\lambda^-}(X_{\uparrow^{N-1} \phi^-})) = s_{\lambda^+}(X_N, \ldots, X_N) s_{\lambda^-}(X_N, \ldots, X_N).
   \end{equation*}
   Since $\lambda^-$ and $\lambda^+$ have length at most $n \leq N$ by definition, $s_{\lambda^\pm}(X_N, \ldots, X_N)$ is simply the ordinary Schur polynomial $s_{\lambda^\pm}(x_1, \ldots, x_N)$. Thus, $\lim_{N \to \infty} \pi_{w_N} f = s_{\lambda^+} s_{\lambda^-}$.
        
\end{proof}  

\begin{cor} \label{cor:clan-word-count} Let $f^\lambda$ be the number of standard tableaux of shape $\lambda$. Then for a matchless clan $\gamma \in \Clan_{p,q}$,
    \begin{equation*}
        \#\Red(\gamma) = {|\lambda^+|+|\lambda^-| \choose |\lambda^+|,|\lambda^-|} f^{\lambda^+} f^{\lambda^-} = (pq)! \prod_{\substack{i \in \phi^+ \\ j \in \phi^-}} \frac{1}{|i-j|}. 
    \end{equation*}
\end{cor}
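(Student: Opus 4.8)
The plan is to read $\#\Red(\gamma)$ off the Schur-product formula of Theorem~\ref{thm:matchless-clan-stanley} and then convert the resulting count of standard tableaux into the displayed product via the hook length formula. Throughout abbreviate $\lambda^\pm = \lambda^\pm(\gamma)$ and $\phi^\pm = \phi^\pm(\gamma)$. First I would record that $|\lambda^+| + |\lambda^-| = pq$: the Young diagram of $\lambda^+$ lies in the $p\times q$ rectangle, and by Proposition~\ref{prop:symmetries} that of $\lambda^-$ is the complement of $(\lambda^+)^t$ in the $q\times p$ rectangle, so $|\lambda^-| = pq - |\lambda^+|$. In particular $F_\gamma = s_{\lambda^+}s_{\lambda^-}$ is homogeneous of degree $pq$, so by Proposition~\ref{prop:reduced-word-count} the number $\#\Red(\gamma)$ equals the coefficient of $x_1 x_2 \cdots x_{pq}$ in $s_{\lambda^+}s_{\lambda^-}$.

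For the first equality I would extract that coefficient directly from the tableau definition of a Schur polynomial. A monomial $x^T x^S$, with $T$ a semistandard tableau of shape $\lambda^+$ and $S$ one of shape $\lambda^-$, equals $x_1\cdots x_{pq}$ precisely when the entries of $T$ together with those of $S$ use each element of $[pq]$ exactly once; that is, when $T$ is a standard filling of $\lambda^+$ with some set $A \subseteq [pq]$ of size $|\lambda^+|$ and $S$ is a standard filling of $\lambda^-$ with $[pq]\setminus A$. Summing over the choices of $A$ gives $\binom{pq}{|\lambda^+|}\, f^{\lambda^+} f^{\lambda^-} = \binom{|\lambda^+|+|\lambda^-|}{|\lambda^+|,|\lambda^-|}\, f^{\lambda^+} f^{\lambda^-}$, the first claimed expression (equivalently, this is the classical identity $\sum_\lambda c^{\lambda}_{\lambda^+\lambda^-} f^\lambda = \binom{pq}{|\lambda^+|} f^{\lambda^+} f^{\lambda^-}$).

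For the second equality I would apply the hook length formula $f^\mu = |\mu|!\big/\prod_{c \in \mu} h(c)$ to each factor; since $|\lambda^+| + |\lambda^-| = pq$, the factorials cancel against the multinomial coefficient, leaving
\[
\binom{pq}{|\lambda^+|}\, f^{\lambda^+} f^{\lambda^-} = (pq)!\,\Big(\prod_{c\in\lambda^+} h(c)\Big)^{-1}\Big(\prod_{c\in\lambda^-} h(c)\Big)^{-1},
\]
so it remains to prove $\prod_{c\in\lambda^+} h(c)\, \prod_{c\in\lambda^-} h(c) = \prod_{i\in\phi^+,\, j\in\phi^-} |i-j|$. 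Here I would invoke the classical boundary-path description of hook lengths: for a partition $\mu$ inside a $p\times q$ rectangle, if one reads the southwest boundary of its Young diagram from top to bottom, labels the $p+q$ steps $1,\dots,p+q$, and writes $V$ for the set of labels of vertical steps and $H = [p+q]\setminus V$ for the horizontal ones, then the multiset of hook lengths of $\mu$ equals $\{\, j-i : i\in V,\ j\in H,\ i<j \,\}$. Applied to $\mu = \lambda^+$, for which $V = \phi^+$ and $H = \phi^-$ (this is exactly the lattice-path reading of $\gamma \mapsto \lambda^+(\gamma)$ recalled after its definition), this gives $\prod_{c\in\lambda^+} h(c) = \prod_{i\in\phi^+,\, j\in\phi^-,\, i<j}(j-i)$. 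Applied to the matchless $(q,p)$-clan $\neg(\gamma)$, whose $\lambda^+$ is $\lambda^-$ by Proposition~\ref{prop:symmetries} and which has $\phi^+(\neg(\gamma)) = \phi^-$ and $\phi^-(\neg(\gamma)) = \phi^+$, it gives $\prod_{c\in\lambda^-} h(c) = \prod_{i\in\phi^+,\, j\in\phi^-,\, j<i}(i-j)$. Multiplying the two and using that $\phi^+$ and $\phi^-$ are disjoint, so that each pair $(i,j)\in\phi^+\times\phi^-$ satisfies exactly one of $i<j$, $j<i$, yields the desired identity.

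The only ingredient that is not a one-line consequence of the setup is the boundary-path formula for hook lengths; it is classical (the abacus / beta-number calculus, as in Macdonald's book), and once it is available the rest is bookkeeping. The main point to be careful about is matching the reading conventions so that vertical boundary steps correspond to $\phi^+$ and the hook length of a cell comes out as ``(horizontal label) $-$ (vertical label)'', rather than the reverse; I would check this against the worked example of $\gamma = +--+-+++-$ before committing to the orientation.
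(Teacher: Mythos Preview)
Your proposal is correct and follows essentially the same route as the paper: extract the coefficient of $x_1\cdots x_{pq}$ from $s_{\lambda^+}s_{\lambda^-}$ via Proposition~\ref{prop:reduced-word-count} and Theorem~\ref{thm:matchless-clan-stanley}, then apply the hook length formula together with the identification of the hook lengths of $\lambda^\pm$ with the differences $|i-j|$ coming from the lattice-path boundary of the Young diagram. Your treatment is somewhat more explicit (you spell out $|\lambda^+|+|\lambda^-|=pq$ via Proposition~\ref{prop:symmetries} and handle $\lambda^-$ by passing to $\neg(\gamma)$), but there is no substantive difference in strategy.
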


\begin{proof} The first equality follows from Theorem~\ref{thm:matchless-clan-stanley} by comparing coefficients of $x_1 x_2 \cdots $, as per Proposition~\ref{prop:reduced-word-count}. For the second, apply the hook length formula. The hook lengths of $\lambda^+$ are exactly the distances from each $+$ in $\gamma$ to some following $-$. To be precise, the hook with corner $(i,j)$ in $\lambda^+$ has size $\phi^+_i - \phi^-_{q-j+1}$. This statement and the corresponding statement for $\lambda^-$ imply via the hook length formula that
    \begin{equation*}
        f^{\lambda^+} f^{\lambda^-} = |\lambda^+|!\prod_{\substack{i \in \phi^+ \\ j \in \phi^- \\ i < j}} \frac{1}{j-i}\,\, |\lambda^-|!\prod_{\substack{i \in \phi^+ \\ j \in \phi^- \\ i > j}} \frac{1}{i-j}.
    \end{equation*}
    Since $|\lambda^+| + |\lambda^-|$ is the total number of pairs of a $+$ and following $-$ or vice versa, i.e. $pq$, the second equality follows.
\end{proof}

\section{Maximizing the number of reduced words} 
\label{sec:max}
 
Any reduced word for a permutation in $S_n$ is a prefix of at least one reduced word for $w_n$, so $\#\Red(w)$ is maximized when $w = w_n$. For the same reason, the clan $\gamma \in \Clan_{p,q}$ with the most reduced words must be matchless, but it is not immediately clear which matchless clans maximize $\#\Red(\gamma)$. In the smallest case $q \geq p = 1$, Corollary~\ref{cor:clan-word-count} says that $\#\Red(\gamma) = {q \choose \phi_1^+ - 1}$, confirming the natural guess that $\#\Red(\gamma)$ is maximized when $\gamma$ has its single $+$ as close to the middle of its one-line notation as possible. To proceed further, it is helpful to rewrite the formula of Corollary~\ref{cor:clan-word-count}.

\begin{prop} \label{prop:reduced-word-function} For real numbers $1 \leq \phi_1 < \cdots < \phi_m \leq p+q = n$ let
    \begin{equation*}
        f(\phi_1, \ldots, \phi_m) = \prod_{1 \leq k < \ell \leq m} \frac{1}{(\phi_k - \phi_\ell)^2} \prod_{k=1}^m \Gamma(\phi_k)\Gamma(n+1-\phi_k).
    \end{equation*}
         If $\gamma \in \Clan_{p,q}$ is matchless, then $\#\Red(\gamma) = (pq)! / f(\phi^+(\gamma)) = (pq)! / f(\phi^-(\gamma))$.
\end{prop}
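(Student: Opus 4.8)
The plan is to derive the proposition directly from Corollary~\ref{cor:clan-word-count}, which already gives $\#\Red(\gamma) = (pq)! \prod_{i \in \phi^+,\, j \in \phi^-} |i-j|^{-1}$ for a matchless $(p,q)$-clan. So it suffices to prove the purely combinatorial identity $f(\phi^+(\gamma)) = f(\phi^-(\gamma)) = \prod_{i \in \phi^+,\, j \in \phi^-} |i-j|$, where throughout we abbreviate $\phi^\pm = \phi^\pm(\gamma)$ and recall that, since $\gamma$ is matchless, $\phi^+$ has $p$ entries, $\phi^-$ has $q$ entries, and $\phi^- = [n] \setminus \phi^+$.

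First I would compute the auxiliary product $P := \prod_{i \in \phi^+} \prod_{j \in [n] \setminus \{i\}} |i - j|$ in two ways. Directly: for a fixed $i$, $\prod_{j \in [n]\setminus\{i\}} |i-j| = \prod_{j=1}^{i-1}(i-j) \cdot \prod_{j=i+1}^{n}(j-i) = (i-1)!\,(n-i)! = \Gamma(i)\,\Gamma(n+1-i)$, so writing $\phi^+ = (\phi_1 < \cdots < \phi_p)$ we get $P = \prod_{k=1}^{p} \Gamma(\phi_k)\,\Gamma(n+1-\phi_k)$. Alternatively, splitting the inner product over $j \in \phi^-$ versus $j \in \phi^+ \setminus \{i\}$:
\begin{equation*}
P = \Big(\prod_{\substack{i \in \phi^+ \\ j \in \phi^-}} |i-j|\Big)\Big(\prod_{k \neq \ell} |\phi_k - \phi_\ell|\Big) = \Big(\prod_{\substack{i \in \phi^+ \\ j \in \phi^-}} |i-j|\Big)\prod_{1 \le k < \ell \le p} (\phi_k - \phi_\ell)^2.
\end{equation*}
Comparing the two expressions for $P$ and solving for the cross product yields exactly $\prod_{i \in \phi^+,\, j \in \phi^-} |i-j| = f(\phi^+)$.

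Finally, the product $\prod_{i \in \phi^+,\, j \in \phi^-} |i-j|$ is symmetric under interchanging the two index sets, and $\phi^+ = [n] \setminus \phi^-$, so running the identical computation with $\phi^-$ in the role of $\phi^+$ gives $\prod_{i \in \phi^+,\, j \in \phi^-} |i-j| = f(\phi^-)$ as well; combining with Corollary~\ref{cor:clan-word-count} completes the proof. I do not anticipate a genuine obstacle here: this is a bookkeeping manipulation with products of differences and $\Gamma$-values (morally a Vandermonde identity), and the only point requiring care is keeping the squared Vandermonde factor and the $\Gamma$-factors on the correct sides of the equation.
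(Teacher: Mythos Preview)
Your proof is correct and is essentially the paper's own argument, just packaged slightly differently: the paper rewrites each factor $\prod_{j \in \phi^-,\, j<i}(i-j)^{-1}$ and $\prod_{j \in \phi^-,\, j>i}(j-i)^{-1}$ as $\frac{1}{(i-1)!}\prod_{k \in \phi^+,\, k<i}(i-k)$ and $\frac{1}{(n-i)!}\prod_{k \in \phi^+,\, k>i}(k-i)$ and then takes the product over $i \in \phi^+$, which amounts to exactly your two-way computation of $P = \prod_{i \in \phi^+}\prod_{j \neq i}|i-j|$. The symmetry argument for $\phi^-$ is also handled identically.
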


\begin{proof}
    Regroup the factors in Corollary~\ref{cor:clan-word-count}:
    \begin{equation*}
        \#\Red(\gamma) = (pq)!\prod_{i \in \phi^+}\left[ \prod_{\substack{j \in \phi^- \\ j < i}} \frac{1}{i-j} \prod_{\substack{j \in \phi^- \\ j > i}} \frac{1}{j-i}\right].
    \end{equation*}
    Rewriting
    \begin{equation*}
        \prod_{\substack{j \in \phi^- \\ j < i}} \frac{1}{i-j} = \frac{1}{(i-1)!}\prod_{\substack{k \in \phi^+ \\ k < i}} (i-k) \qquad \text{and} \qquad \prod_{\substack{j \in \phi^- \\ j > i}} \frac{1}{j-i} = \frac{1}{(n-i)!} \prod_{\substack{k \in \phi^+ \\ k > i}} (k-i)
    \end{equation*}
    gives
    \begin{equation*}
        \#\Red(\gamma) = (pq)!\prod_{i \in \phi^+} \left[ \frac{1}{(i-1)!(n-i)!} \prod_{\substack{k \in \phi^+ \\ k \neq i}} (k-i)^2 \right] = \frac{(pq)!}{f(\phi^+)}.
    \end{equation*}
    The same argument works with the roles of $\phi^+$ and $\phi^-$ reversed. 
\end{proof}

Although there is not a unique maximizer of $\#\Red(\gamma)$, as for instance $\#\Red(+-++) = \#\Red(++-+)$, the next lemma provides a weaker uniqueness statement.

\begin{lem} \label{lem:unique-min}
    On the domain $1 \leq \phi_1 < \cdots < \phi_p \leq p+q = n$, the function $\log f(\phi_1, \ldots, \phi_p)$ is strictly convex in each variable. In particular, $f$ has a unique global minimum $\phi^* = (\phi_1^*, \ldots, \phi_p^*)$, and any minimizer of $f$ restricted to the integer lattice $\ZZ^p$ is one of the $2^p$ points obtained from $\phi^*$ by rounding each coordinate either up or down.
\end{lem}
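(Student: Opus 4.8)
The plan is to pass to logarithms and study $g := \log f$ on the open convex set $D = \{(\phi_1,\dots,\phi_p)\in\RR^p : 1\le\phi_1<\cdots<\phi_p\le n\}$, on which
\[
  g(\phi) \;=\; -2\!\!\sum_{1\le k<\ell\le p}\!\!\log(\phi_\ell-\phi_k)\;+\;\sum_{k=1}^{p}\bigl(\log\Gamma(\phi_k)+\log\Gamma(n+1-\phi_k)\bigr).
\]
Strict convexity in each variable is a direct computation: fixing the remaining (pairwise distinct) coordinates, every summand is strictly convex in $\phi_j$ on the interval $(\phi_{j-1},\phi_{j+1})$, because $x\mapsto-\log(x-c)$ has second derivative $(x-c)^{-2}>0$ and $\log\Gamma$ has second derivative $\psi'(x)=\sum_{m\ge0}(x+m)^{-2}>0$ on $(0,\infty)$, while the constraints $1\le\phi_j\le n$ keep $\phi_j$ and $n+1-\phi_j$ positive. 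The same computation shows that each pairwise term $-2\log(\phi_\ell-\phi_k)$ has positive semidefinite Hessian and that the separable part $\sum_k(\cdots)$ is strictly convex, so in fact $g$ is \emph{jointly} strictly convex on $D$; I would record this, as it is what yields uniqueness below.

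For the unique global minimum I would combine joint strict convexity with coercivity. The set $D$ is bounded, its closure $\overline D=\{1\le\phi_1\le\cdots\le\phi_p\le n\}$ is compact, and $g$ extends continuously to $\overline D$ with value $+\infty$ precisely on $\overline D\setminus D$ — where some $\phi_k=\phi_{k+1}$, forcing $-\log(\phi_{k+1}-\phi_k)\to+\infty$ while all other terms stay bounded. A minimizing sequence in $D$ therefore has a subsequence converging in $\overline D$ to a point that cannot lie in $\overline D\setminus D$ (since $g$ is finite somewhere), so the infimum is attained in $D$; strict convexity makes the minimizer $\phi^*$ unique.

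For the rounding statement the basic input is the one-variable fact that a strictly convex function $h$ on an interval, with real minimizer $m$, is strictly decreasing to the left of $m$ and strictly increasing to the right, so its minimizer over the integers of that interval is $\lfloor m\rfloor$ or $\lceil m\rceil$ (whichever lies in the interval). Let $\phi$ be a minimizer of $f$ over $D\cap\ZZ^p$; I want to show that $\phi$ lies in the box $B=\prod_j[\lfloor\phi^*_j\rfloor,\lceil\phi^*_j\rceil]$, since then each coordinate of $\phi$ is $\lfloor\phi^*_j\rfloor$ or $\lceil\phi^*_j\rceil$ and $B$ contains at most $2^p$ integer points. The structural fact I would exploit is that $g$ is \emph{submodular}: for $i\ne j$ the mixed partial is $\partial^2 g/\partial\phi_i\partial\phi_j=-2(\phi_j-\phi_i)^{-2}<0$. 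Since $D\cap\ZZ^p$ is a sublattice of $\ZZ^p$ (the coordinatewise minimum or maximum of two strictly increasing integer sequences is again strictly increasing and still lies in $[1,n]$), a Topkis-type monotone-comparative-statics argument gives that the conditional one-variable minimizer $m_j(\phi_{-j})$ of $t\mapsto g(\phi_{-j},t)$ is nondecreasing in each entry of $\phi_{-j}$, and that the set of integer minimizers of $g$ is itself a sublattice. Using that $\phi_j\in\{\lfloor m_j(\phi_{-j})\rfloor,\lceil m_j(\phi_{-j})\rceil\}$ (the one-variable fact in coordinate $j$) together with $\phi^*_j=m_j(\phi^*_{-j})$ and the monotonicity of $m_j$, I would argue by contradiction: if some coordinate of $\phi$ lay strictly outside $[\lfloor\phi^*_j\rfloor,\lceil\phi^*_j\rceil]$, one identifies a maximal block of coordinates to shift one step toward $B$ — chosen so the shifted point still satisfies $\phi_1<\cdots<\phi_p$ — and shows via submodularity and the monotonicity of the $m_j$ that this does not increase $g$, contradicting minimality after finitely many iterations.

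The routine parts are the two convexity computations and the compactness argument for existence and uniqueness. The real obstacle is the last step: a single coordinate moved toward $\phi^*$ can \emph{increase} a general convex function, so one genuinely needs the submodular structure and must move coordinates in coordinated blocks; and the strict inequalities $\phi_1<\cdots<\phi_p$ can obstruct such blocks, so the bookkeeping — which coordinates to shift together, and the verification that the result still lies in $D\cap\ZZ^p$ and strictly decreases $g$ unless $\phi$ is already in $B$ — is where the care is needed.
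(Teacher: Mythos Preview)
Your convexity and existence/uniqueness arguments match the paper's (it invokes Bohr--Mollerup for $\log\Gamma$ and the same boundary blow-up for existence), and you are right to upgrade to \emph{joint} strict convexity for the uniqueness step: the paper only establishes coordinate-wise strict convexity and then appeals to ``strict convexity'' for uniqueness, which as written really needs the joint version you supply.

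On the rounding claim you diverge from the paper, and with reason. The paper's entire argument is a single sentence: coordinate-wise convexity, plus the fact that a one-variable convex function is unimodal. You correctly flag that the coordinate-wise minimizer $m_j(\phi_{-j})$ is not $\phi^*_j$, so this does not immediately place $\phi_j$ in $\{\lfloor\phi^*_j\rfloor,\lceil\phi^*_j\rceil\}$. Unfortunately your submodularity patch cannot succeed either. Take $g(x,y)=(x-\tfrac{5}{2})^2+(y-10x)^2$: it is jointly strictly convex with $\partial^2 g/\partial x\,\partial y=-20<0$, has real minimizer $(\tfrac{5}{2},25)$, yet its integer minimizers are $(2,20)$ and $(3,30)$, neither a rounding of the real minimizer, and both satisfy $1\le x<y$. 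So coordinate-wise strict convexity, joint strict convexity, and submodularity together are not enough to force the integer minimizer into the box $\prod_j[\lfloor\phi^*_j\rfloor,\lceil\phi^*_j\rceil]$, and the Topkis/block-shifting scheme you outline cannot close the gap without further input specific to $f$. In short, both the paper's one-line argument and your sketch leave this step open.
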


\begin{proof} Fixing $\phi_2, \ldots, \phi_p$, we have
    \begin{equation*}
        \log f(\phi_1, \ldots, \phi_p) = \log \Gamma(\phi_1) + \log \Gamma(n+1-\phi_1) - 2\sum_{k=1} \log(\phi_k - \phi_1) + C
    \end{equation*}
    for some constant $C$. By the Bohr-Mollerup theorem, $\log \Gamma(\phi_1)$ is a convex function of $\phi_1$, and taking second derivatives shows the same is true of $\log \Gamma(n+1-\phi_1)$ and $-\log(\phi_k - \phi_1)$. In fact, $-\log(\phi_k - \phi_1)$ is strictly convex, so the sum $\log f(\phi_1, \ldots, \phi_p)$ is also strictly convex in $\phi_1$, and in every $\phi_i$ by symmetry.

    Strict convexity implies that $\log f$ (hence $f$) has at most one global minimum. To see that it does have one, observe that $f(\phi) \to \infty$ as $\phi$ approaches the boundary of the domain of $f$ where $\phi_i = \phi_{i+1}$ for some $i$, so that for sufficiently small $\epsilon > 0$, the global minimum of $f$ on the compact set where $1 \leq \phi_i \leq \phi_{i+1}-\epsilon \leq n$ for each $i$ will also be a global minimum of $f$ on its whole domain.

    Finally, using the convexity of $\log f$ in each variable individually, the claim about the minimizer of $\log f$ restricted to $\ZZ^n$ reduces to the fact that if $g : [a,b] \to \RR$ is a convex function with global minimum $x^*$, then $g$ is decreasing on $[a,x^*]$ and increasing on $[x^*, b]$.
\end{proof}

We can work out almost exactly which $(2,q)$-clans maximize $\#\Red(q)$. The minimizer $\phi^*$ of $f$ from Lemma~\ref{lem:unique-min} must be invariant under the transformation
\begin{equation*}
    (\phi_1^*, \ldots, \phi_p^*) \mapsto (n+1-\phi_p^*, \ldots, n+1-\phi_1^*),
\end{equation*}
since $f$ itself is. When $\phi = \phi^+(\gamma)$ this transformation corresponds to reversing the one-line notation of $\gamma$. In particular, $\phi^*$ is determined by the one parameter $\phi_1^*$ for $p=2$, or equivalently by the distance between $\phi_1^*$ and $\phi_2^*$. The next theorem shows that $\#\Red(\gamma)$ is maximized for $\gamma \in \Clan_{2,q}$ when $\gamma$ is (essentially) invariant under reversal and its two $+$ signs are separated by distance $\sqrt{n}$.

\begin{thm} When $p=2$, the minimizer $\phi^* = (\phi_1^*, \phi_2^*)$ of $f$ satisfies
    \begin{equation} \label{eq:minimizer-bound}
        \left|\phi_1^* - \left(\frac{n+1}{2} - \frac{1}{2}\sqrt{n}\right)\right| \leq \frac{33}{64}.
    \end{equation}
    Setting $\alpha_1 = \frac{n+1}{2} - \frac{1}{2}\sqrt{n}$ and $\alpha_2 = \frac{n+1}{2} + \frac{1}{2}\sqrt{n}$, the clans $\gamma \in \Clan_{2,q}$ maximizing $\#\Red(\gamma)$ have
    \begin{align*}
        &\phi^+(\gamma)_1 \in \{\lfloor \alpha_1 \rfloor-1, \lfloor \alpha_1 \rfloor, \lceil \alpha_1 \rceil, \lceil \alpha_1 \rceil + 1\}\\
        &\phi^+(\gamma)_2 \in \{\lfloor \alpha_2 \rfloor-1, \lfloor \alpha_2 \rfloor, \lceil \alpha_2 \rceil, \lceil \alpha_2 \rceil + 1\}.
    \end{align*}
\end{thm}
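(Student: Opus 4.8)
The plan is to reduce the two-variable minimization to one variable using symmetry, identify the unique critical point via the digamma function $\psi = \Gamma'/\Gamma$, and then pin down its location with explicit digamma estimates.

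First I would use the symmetry. Since $f(\phi_1,\phi_2) = f(n{+}1{-}\phi_2,\,n{+}1{-}\phi_1)$ and $f$ has a unique global minimizer by Lemma~\ref{lem:unique-min}, that minimizer satisfies $\phi_1^* + \phi_2^* = n+1$. Writing $c = \tfrac{n+1}{2}$ and parametrizing the diagonal $\{\phi_1+\phi_2 = n+1\}$ by $t = \phi_2 - c = c - \phi_1 \in (0,\,c-1]$, one computes $f = \tfrac14 t^{-2}\,\Gamma(c-t)^2\,\Gamma(c+t)^2$, so
\[
\log f = 2\log\Gamma(c-t) + 2\log\Gamma(c+t) - 2\log t - \log 4 .
\]
Each term is strictly convex in $t$ (Bohr--Mollerup for the $\Gamma$-terms, directly for $-\log t$), so $\log f$ is strictly convex on $(0,\,c-1]$, and its minimizer $t^*$ is the unique zero of
\[
g(t) \;:=\; \tfrac12\tfrac{d}{dt}\log f \;=\; \psi(c+t) - \psi(c-t) - \tfrac1t .
\]
Note $g'(t) = \psi'(c+t) + \psi'(c-t) + t^{-2} > 0$, so $g$ is strictly increasing; it tends to $-\infty$ as $t\to 0^+$, and $g(c-1) = H_{n-1} - \tfrac{2}{n-1} > 0$ for $n\ge 3$, so $t^*$ lies in the interior of the domain and is the honest minimizer of $f$. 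Since $\phi_1^* = c - t^*$ while $\alpha_1 = c - \tfrac12\sqrt n$, the bound \eqref{eq:minimizer-bound} is equivalent to $|t^* - \tfrac12\sqrt n| \le \tfrac{33}{64}$.

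Because $g$ is strictly increasing, this in turn reduces to the two inequalities $g\!\left(\tfrac12\sqrt n - \tfrac{33}{64}\right) \le 0$ and $g\!\left(\tfrac12\sqrt n + \tfrac{33}{64}\right) \ge 0$. To verify these I would insert classical two-sided bounds for $\psi$ (e.g. $\log x - \tfrac1x < \psi(x) < \log x - \tfrac1{2x}$ for $x>0$, or the sharper $\psi(x) = \log x - \tfrac1{2x} - \tfrac1{12x^2} + O(x^{-4})$ with an explicit remainder) to sandwich $\psi(c+t)-\psi(c-t)$ between $\log\tfrac{c+t}{c-t}$ plus small correction terms, and compare with $1/t$. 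Since $\log\tfrac{c+t}{c-t} = \tfrac{2t}{c} + O(t^3 c^{-3})$, the equation $g(t)=0$ forces $t^2 = \tfrac{c}{2}(1+O(c^{-1})) = \tfrac14(n+1) + O(1)$, hence $t^* = \tfrac12\sqrt{n+1} + O(n^{-1/2})$; combined with $\tfrac12\sqrt{n+1} - \tfrac12\sqrt n = \tfrac{1}{2(\sqrt{n+1}+\sqrt n)} < \tfrac{1}{4\sqrt n}$, this already gives $|t^* - \tfrac12\sqrt n| = O(n^{-1/2})$, below $\tfrac{33}{64}$ once $n$ exceeds an explicit threshold $n_0$. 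The finitely many remaining cases $3 \le n \le n_0$ are then checked directly by evaluating $g$ at $\tfrac12\sqrt n \pm \tfrac{33}{64}$.

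For the last assertion, Lemma~\ref{lem:unique-min} says every integer minimizer of $f$ is obtained from $\phi^* = (\phi_1^*,\phi_2^*)$ by rounding each coordinate up or down; since $|\phi_i^* - \alpha_i| \le \tfrac{33}{64} < 1$ we get $\{\lfloor\phi_i^*\rfloor, \lceil\phi_i^*\rceil\} \subseteq \{\lfloor\alpha_i\rfloor - 1,\ \lfloor\alpha_i\rfloor,\ \lceil\alpha_i\rceil,\ \lceil\alpha_i\rceil + 1\}$, and by Proposition~\ref{prop:reduced-word-function} the clans maximizing $\#\Red(\gamma)$ are precisely those whose $\phi^+(\gamma)$ is an integer minimizer of $f$. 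The main obstacle will be the second step: converting the soft $O(n^{-1/2})$ estimate into the explicit constant $\tfrac{33}{64}$ \emph{uniformly} in $n$, which requires digamma bounds that are both hand-manipulable and tight enough to control $g$ at $\tfrac12\sqrt n \pm \tfrac{33}{64}$ across all moderate $n$, plus a clean cutoff between the asymptotic regime and the small-$n$ cases handled by computation.
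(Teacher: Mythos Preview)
Your reduction is exactly the paper's: use the symmetry $\phi_1^*+\phi_2^*=n+1$, set $c=\frac{n+1}{2}$ and $t=c-\phi_1$, and locate the unique zero of
\[
g(t)=\psi(c+t)-\psi(c-t)-\tfrac1t .
\]
The difference is only in how the explicit constant $\tfrac{33}{64}$ is extracted. You propose digamma bounds of the type $\log x-\tfrac1x<\psi(x)<\log x-\tfrac1{2x}$, which yield an asymptotic estimate $t^*=\tfrac12\sqrt{n}+O(n^{-1/2})$ plus a finite check for small $n$. The paper instead uses the simpler pair $\log(y-\tfrac12)<\psi(y)<\log y$ together with $\log(1+y)\le y$; these sandwich $g(t)$ between two explicit \emph{rational} functions
\[
\frac{2t-\tfrac12}{c+t-\tfrac12}-\frac1t \;<\; g(t) \;<\; \frac{2t+\tfrac12}{c-t-\tfrac12}-\frac1t,
\]
whose positive zeros are exactly $\pm\tfrac38+\tfrac12\sqrt{n+\tfrac{9}{16}}$. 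Combining this with $\sqrt{n+9/16}-\sqrt n\le 9/32$ gives $|t^*-\tfrac12\sqrt n|\le \tfrac38+\tfrac{9}{64}=\tfrac{33}{64}$ uniformly in $n$, with no case split at all. So your plan is sound, but the ``main obstacle'' you flag disappears if you choose these particular digamma bounds: they are loose enough to linearize after applying $\log(1+y)\le y$, yet tight enough that the resulting interval has width $\tfrac34$ independent of $n$.
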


\begin{proof} Lemma~\ref{lem:unique-min} shows that $\phi^+(\gamma)_1$ is one of the two closest integers to $\phi_1^*$, so if it is known that $|\phi_1^* - \alpha_1| < 1$, then $\phi^+(\gamma)_1$ must be one of $\lfloor \alpha_1 \rfloor-1, \lfloor \alpha_1 \rfloor, \lceil \alpha_1 \rceil, \lceil \alpha_1 \rceil + 1$. The analogous fact for $\phi^+(\gamma)_2$ holds by symmetry of $\phi^*$. Thus, it suffices to prove the bound \eqref{eq:minimizer-bound}.

    Since $\phi_2^* = n+1-\phi_1^*$, we may as well minimize the single variable function $f(\phi_1, n+1-\phi_1)$ on the domain $\phi_1 \in [1, \frac{n}{2}]$. It is helpful to let $m = \frac{n+1}{2}$ and use the new coordinate $x = m-\phi_1$:
    \begin{align*}
        \log f(\phi_1, n+1-\phi_1) &= 2\log \Gamma(n+1-\phi_1) + 2\log \Gamma(\phi_1) - 2\log(n+1-2\phi_1)\\
        &= 2\log \Gamma(m+x) + 2\log \Gamma(m-x) - 2\log(2x).
    \end{align*}
    Set $g(x) = \log \Gamma(m+x) + \log \Gamma(m-x) - \log(2x)$. Then
    \begin{equation*}
        g'(x) = \Psi(m+x) - \Psi(m-x) - \frac{1}{x},
    \end{equation*}
    where $\Psi(y) = \frac{d}{dy} \log \Gamma(y)$. The inequalities $\log(y-\frac{1}{2}) < \Psi(y) < \log(y)$ for $y > \frac{1}{2}$ and $\log(1+y) \leq y$ for $y > -1$ imply
    \begin{align*}
        -\log \frac{m-x}{m+x-{\textstyle \frac{1}{2}}} - \frac{1}{x} <\, &g'(x) < \log \frac{m+x}{m-x-{\textstyle \frac{1}{2}}} - \frac{1}{x}\\
        \Longrightarrow \quad \frac{2x - {\textstyle \frac{1}{2}}}{m+x-{\textstyle \frac{1}{2}}} - \frac{1}{x} <\,  &g'(x) < \frac{2x + {\textstyle \frac{1}{2}}}{m-x-{\textstyle \frac{1}{2}}} - \frac{1}{x}.
    \end{align*}
    The positive zeroes of the lower and upper bounds here are, respectively, 
    \begin{equation*}
        \frac{3}{8} + \frac{1}{2}\sqrt{2m - \frac{7}{16}} \quad \text{and} \quad -\frac{3}{8} + \frac{1}{2}\sqrt{2m - \frac{7}{16}},
    \end{equation*}
    or $\pm \frac{3}{8} + \frac{1}{2} \sqrt{n + \frac{9}{16}}$. It follows that $g$ has a critical point $x^*$ in $(-\frac{3}{8} + \frac{1}{2} \sqrt{n + \frac{9}{16}}, \frac{3}{8} + \frac{1}{2} \sqrt{n + \frac{9}{16}})$, and strict convexity of $g$ forces $x^*$ to be its unique global minimizer.

    Using these bounds on $x^*$ and the inequality $\sqrt{n+9/16}-\sqrt{n} = \frac{9/16}{\sqrt{n+9/16}+\sqrt{n}} \leq 9/32$ for $n \geq 1$ gives
    \begin{equation*}
        -\frac{3}{8} < x^* - \frac{1}{2}\sqrt{n} < \frac{3}{8} + \frac{9}{64} = \frac{33}{64}.
    \end{equation*}
    Since $\phi_1^* = \frac{n+1}{2} - x^*$, the bound \eqref{eq:minimizer-bound} follows.
\end{proof}

Let $\nred(\gamma)$ be the number of reduced words of $\gamma$. Although we do not have a general description of the clans maximizing $\nred$, we can prove a sort of continuity result showing that a maximizer of $\nred$ in $\Clan_{p,q}$ cannot be very different from a maximizer in $\Clan_{p,q+1}$. Define a partial order $\preceq$ on matchless $(p,q)$-clans by declaring $\gamma' \preceq \gamma$ if $\phi^+(\gamma')_i \leq \phi^+(\gamma)_i$ for $i = 1, \ldots, p$. This partial order is a lattice, with
\begin{equation*}
    \phi^+(\gamma \vee \gamma')_i = \max(\phi^+(\gamma)_i, \phi^+(\gamma)_i) \quad \text{and} \quad \phi^+(\gamma \wedge \gamma')_i = \min(\phi^+(\gamma)_i, \phi^+(\gamma)_i).
\end{equation*}
Write $\gamma-$ and $-\gamma$ for the clans obtained by appending or prepending a $-$ to the one-line notation of $\gamma$.

\begin{lem} \label{lem:reduced-word-comparison}
    If $\gamma' \prec \gamma$ and $\nred(\gamma') \leq \nred(\gamma)$, then $\nred(\gamma'-) < \nred(\gamma-)$.
\end{lem}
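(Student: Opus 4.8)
The idea is to reduce everything to the closed formula of Proposition~\ref{prop:reduced-word-function}. Since $\gamma$ and $\gamma'$ are matchless $(p,q)$-clans, appending a $-$ to the one-line notation produces matchless $(p,q+1)$-clans $\gamma-$ and $\gamma'-$, and it moves none of the $+$ signs, so $\phi^+(\gamma-) = \phi^+(\gamma)$ and $\phi^+(\gamma'-) = \phi^+(\gamma')$. Thus I would apply Proposition~\ref{prop:reduced-word-function} twice to each clan: once with parameters $(p,q)$, where the relevant $n$ is $p+q$, and once with parameters $(p,q+1)$, where the relevant $n$ becomes $p+q+1$. In passing from the first to the second, the Vandermonde-type factors $\prod_{k<\ell}(\phi_k-\phi_\ell)^{-2}$ and the factors $\Gamma(\phi_k)$ are unchanged, while each $\Gamma(n+1-\phi_k)$ is replaced by $\Gamma(n+2-\phi_k) = (n+1-\phi_k)\,\Gamma(n+1-\phi_k)$ (note $n+1-\phi^+(\gamma)_k \geq 1 > 0$ since $\phi^+(\gamma)_k \leq n$). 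This telescoping yields the clean identity
\[
    \nred(\gamma-) \;=\; \frac{(p(q+1))!}{(pq)!}\; \nred(\gamma)\; \prod_{k=1}^p \frac{1}{\,n+1-\phi^+(\gamma)_k\,},
\]
and the analogous identity for $\gamma'-$ with $\phi^+(\gamma')$ in place of $\phi^+(\gamma)$.

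\textbf{Key step: comparison.} By definition of $\prec$, we have $\phi^+(\gamma')_k \leq \phi^+(\gamma)_k$ for every $k$, with strict inequality for at least one $k$ (because $\gamma' \neq \gamma$). Hence $n+1-\phi^+(\gamma')_k \geq n+1-\phi^+(\gamma)_k \geq 1$ for all $k$, with strict inequality for at least one index, and therefore
\[
    \prod_{k=1}^p \frac{1}{\,n+1-\phi^+(\gamma')_k\,} \;<\; \prod_{k=1}^p \frac{1}{\,n+1-\phi^+(\gamma)_k\,}.
\]
Combining this strict inequality with the hypothesis $\nred(\gamma') \leq \nred(\gamma)$ and the positive common constant $(p(q+1))!/(pq)!$ — all quantities involved being strictly positive — gives $\nred(\gamma'-) < \nred(\gamma-)$, which is the claim.

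\textbf{Main obstacle.} There is essentially no hard step here: the argument is a short manipulation of the gamma-function formula, and the estimate reduces to a product of term-by-term inequalities. The only point that needs genuine care is the bookkeeping of which value of $n$ enters the function $f$ of Proposition~\ref{prop:reduced-word-function} when one moves from $\Clan_{p,q}$ to $\Clan_{p,q+1}$, together with the identity $\Gamma(n+2-\phi_k)/\Gamma(n+1-\phi_k) = n+1-\phi_k$ that makes the ratio $\nred(\gamma-)/\nred(\gamma)$ collapse to a single product over $\phi^+(\gamma)$.
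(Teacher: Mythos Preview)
Your proof is correct and follows essentially the same route as the paper: both arguments invoke Proposition~\ref{prop:reduced-word-function}, observe that appending a $-$ leaves $\phi^+$ unchanged while replacing each $\Gamma(n+1-\phi_k)$ by $(n+1-\phi_k)\Gamma(n+1-\phi_k)$, and then use $\gamma'\prec\gamma$ to make the resulting product comparison strict. The only cosmetic difference is that the paper packages the computation as the single ratio $\nred(\gamma-)/\nred(\gamma'-) = \bigl(\nred(\gamma)/\nred(\gamma')\bigr)\prod_k (n+1-\phi'_k)/(n+1-\phi_k)$, whereas you compute $\nred(\gamma-)/\nred(\gamma)$ and $\nred(\gamma'-)/\nred(\gamma')$ separately.
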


\begin{proof}
    Abbreviate $\phi^+(\gamma)$ and $\phi^+(\gamma')$ as $\phi$ and $\phi'$. Proposition~\ref{prop:reduced-word-function} shows
    \begin{equation*}
        \nred(\gamma) = (pq)!\prod_{1 \leq i < j \leq p}(\phi_j - \phi_i)^2 \prod_{i=1}^p \frac{1}{(\phi_i-1)!(n-\phi_i)!}.
    \end{equation*}
    Thus,
    \begin{equation*}
        \frac{\nred(\gamma-)}{\nred(\gamma'-)} = \prod_{i=1}^p \frac{(n+1-\phi_i')!}{(n+1-\phi_i)!} = \frac{\nred(\gamma)}{\nred(\gamma')} \prod_{i=1}^p \frac{n-\phi_i'+1}{n-\phi_i+1},
    \end{equation*}
    and the last expression strictly exceeds $\nred(\gamma)/\nred(\gamma') \geq 1$ because $\gamma' \prec \gamma$.
\end{proof}

\begin{thm} Suppose $\gamma \in \Clan_{p,q}$ and $\delta \in \Clan_{p,q+1}$ are maximizers of $R$. Then all entries of the vector $\phi^+(\delta) - \phi^+(\gamma)$ are either $0$ or $1$. \end{thm}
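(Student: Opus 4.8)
The plan is to prove the one-sided statement that $\phi^+(\delta)_i \geq \phi^+(\gamma)_i$ for every $i$, for \emph{all} maximizers $\gamma$ of $\nred$ on $\Clan_{p,q}$ and $\delta$ of $\nred$ on $\Clan_{p,q+1}$, and then to obtain the upper bound $\phi^+(\delta)_i \leq \phi^+(\gamma)_i + 1$ for free by reversal. Both $\gamma$ and $\delta$ are matchless (a maximizer is always matchless, as noted at the beginning of this section), so Proposition~\ref{prop:reduced-word-function} and Lemma~\ref{lem:reduced-word-comparison} apply to them. For the reduction: $\nred$ is invariant under $\rev$ (immediate from Corollary~\ref{cor:clan-word-count}), so $\rev(\gamma)$ and $\rev(\delta)$ are again maximizers, of $\nred$ on $\Clan_{p,q}$ and $\Clan_{p,q+1}$ respectively; since $\phi^+(\rev(\gamma))_i = (p{+}q){+}1 - \phi^+(\gamma)_{p+1-i}$ and $\phi^+(\rev(\delta))_i = (p{+}q){+}2 - \phi^+(\delta)_{p+1-i}$, the one-sided statement applied to $(\rev(\gamma),\rev(\delta))$ reads exactly $\phi^+(\delta)_j \leq \phi^+(\gamma)_j + 1$ for all $j$, which together with the one-sided statement for $(\gamma,\delta)$ yields the theorem.

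To prove $\phi^+(\delta)_i \geq \phi^+(\gamma)_i$, write $\phi = \phi^+(\gamma)$ and $\psi = \phi^+(\delta)$, and suppose for contradiction that $\psi_{i_0} < \phi_{i_0}$ for some $i_0$. Viewing $\gamma{-}$ and $\delta$ as matchless $(p,q{+}1)$-clans with $\phi^+(\gamma{-}) = \phi$ and $\phi^+(\delta) = \psi$, form their meet $\gamma'' := (\gamma{-}) \wedge \delta$ and join $\nu := (\gamma{-}) \vee \delta$ in the lattice of matchless $(p,q{+}1)$-clans, so $\phi^+(\gamma'')_i = \min(\phi_i,\psi_i)$ and $\phi^+(\nu)_i = \max(\phi_i,\psi_i)$. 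Since $\min(\phi_i,\psi_i) \leq \phi_i \leq p{+}q$, the last symbol of $\gamma''$ is a $-$, so $\gamma'' = \mu{-}$ for the matchless $\mu \in \Clan_{p,q}$ with $\phi^+(\mu)_i = \min(\phi_i,\psi_i)$. Then $\mu \preceq \gamma$, with strict inequality in coordinate $i_0$, so $\mu \prec \gamma$; and $\nred(\mu) \leq \nred(\gamma)$ since $\gamma$ is a maximizer. Lemma~\ref{lem:reduced-word-comparison} then gives $\nred(\gamma'') = \nred(\mu{-}) < \nred(\gamma{-})$.

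The contradiction will come from the fact that $\nred$ is log-supermodular on matchless $(p,q{+}1)$-clans, i.e.\ $\nred(\gamma'')\,\nred(\nu) \geq \nred(\gamma{-})\,\nred(\delta)$. To see this, write each of the four quantities using Proposition~\ref{prop:reduced-word-function}:
\begin{equation*}
    \nred(\eta) = \bigl(p(q{+}1)\bigr)!\,\prod_{1 \leq i < j \leq p}(\theta_j - \theta_i)^2 \prod_{i=1}^{p} \frac{1}{(\theta_i - 1)!\,(p{+}q{+}1 - \theta_i)!}, \qquad \theta = \phi^+(\eta).
\end{equation*}
Because $\{\min(\phi_i,\psi_i),\max(\phi_i,\psi_i)\} = \{\phi_i,\psi_i\}$ for each $i$, the factorial products appearing in $\nred(\gamma'')\,\nred(\nu)$ and in $\nred(\gamma{-})\,\nred(\delta)$ coincide, so it remains to compare the Vandermonde factors, and for this it suffices to check for each pair $i<j$ the elementary inequality
\begin{equation*}
    \bigl(\min(\phi_j,\psi_j) - \min(\phi_i,\psi_i)\bigr)\bigl(\max(\phi_j,\psi_j) - \max(\phi_i,\psi_i)\bigr) \;\geq\; (\phi_j - \phi_i)(\psi_j - \psi_i).
\end{equation*}
Interchanging $\phi$ and $\psi$ if necessary we may assume $\phi_i \leq \psi_i$, and the inequality is an equality unless $\phi_j > \psi_j$, in which case the left side minus the right side equals $(\phi_j - \psi_j)(\psi_i - \phi_i) \geq 0$. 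Multiplying over all pairs gives the claimed log-supermodularity. Now $\nu \in \Clan_{p,q+1}$ and $\delta$ is a maximizer there, so $\nred(\nu) \leq \nred(\delta)$; combining with log-supermodularity and dividing by $\nred(\nu) > 0$ gives $\nred(\gamma'') \geq \nred(\gamma{-})$, contradicting the previous paragraph. Hence $\psi_i \geq \phi_i$ for all $i$, and the proof is complete.

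The one genuinely new ingredient beyond Lemma~\ref{lem:reduced-word-comparison} and the maximality hypotheses is the log-supermodularity of $\nred$ — equivalently, the log-submodularity of the Vandermonde-type function $f$ of Proposition~\ref{prop:reduced-word-function} — and I expect recognizing that this is what makes the argument close, together with the small pairwise inequality that verifies it, to be the main point; the rest is bookkeeping with the lattice $\preceq$ on matchless clans and the reversal symmetry.
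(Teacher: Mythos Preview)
Your proof is correct and follows essentially the same route as the paper's: both take the meet and join of $\gamma-$ with the candidate $\delta$ (or $\epsilon$) in the lattice $\preceq$, use Lemma~\ref{lem:reduced-word-comparison} to get a strict inequality on the meet, invoke the log-supermodularity of $\nred$ via the same pairwise Vandermonde inequality, and then finish with a reversal/symmetry argument to get the other bound. Your write-up spells out the reversal reduction and the elementary inequality a bit more explicitly than the paper does, but the underlying argument is identical.
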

    \begin{proof}
        Suppose $\epsilon \in \Clan_{p,q+1}$ is such that $\gamma- \not\preceq \epsilon$. We will show that $\epsilon$ then does not maximize $\nred$, so that necessarily $\gamma- \preceq \delta$ and (by a symmetric argument) $-\gamma \preceq \delta$, which together imply the theorem.

        It is clear that the one-line notation of $\epsilon \wedge \gamma-$ ends in $-$, so let $\zeta \in \Clan_{p,q}$ be such that ${\zeta-} = \epsilon \wedge \gamma-$. Then $\nred(\zeta) \leq \nred(\gamma)$ by the choice of $\gamma$, and the (strict!) inequality $\epsilon \wedge {\gamma-} \prec {\gamma-}$ implies $\zeta \prec \gamma$. Therefore $\nred(\zeta-) = \nred(\epsilon \wedge \gamma-) < \nred(\gamma-)$ by Lemma~\ref{lem:reduced-word-comparison}.

        Now, using the formula of Proposition~\ref{prop:reduced-word-function},
        \begin{equation*} 
            \frac{\nred(\epsilon \wedge \gamma-)\nred(\epsilon \vee \gamma-)}{\nred(\gamma-)\nred(\epsilon)} = \left[\prod_{1 \leq i < j \leq p} \frac{\phi^+_j(\epsilon \wedge \gamma-) - \phi^+_i(\epsilon \wedge \gamma-) }{\phi^+_j(\gamma-) - \phi^+_i(\gamma-) } \frac{\phi^+_j(\epsilon \vee \gamma-) - \phi^+_i(\epsilon \vee \gamma-)}{\phi^+_j(\epsilon) - \phi^+_i(\epsilon)} \right]^2,
        \end{equation*}
        which is at least $1$ by the general inequality
        \begin{equation*}
            (a_2-a_1)(b_2-b_1) \leq (\min(a_2,b_2)-\min(a_1,b_1))(\max(a_2,b_2)-\max(a_1,b_1))
        \end{equation*}
        when $a_1 < a_2$ and $b_1 < b_2$. Having shown $\nred(\epsilon \wedge \gamma-) < \nred(\gamma-)$ in the previous paragraph, this implies $\nred(\epsilon \vee \gamma-) > \nred(\epsilon)$, so $\epsilon$ does not maximize $\nred$.
    \end{proof}

    We conclude this section by describing connections to work of Pittel and Romik on random Young tableaux of rectangular shape, although we do not attempt to prove any precise results. The uniqueness in Lemma~\ref{lem:unique-min} shows that if $\gamma^* \in \Clan_{p,q}$ maximizes $\nred(\gamma^*)$, then $\gamma^*$ and $\rev(\gamma^*)$ should be effectively equal (to be precise, $\phi^+(\gamma^*)$ and $\phi^+(\rev(\gamma^*))$ differ by a vector with entries from $\{0,1,-1\}$). Proposition~\ref{prop:symmetries} then implies $\lambda^+(\gamma^*) \approx \lambda^+(\gamma^*)^{\vee} = \lambda^-(\gamma^*)^t$, so
    \begin{equation*}
        \nred(\gamma^*) \approx {pq \choose pq/2} f^{\lambda^+(\gamma^*)} f^{\lambda^-(\gamma^*)} = {pq \choose pq/2} f^{\lambda^+(\gamma^*)} f^{(\lambda^+(\gamma^*)^\vee)^t}  = {pq \choose pq/2} f^{\lambda^+(\gamma^*)} f^{\lambda^+(\gamma^*)^\vee}.
    \end{equation*}
    Thus, maximizing $\nred$ is equivalent to maximizing $f^{\lambda}f^{\lambda^\vee}$ over $\lambda \subseteq [p] \times [q]$ with $|\lambda| \approx \lfloor pq/2 \rfloor$.

    Let $\SYT(\lambda)$ be the set of standard tableaux of shape $\lambda$, and write $(q^p)$ for the $p \times q$ rectangular partition. For any fixed $0 \leq k \leq pq$, there is a bijection
    \begin{equation*}
        \SYT(q^p) \to \bigcup_{\substack{\lambda \vdash k \\ \lambda \subseteq [p] \times [q]}} \SYT(\lambda) \times \SYT(\lambda^{\vee}),
    \end{equation*}
    which sends $T \in \SYT(q^p)$ to $(T_1, T_2)$ where $T_1$ is the subtableau of $T$ containing $1, 2, \ldots, k$, and $T_2$ is the complement of $T_1$ in $T$ rotated $180^\circ$ and with the entries $pq, pq-1, \ldots, k+1$ replaced by $1, 2, \ldots, pq-k$. It follows that $f^{\lambda} f^{\lambda^\vee} / f^{(q^p)}$ is the probability that the entries in $[|\lambda|]$ of a uniformly random member of $\SYT(q^p)$ form a subtableau of shape $\lambda$. By the previous paragraph we would like to maximize this probability over $\lambda$ with $|\lambda| = \lfloor pq/2 \rfloor$.

    In \cite{pittel-romik}, Pittel and Romik describe a ``typical'' random standard tableau of shape $(q^p)$ when $p, q$ are large (and in a fixed ratio). To be precise, given $T \in \SYT(q^p)$ let $S_T : [0,1) \times [0,p/q)$ be the function 
    \begin{equation*}
        S_T(x,y) = \frac{1}{pq}T(\lfloor qy \rfloor+1, \lfloor qx \rfloor+1),
    \end{equation*}
    where $T(i,j)$ is the entry of $T$ in row $i$ and column $j$. That is, we think of $T$ as a surface whose height above the $xy$-plane is given by the entries of $T$, rescaled so that the maximum height is $1$ and the surface lies above the rectangle $[0,1) \times [0,p/q)$. It is helpful to picture $T$ in the French style here, so that $1$ is in its lower-left corner at $(0,0)$ and $pq$ is in its upper-right corner.

    \begin{thm}[\cite{pittel-romik}, Theorem 5] Fix $\theta \in (0,1]$, and suppose $p_1, p_2, \ldots$ is a sequence of integers such that $\lim_{q \to \infty} p_q/q = \theta$. There is an explicit function $L_{\theta} : [0,1] \times [0,\theta] \to [0,1]$ such that for all $\epsilon > 0$ and all $(x,y) \in [0,1) \times [0,\theta)$,
        \begin{equation*}
            \lim_{q\to \infty} \mathbf{P}(|S_T(x,y) - L_{\theta}(x,y)| > \epsilon \,:\, \text{$T \in \SYT(q^{p_q})$ uniformly random} ) = 0.
        \end{equation*}
    \end{thm}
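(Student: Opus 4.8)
The plan is to follow the variational (large--deviation) strategy for limit shapes of random Young tableaux. First I would reformulate the problem combinatorially: a standard Young tableau $T$ of shape $q^{p_q}$ is exactly an increasing chain of Young subdiagrams $\emptyset = \mu^{(0)} \subset \mu^{(1)} \subset \cdots \subset \mu^{(pq)} = q^{p_q}$, where $\mu^{(k)} = \{(i,j) : T(i,j) \le k\}$, and the rescaled surface $S_T$ records precisely this chain through its sublevel sets. The number of tableaux for which a prescribed $\mu^{(k)}$ occurs (with $|\mu^{(k)}| = k$) is $f^{\mu^{(k)}} f^{q^{p_q}/\mu^{(k)}}$, and more generally the number of tableaux whose chain is pinned to pass through a fixed finite list of intermediate shapes is a product of counts of standard Young tableaux of skew shapes. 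The goal is to control these products in the scaling limit $p_q/q \to \theta$.

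The second step is to establish the exponential asymptotics of these counts. Using the hook length formula, the classical Vershik--Kerov and Logan--Shepp estimates for $\log f^\mu$, and their skew-shape analogues, I would show that the number of tableaux whose rescaled surface lies within $\epsilon$ (in sup norm) of a fixed admissible surface $S$ equals $\exp\big((pq)\,\mathcal{I}(S) + o(pq)\big)$, where an \emph{admissible} surface is a function on $[0,1]\times[0,\theta]$ that is nondecreasing in each variable and carries the correct boundary normalization, and $\mathcal{I}$ is an explicit functional given by an integral over the rectangle of a local entropy density in $\nabla S$ (assembled by integrating, over the level parameter, the rate function governing the motion of each level curve). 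The substance here is producing matching upper and lower bounds, together with enough uniformity and upper semicontinuity of $\mathcal{I}$ to promote local estimates to a statement about the entire surface.

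The third step is the variational analysis. The functional $\mathcal{I}$ is strictly concave on the convex set of admissible surfaces, so it attains its maximum at a unique $L_\theta$; writing down the Euler--Lagrange equation for $\mathcal{I}$ and solving it explicitly is where the closed form of $L_\theta$ (in arcsine-type / inverse-trigonometric expressions, equivalently a conformal parametrization of the level curves) is produced. Finally, since $\tfrac{1}{pq}\log f^{q^{p_q}} \to \max_S \mathcal{I}(S) = \mathcal{I}(L_\theta)$ while any admissible $S$ with $\|S - L_\theta\|_\infty \ge \epsilon$ satisfies $\mathcal{I}(S) \le \mathcal{I}(L_\theta) - \delta(\epsilon)$ for some $\delta(\epsilon) > 0$, a union bound over a sufficiently fine net of surfaces yields $\mathbf{P}(\|S_T - L_\theta\|_\infty > \epsilon) \to 0$ for a uniformly random $T \in \SYT(q^{p_q})$; the pointwise convergence in probability asserted in the theorem is an immediate consequence.

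The main obstacle I expect is the second step: proving the large--deviation asymptotics rigorously with matching bounds, securing the semicontinuity and compactness needed to pass from finitely many pinned intermediate shapes to the continuum surface, and then actually solving the resulting variational problem in closed form rather than merely establishing existence and uniqueness of the optimizer. The combinatorial reformulation of Step~1 and the concentration argument of Step~3 are comparatively routine once the asymptotic enumeration is in hand.
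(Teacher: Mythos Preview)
This theorem is not proved in the paper at all: it is quoted verbatim as \cite{pittel-romik}, Theorem~5, and the paper simply invokes it as an external input in the informal discussion at the end of Section~\ref{sec:max}. There is therefore no ``paper's own proof'' to compare your proposal against; the authors take the result as a black box and only use the explicit form of $L_\theta$ (more precisely, its level curve $L_\theta = \tfrac{1}{2}$) to write down a conjectural limiting density $f(t)$ for the $+$'s in a clan maximizing $\#\Red$.

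That said, your outline is a reasonable high-level sketch of the kind of argument Pittel and Romik actually carry out: they do set up a variational problem for the limiting surface, identify the rate functional, show it has a unique maximizer given by explicit arcsine-type formulas, and deduce concentration. If you intend to supply a proof rather than a citation, be aware that the genuinely hard content is exactly what you flag in your last paragraph --- rigorous matching upper and lower large-deviation bounds for products of skew standard-tableaux counts, and the explicit solution of the Euler--Lagrange equation --- and that each of these occupies substantial space in \cite{pittel-romik}. For the purposes of the present paper, though, no proof is expected: a citation suffices.
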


    In particular, for large $q$, a random $T \in \SYT(q^{p_q})$ has its entries $1, 2, \ldots, \lfloor p_q q/2 \rfloor$ contained in a subtableau whose shape resembles the region in $[0,1) \times [0,\theta)$ below the level curve $\{(x,y) : L_{\theta}(x,y) = \frac{1}{2}\}$. We expect that if a matchless clan $\gamma$ is chosen as the top element of a uniformly random maximal chain in $\Clan_{p_q,q}$ with $q$ large, then $\lambda^+(\gamma)$ should resemble this same limiting shape, and $\gamma$ should be close to a maximizer of $\nred$ with high probability.
    
    It is natural to describe the resulting ``limit clan'' by a density function $f : [0,1] \to \RR$, so that for $t \in [0,1]$,
    \begin{equation}
        \text{\# of $+$'s among $\gamma_1, \gamma_2, \ldots, \gamma_{\lfloor t(p+q) \rfloor}$} \approx p\int_0^t f(t')\,dt'. \label{eq:cumulative}
    \end{equation}
    Write $C(t) = \int_0^t f(t')\,dt'$. If \eqref{eq:cumulative} holds, then
    \begin{equation} \label{eq:cumulative-2}
        C\left(\frac{\phi^+_i}{p+q}\right) = C\left(\frac{q-\lambda^+_i+i}{p+q}\right)  \approx \frac{i}{p}
    \end{equation}
    for $i \in [p]$, by definition of $\phi^+(\gamma)$. Letting $p,q \to \infty$ (with $p/q \to \theta$) and replacing $i/p$ with $t \in [0,1]$, equation \eqref{eq:cumulative-2} becomes
    \begin{equation*}
        C\left(\frac{1-x(t)+\theta t}{1+\theta}\right) = t. 
    \end{equation*}
    where $x(t)$ is such that $L_{\theta}(x(t), \theta t) = \frac{1}{2}$. Using the explicit formulas from \cite{pittel-romik}, one finds
    \begin{equation*}
        C'(t) = f(t) = \begin{cases}
            \frac{1+\theta}{2\theta} \left[1 - \frac{2}{\pi} \sin^{-1}\left(\frac{1-\theta}{1+\theta}\frac{1}{2\sqrt{t(1-t)}}\right)\right] & \text{if $|t-\frac{1}{2}| < \frac{\sqrt{\theta}}{\theta+1}$}\\
            0 & \text{otherwise}
        \end{cases}
    \end{equation*}

    

\section{Connections to involution words}
\label{sec:inv-vs-clan} 

Let $\I_n$ be the set of involutions in $S_n$. Given $z \in \I_n$ and an adjacent transposition $s_i$, define
\begin{equation*}
    z \ast s_i = \begin{cases}
        z s_i & \text{if $s_i z = z s_i$}\\
        s_i z s_i & \text{otherwise}
    \end{cases}
\end{equation*}
Note that $z \ast s_i$ is again an involution. The \emph{weak order} on $\I_n$ is the transitive closure of the relations $z \ast s_i < z$ when $\ell(z \ast s_i) < \ell(z)$ \cite{CJW, HMP1, HMP2, hultman-twisted-involutions, richardson-springer}.

\begin{defn} A \emph{reduced involution word} for $z \in \I_n$ is the sequence of labels along a saturated chain in weak order from the identity involution to $z$. Equivalently, it is a minimal-length word $a_1 \cdots a_\ell$ such that
    \begin{equation*}
        z = (\cdots ((1 \ast s_{a_1})  \ast s_{a_2}) \ast \cdots ) \ast s_{a_\ell}.
    \end{equation*}
    To avoid confusion with usual reduced words for $z$, we write $\hat \Red(z)$ for the set of reduced involution words of $z$.
\end{defn}

\begin{ex}
    The weak order on $\I_3$, with involutions drawn as partial matchings of $\{1,2,3\}$:
    \begin{center}
        \begin{tikzpicture}[scale=1.2, auto]
            \node[circle, fill, inner sep=0pt, minimum size = 1mm] (858286519) at (0.0,0) {};
            \node[circle, fill, inner sep=0pt, minimum size = 1mm] (858286520) at (0.333,0) {};
            \node[circle, fill, inner sep=0pt, minimum size = 1mm] (858286521) at (0.667,0) {};
            
            \node[circle, fill, inner sep=0pt, minimum size = 1mm] (495822353) at (-1.0,1.2) {};
            \node[circle, fill, inner sep=0pt, minimum size = 1mm] (495822354) at (-0.667,1.2) {};
            \node[circle, fill, inner sep=0pt, minimum size = 1mm] (495822355) at (-0.333,1.2) {};
            \draw (495822353) to[bend left=40] (495822354);
            
            \node[circle, fill, inner sep=0pt, minimum size = 1mm] (385313341) at (1.0,1.2) {};
            \node[circle, fill, inner sep=0pt, minimum size = 1mm] (385313342) at (1.333,1.2) {};
            \node[circle, fill, inner sep=0pt, minimum size = 1mm] (385313343) at (1.667,1.2) {};
            \draw (385313342) to[bend left=40] (385313343);
            
            \node[circle, fill, inner sep=0pt, minimum size = 1mm] (336065196) at (0.0,2.4) {};
            \node[circle, fill, inner sep=0pt, minimum size = 1mm] (336065197) at (0.333,2.4) {};
            \node[circle, fill, inner sep=0pt, minimum size = 1mm] (336065198) at (0.667,2.4) {};
            \draw (336065196) to[bend left=65] (336065198);

            \draw (0.2,0.2) to node {$s_1$} (-.566,1.0);
            \draw (0.5,0.2) to node[swap] {$s_2$} (1.26,1.0);
            \draw (-.566,1.4) to node {$s_2$} (0.2,2.2);
            \draw (1.26,1.4) to node[swap] {$s_1$} (0.5,2.2);
        \end{tikzpicture}
    \end{center}
    The reduced involution words of the maximal element $(1\,3)$ are $\bf 12$ and $\bf 21$.
\end{ex}

Just as in weak Bruhat order on $S_n$, the operation $z \mapsto z \ast s_i$ moves up or down in involution order according to whether $i$ is an ascent or descent of $z$.
\begin{prop}[\cite{hultman-twisted-involutions}, Lemma 3.8] \label{prop:ascent-descent} If $z(i) > z(i+1)$ then $z \ast s_i < z$, and if $z(i) < z(i+1)$ then $z \ast s_i > z$. \end{prop}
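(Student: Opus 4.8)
The plan is to argue directly from the definition of $z \ast s_i$, splitting into the two cases according to whether $s_i$ commutes with $z$, and in each case to compute $\ell(z \ast s_i)$ using the two standard facts about Coxeter length: $\ell(w s_i) = \ell(w) + 1$ if $w(i) < w(i+1)$ and $\ell(w) - 1$ otherwise, and $\ell(s_i w) = \ell(w) + 1$ if $w^{-1}(i) < w^{-1}(i+1)$ and $\ell(w) - 1$ otherwise. Since $z$ is an involution, $z^{-1} = z$, so the second criterion also reads as a comparison of $z(i)$ with $z(i+1)$, just like the first.

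First I would record the elementary fact that $s_i z = z s_i$ if and only if $z$ stabilizes $\{i, i+1\}$ setwise, i.e. $\{z(i), z(i+1)\} = \{i, i+1\}$; for an involution this means that either $i$ and $i+1$ are both fixed points of $z$, or $z$ contains the $2$-cycle $(i\,\,i{+}1)$. In the first situation $z \ast s_i = z s_i$ with $z(i) = i < i+1 = z(i+1)$, so $\ell(z \ast s_i) = \ell(z) + 1$; in the second $z \ast s_i = z s_i$ with $z(i) = i+1 > i = z(i+1)$, so $\ell(z \ast s_i) = \ell(z) - 1$. Either way the proposition holds when $s_i$ commutes with $z$.

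The substantive case is $s_i z \neq z s_i$, where $z \ast s_i = s_i z s_i$ and, by the fact above, $\{z(i), z(i+1)\} \neq \{i, i+1\}$. Here I would write $s_i z s_i = s_i (z s_i)$ and apply the second standard fact to $v = z s_i$, whose inverse is $s_i z$: the sign of $\ell(s_i v) - \ell(v)$ is governed by whether $s_i(z(i))$ is less than or greater than $s_i(z(i+1))$. Since $s_i$ transposes $i$ and $i+1$ and fixes every other value, it preserves the relative order of any two distinct values whose unordered pair is not $\{i, i+1\}$; as $\{z(i), z(i+1)\} \neq \{i, i+1\}$, we get $s_i(z(i)) < s_i(z(i+1))$ exactly when $z(i) < z(i+1)$. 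Combined with $\ell(z s_i) = \ell(z) \pm 1$, the sign matching the comparison of $z(i)$ and $z(i+1)$, this yields $\ell(s_i z s_i) = \ell(z) + 2$ when $z(i) < z(i+1)$ and $\ell(s_i z s_i) = \ell(z) - 2$ when $z(i) > z(i+1)$. Finally, to upgrade the length statement $\ell(z) < \ell(z \ast s_i)$ to the relation $z < z \ast s_i$ in weak order in the ascent case, one notes that $(z \ast s_i) \ast s_i = z$ directly from the definition of $\ast$, so $z \ast s_i$ indeed covers $z$.

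The whole argument is essentially bookkeeping; the only step needing a moment's care is the equivalence $s_i z = z s_i \iff \{z(i), z(i+1)\} = \{i, i+1\}$, together with the observation that $s_i$ reverses the order of a pair of values only when that pair is exactly $\{i, i+1\}$. I do not anticipate any genuine obstacle beyond checking these routine facts.
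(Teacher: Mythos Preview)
Your argument is correct. The case split according to whether $s_i$ commutes with $z$ is the natural one, and the key observation in the non-commuting case---that $s_i$ preserves the relative order of $z(i)$ and $z(i+1)$ precisely because $\{z(i),z(i+1)\} \neq \{i,i+1\}$---is exactly what makes the two length changes add rather than cancel. The final remark that $(z \ast s_i) \ast s_i = z$ is needed and easily checked in all cases, so the weak-order covering relation follows.

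There is nothing to compare against in the paper: the proposition is simply quoted from \cite{hultman-twisted-involutions} with no proof given here. Your self-contained argument is a welcome addition.
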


Write $\kappa(z)$ for the number of 2-cycles in an involution $z$, and define $\I_{p,q} = \{z \in \I_{p+q} : \kappa(z) \leq \min(p,q)\}$. Let $w_{p,q}$ be the involution $(1\,\,n)(2\,\,n{-}1)\cdots (m\,\,n{-}m{+}1)$ where $m = \min(p,q)$, so $w_{p,q} = \iota(\gamma_{p,q})$.
\begin{lem} \label{lem:max-element}
    The set $\I_{p,q}$ has $w_{p,q}$ as its unique maximal element in involution weak order.
\end{lem}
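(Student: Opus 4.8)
The plan is to prove that every element of $\I_{p,q}$ other than $w_{p,q}$ is non-maximal in involution weak order; since $\I_{p,q}$ is a finite nonempty poset (it contains the identity) it then has at least one maximal element, which must be $w_{p,q}$, and there can be no other. Note first that $w_{p,q}\in\I_{p,q}$ since $\kappa(w_{p,q})=m=\min(p,q)$.

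Two preliminary observations set up the argument. First, a permutation with no ascent is the reverse permutation $w_0$; and if $w_0\in\I_{p,q}$ then $\lfloor n/2\rfloor=\kappa(w_0)\le m\le n/2$ forces $m=\lfloor n/2\rfloor$, so $w_0=w_{p,q}$. Hence any $z\in\I_{p,q}$ with $z\neq w_{p,q}$ has an ascent $i$ (a position with $z(i)<z(i+1)$), and then $z\ast s_i>z$ by Proposition~\ref{prop:ascent-descent}. Second, for an involution $z$ and an ascent $i$, the move $z\ast s_i$ equals $zs_i$ (adding a two-cycle) exactly when $z$ commutes with $s_i$, which for an ascent forces $z(i)=i$ and $z(i+1)=i+1$; otherwise $z\ast s_i=s_izs_i$ has the same cycle type as $z$. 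Consequently, if $\kappa(z)<m$ then moving up at any ascent keeps $\kappa\le m$, and if $\kappa(z)=m$ then moving up at any ascent that is \emph{not} a pair of adjacent fixed points preserves $\kappa=m$; in both cases $z\ast s_i\in\I_{p,q}$, so $z$ is not maximal.

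It remains to rule out the case $\kappa(z)=m$ with every ascent of $z$ a pair of adjacent fixed points, by showing this forces $z=w_{p,q}$ — this is the combinatorial heart of the lemma and the step I expect to take the most care. The idea: if every ascent is a fixed-point pair, then along any maximal run of consecutive non-fixed points the entries of $z$ must decrease (an increase there would be an ascent at a non-fixed position), and one checks the values also decrease across the fixed points separating two runs, so $z$ restricted to its support $S$ is strictly decreasing, hence is the order-reversing bijection of $S$. Then examining the positions just inside each end of a maximal block of $S$ — where a non-fixed point sits next to a fixed point or the boundary — shows that such a configuration produces a forbidden non-fixed-pair ascent unless $S=\{1,\dots,m\}\cup\{n-m+1,\dots,n\}$, which leaves exactly $z=w_{p,q}$. (Alternatively one can induct on $m$: the same local analysis shows $z(1)=n$ and $z(n)=1$, and deleting these two points and relabelling gives an involution in $\I_{p-1,q-1}$ satisfying the same hypotheses.) Combining with the previous paragraph, every $z\neq w_{p,q}$ in $\I_{p,q}$ is non-maximal, which completes the proof; in particular $w_{p,q}$ itself, being the unique remaining candidate, is maximal.
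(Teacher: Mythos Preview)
Your argument is correct and follows essentially the same strategy as the paper: both reduce to showing that an involution $z$ with $\kappa(z)=m$ whose every ascent occurs at a pair of adjacent fixed points must equal $w_{p,q}$, and both carry out a direct structural analysis of such a $z$. The organizational differences are minor. For the case $\kappa(z)<m$ you observe directly that any ascent yields $z\ast s_i\in\I_{p,q}$, whereas the paper instead first proves the result on the level set $\{\kappa=m\}$ and then uses induction on $m$ to show the maximum of $\{\kappa\le m-1\}$ lies below $w_{p,q}$; your route is shorter. For the structural step your framing via the support $S$ and the observation that $z|_S$ is order-reversing is a clean repackaging of the paper's position-by-position analysis of the one-line notation. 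One small caution: the phrase ``produces a forbidden non-fixed-pair ascent'' is not quite how the contradiction arises---at a boundary between a fixed point $f$ and an adjacent $s\in S$ you get a forced \emph{descent}, and the contradiction comes from comparing the resulting inequality on $z(s)$ with the order-reversing description $z(s_k)=s_{2m+1-k}$; but the inductive alternative you sketch (showing $z(1)=n$, deleting $1$ and $n$, and recursing) goes through cleanly.
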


\begin{proof}
    Set $m = \min(p,q)$. First we prove the lemma with $\I_{p,q} = \{z \in \I_n : \kappa(z) \leq m\}$ replaced by $\{z \in \I_n : \kappa(z) = m\}$; call the latter set $J$. Suppose $z$ is maximal in $J$. By Proposition~\ref{prop:ascent-descent} this is equivalent to the condition that if $z(i) < z(i+1)$, then $z \ast s_i \notin J$, which can only happen if $z \ast s_i$ has $\kappa(z)+1$ cycles, i.e. if $z(i) = i$ and $z(i+1) = i+1$. Letting $i$ and $j$ be such that
    \begin{equation*}
        z(1) > \cdots > z(i-1) > z(i) < z(i+1) < \cdots < z(j-1) > z(j),
    \end{equation*}
    it follows that $i, i+1, \ldots, j-1$ are all fixed points.  We have $z(j) < z(j-1) = j-1$, and $z(j)$ is none of $\{z(j-1), \ldots, z(i)\} = \{j-1, \ldots, i\}$, so it must be one of $i-1, \ldots, 2, 1$ (assuming $z(j)$ exists). But the one-line notation of $z$ must end with $(i-1)\cdots 21$: otherwise, $z$ would have an ascent beginning with one of $1, 2, \ldots, i-1$, which would contradict maximality of $z$ since those are not fixed points. This completely determines $z$:
    \begin{align*}
        z &= n(n{-}1)\cdots (n{-}i{+}2)i(i{+}1)\cdots (n{-}i{+}1)(i{-}1)\cdots 21\\
        &= (1\,\,n)(2\,\,n{-}1)\cdots (i{-}1\,\,n{-}i{+}2)\\
        &= (1\,\,n)(2\,\,n{-}1)\cdots (m\,\,n{-}m{+}1) = w_{p,q} \qquad \text{(given that $\kappa(z) = m$)}
    \end{align*}

    Now let us see that $w_{p,q}$ is also the unique maximal element of $\{z \in \I_n : \kappa(z) \leq m\}$. By induction on $m$, we can assume $y := (1\,\,n)(2\,\,n-1)\cdots (m{-}1\,\,n{-}m{+}2)$ is the unique maximal element of $\{z \in \I_n : \kappa(z) \leq m-1\}$, and it is enough to show that $y < w_{p,q}$. But $y \ast s_m = (1\,\,n)(2\,\,n-1)\cdots (m{-}1\,\,n{-}m{+}2)(m\,\,m{+}1)$ is in $J$, so $y < y \ast s_m \leq w_{p,q}$ by the previous paragraph.
\end{proof}
 
\newpage

\begin{prop} \label{prop:clan-vs-inv} \hfill
    \begin{enumerate}[(a)]
        \item If $\gamma \ast s_i$ is defined, then $\iota(\gamma \ast s_i) = \iota(\gamma) \ast s_i$.
        \item If $\gamma \ast s_i < \gamma$, then $\iota(\gamma \ast s_i) > \iota(\gamma)$.
        \item Let $\gamma \in \Clan_{p,q}$ and suppose $i$ is a descent of $\iota(\gamma)$.
        \begin{itemize}
            \item If $\iota(\gamma) \ast s_i = s_i\iota(\gamma)s_i$, there is a unique $\gamma' > \gamma$ in $\Clan_{p,q}$ such that $\gamma' \ast s_i = \gamma$;
            \item If $\iota(\gamma) \ast s_i = \iota(\gamma)s_i$, there are exactly two such $\gamma'$. 
        \end{itemize}
        \item $\iota : \Clan_{p,q} \to \I_n$ is an order-reversing map with image $\I_{p,q}$.
     \end{enumerate}
\end{prop}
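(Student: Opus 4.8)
The plan is to establish (a) first and then deduce (b), (c), (d) from it together with Proposition~\ref{prop:ascent-descent}.

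\emph{Part (a).} I would match the clauses defining $\gamma \ast s_i$ against the two cases defining $z \ast s_i$ for an involution $z$, using the standard fact that $s_i z = z s_i$ precisely when $\{i, i{+}1\}$ is a union of cycles of $z$, i.e. either both $i$ and $i{+}1$ are fixed or $(i\,\,i{+}1)$ is a $2$-cycle. If $i$ and $i{+}1$ are fixed points of opposite sign in $\gamma$, then $\gamma \ast s_i$ adjoins the $2$-cycle $(i\,\,i{+}1)$ to $\iota(\gamma)$; since $\iota(\gamma)$ fixes $i$ and $i{+}1$ we have $\iota(\gamma)\ast s_i = \iota(\gamma)s_i$, which adjoins the same $2$-cycle, so the two agree. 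In the remaining (defined) case $\gamma \ast s_i = s_i\gamma s_i$, so $\iota(\gamma \ast s_i) = s_i \iota(\gamma) s_i$; here $\{i, i{+}1\}$ is neither a matched pair nor a pair of fixed points of $\gamma$, hence not a union of cycles of $\iota(\gamma)$, so $s_i\iota(\gamma)\neq \iota(\gamma)s_i$ and therefore $\iota(\gamma)\ast s_i = s_i \iota(\gamma) s_i$ as well.

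\emph{Parts (b) and (c).} By the definition of clan weak order, $\gamma \ast s_i < \gamma$ means $\ell(\iota(\gamma \ast s_i)) > \ell(\iota(\gamma))$, which by (a) says $\ell(\iota(\gamma)\ast s_i) > \ell(\iota(\gamma))$; Proposition~\ref{prop:ascent-descent} then forces $i$ to be an ascent of $\iota(\gamma)$, so $\iota(\gamma \ast s_i) = \iota(\gamma)\ast s_i > \iota(\gamma)$, proving (b). For (c), fix a descent $i$ of $z := \iota(\gamma)$; then $i$ and $i{+}1$ are not both fixed points of $\gamma$. If $s_i z = z s_i$, then being a descent forces $(i\,\,i{+}1)$ to be a $2$-cycle of $z$, i.e. $i$ and $i{+}1$ are matched in $\gamma$. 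Conjugation by $s_i$ can neither create nor destroy a matching at $\{i,i{+}1\}$, so the only $\gamma'$ with $\gamma' \ast s_i = \gamma$ arise from the first clause of the definition of $\ast$: they are exactly the two clans obtained from $\gamma$ by replacing the arc $\{i,i{+}1\}$ with $i^+,(i{+}1)^-$ or with $i^-,(i{+}1)^-$ (both genuine $(p,q)$-clans, since a matched pair and a $\pm$ pair have the same effect on signed counts), and each lies above $\gamma$ because its underlying involution is $zs_i = z\ast s_i < z$. If instead $s_i z \ne z s_i$, then $i$ and $i{+}1$ are not matched to each other, so the first clause is impossible and $\gamma' \ast s_i = s_i\gamma' s_i = \gamma$, forcing $\gamma' = s_i\gamma s_i$; conversely $s_i\gamma s_i$ is a clan, $(s_i\gamma s_i)\ast s_i = \gamma$ via the last clause, and $\gamma < s_i\gamma s_i$ since $\iota(s_i\gamma s_i) = s_i z s_i = z\ast s_i < z$. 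This is the unique $\gamma'$.

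\emph{Part (d).} Clan weak order is the transitive closure of the relations $\gamma \ast s_i < \gamma$, each of which maps under $\iota$ to $\iota(\gamma)\ast s_i > \iota(\gamma)$ by (a) and (b); extending by transitivity shows $\iota$ is order-reversing. For the image, a signed count shows that a $(p,q)$-clan with $c$ arcs has exactly $p-c$ plus-signs and $q-c$ minus-signs (from $a-b = p-q$ and $a+b+2c = n$), so $\kappa(\iota(\gamma)) = c \le \min(p,q)$ and $\iota(\Clan_{p,q}) \subseteq \I_{p,q}$; conversely, given $z \in \I_{p,q}$ with $\kappa(z) = c$, taking the $2$-cycles of $z$ as arcs and distributing $p-c$ plus-signs and $q-c$ minus-signs over the $n-2c$ fixed points produces a $(p,q)$-clan mapping to $z$. (As a consistency check, $\iota(\gamma_{p,q}) = w_{p,q}$, so the minimum of $\Clan_{p,q}$ maps to the maximum of $\I_{p,q}$ from Lemma~\ref{lem:max-element}, as order-reversal predicts.) I expect the main obstacle to be the (routine but delicate) bookkeeping in (c) — pairing the clauses of the definition of $\ast$ with the possible local structures at $\{i,i{+}1\}$ and using that conjugation preserves the matched pair $\{i,i{+}1\}$ — while (a), the length comparisons, and (d) are short once (a) is in place.
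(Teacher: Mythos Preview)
Your proof is correct and follows essentially the same approach as the paper: establish (a) by matching the two defined cases of the clan operation against the two cases of the involution operation, derive (b) and (d) from (a) plus the length definition of clan weak order, and handle (c) by the same case split on whether $s_i$ commutes with $\iota(\gamma)$. Aside from a harmless typo in (c) (the second preimage should have signs $i^-,(i{+}1)^+$, not $i^-,(i{+}1)^-$), your argument matches the paper's almost verbatim, with only cosmetic differences such as treating the two subcases of (c) in the opposite order.
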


\begin{proof} \hfill
    \begin{enumerate}[(a)]
        \item The cases in which $\gamma \ast s_i$ is defined are: (1) if $i$ and $i+1$ are fixed points of $\gamma$ of opposite sign, then $\gamma \ast s_i$ is obtained from $\gamma$ by making $i$ and $i+1$ matched; (2) if $\{i,i+1\}$ is not $\iota(\gamma)$-invariant, then $\gamma \ast s_i = s_i \gamma s_i$. In case (1) $\iota(\gamma \ast s_i) = \iota(\gamma) s_i$ and $s_i$ commutes with $\iota(\gamma)$, while in case (2) $\iota(\gamma \ast s_i) = s_i\iota(\gamma)s_i \neq \iota(\gamma)$, so in either case we get $\iota(\gamma \ast s_i) = \iota(\gamma) \ast s_i$.

        \item The relation $\gamma \ast s_i < \gamma$ implies $\ell(\iota(\gamma \ast s_i)) > \ell(\iota(\gamma))$, and by part (a) this is the same as $\ell(\iota(\gamma) \ast s_i) > \ell(\iota(\gamma))$, which means $\iota(\gamma) \ast s_i > \iota(\gamma)$ in weak order on $\I_n$.

        \item Suppose $\gamma'$ is such that $\gamma' \ast s_i = \gamma$.  Because $i$ is a descent of $\iota(\gamma)$, Proposition~\ref{prop:ascent-descent} implies that $\ell(\iota(\gamma')) = \ell(\iota(\gamma) \ast s_i) < \ell(\iota(\gamma))$ (using part (a)), so that $\gamma' > \gamma$.
        
        If $\iota(\gamma) \ast s_i = s_i\iota(\gamma)s_i$ then $i, i+1$ are not matched by $\gamma$ and they are not both fixed points, so the same is true of $\gamma'$. In that case, $\gamma' \ast s_i$ is defined as $s_i \gamma' s_i$, forcing $\gamma' = s_i \gamma s_i$.
        
        If $\iota(\gamma) \ast s_i = \iota(\gamma)s_i$, then $i$ and $i+1$ are matched by $\gamma$ (they cannot be fixed points since $i$ is a descent of $\iota(\gamma)$). Thus $\gamma'$ and $\gamma$ agree on $[n] \setminus \{i,i+1\}$, and $i,i+1$ must be fixed points of $\gamma'$ labeled $-+$ or $+-$ in order to have $\gamma' \ast s_i = \gamma$.

        \item Part (b) shows that $\iota$ is order-reversing. An involution $z \in S_n$ has $n - 2\kappa(z)$ fixed points, and constructing $\gamma \in \Clan_{p,q}$ with $\iota(\gamma) = z$ is equivalent to choosing $a$ of those fixed points to label $+$ and $b$ of them to label $-$, subject to the constraints $a+b = n - 2\kappa(z)$ and $a-b = p-q$. This gives $a = p-\kappa(z)$ and $b = q-\kappa(z)$, so $z \in \iota(\Clan_{p,q})$ if and only if $\kappa(z) \leq \min(p,q)$. In fact, we get the stronger result that
        \begin{equation*}
            \#\{\gamma \in \Clan_{p,q} : \iota(\gamma) = z\} = {n - 2\kappa(z) \choose p-\kappa(z)} = {n - 2\kappa(z) \choose q-\kappa(z)}.
        \end{equation*}
        
        
    \end{enumerate}
\end{proof}

\begin{lem} \label{lem:clan-chains-vs-inv-chains} Suppose $C$ is a saturated chain $1 = z^0 < z^1 < z^2 < \cdots < z^r = z$ in $\I_{p,q}$.
\begin{enumerate}[(a)]
    \item  For a fixed $\gamma \in \Clan_{p,q}$ with $\iota(\gamma) = z$, there are exactly $2^{\kappa(z)}$ saturated chains in $\Clan_{p,q}$ with minimal element $\gamma$ whose image under $\iota$ is $C$.
\item  The total number of saturated chains in $\Clan_{p,q}$ with image $C$ is
\begin{equation*}
    {n - 2\kappa(z) \choose p-\kappa(z)}2^{\kappa(z)} = {n - 2\kappa(z) \choose q-\kappa(z)}2^{\kappa(z)}.
\end{equation*}

\end{enumerate}\end{lem}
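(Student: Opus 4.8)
The plan is to prove (a) by lifting $C$ to a clan chain one covering at a time, counting the available choices at each step, and then to obtain (b) by summing over the fibre $\iota^{-1}(z)$.

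For (a) I would first pin down the direction. If $\gamma = \delta^0 \lessdot \delta^1 \lessdot \cdots \lessdot \delta^s$ is any saturated chain in $\Clan_{p,q}$ in which $\delta^{j-1}\lessdot\delta^j$ carries label $s_i$, then Proposition~\ref{prop:clan-vs-inv}(a),(b) turn it into a saturated chain $\iota(\delta^s)\lessdot\cdots\lessdot\iota(\delta^0)$ in $\I_{p,q}$, with $\iota(\delta^j)\lessdot\iota(\delta^{j-1})$ again labelled $s_i$ (because $\iota(\delta^{j-1}) = \iota(\delta^j)\ast s_i$). So requiring the image to be $C$ forces $s=r$, $\iota(\delta^j)=z^{r-j}$ for all $j$, and in particular $\iota(\gamma)=z^r=z$ (with $\delta^r$ matchless). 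Conversely, fixing $\gamma$ with $\iota(\gamma)=z$, I build the lift inductively: given $\delta^0,\ldots,\delta^{j-1}$ with $\iota(\delta^{j-1})=z^{r-j+1}$, the term $\delta^j$ must cover $\delta^{j-1}$ with $\delta^j\ast s_c=\delta^{j-1}$, where $s_c$ is the label of the covering $z^{r-j}\lessdot z^{r-j+1}$ in $C$; then $c$ is a descent of $\iota(\delta^{j-1})$, so Proposition~\ref{prop:clan-vs-inv}(c) applies verbatim and gives exactly one or two such $\delta^j$, each automatically satisfying $\iota(\delta^j)=z^{r-j+1}\ast s_c=z^{r-j}$. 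The count is two precisely when $z^{r-j+1}\ast s_c=z^{r-j+1}s_c$, i.e.\ when $(c\,\,c{+}1)$ is a $2$-cycle of $z^{r-j+1}$, which happens exactly when $\kappa(z^{r-j})=\kappa(z^{r-j+1})-1$; in the remaining case $z^{r-j}=s_cz^{r-j+1}s_c$, $\kappa$ is unchanged, and there is a unique choice.

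Multiplying these local counts over $j=1,\ldots,r$ shows that the number of lifts of $C$ based at $\gamma$ is $2^t$, where $t$ is the number of coverings of $C$ (read from $z^0$ up to $z^r$) along which $\kappa$ strictly increases. Since any covering of involution weak order changes $\kappa$ by $0$ (when $z\ast s_i=s_izs_i$) or by $+1$ (when $z\ast s_i=zs_i$, which forces $i,i{+}1$ to be fixed points, creating the $2$-cycle $(i\,\,i{+}1)$), and since $\kappa(z^0)=\kappa(\id)=0$ while $\kappa(z^r)=\kappa(z)$, we get $t=\kappa(z)$; this proves (a).

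For (b), every saturated chain in $\Clan_{p,q}$ with image $C$ has as its minimal element some $\gamma$ with $\iota(\gamma)=z$, and distinct such $\gamma$ give disjoint families of chains. By (a) each is the base of exactly $2^{\kappa(z)}$ of these chains, and by Proposition~\ref{prop:clan-vs-inv}(d) there are ${n-2\kappa(z)\choose p-\kappa(z)}={n-2\kappa(z)\choose q-\kappa(z)}$ clans $\gamma$ with $\iota(\gamma)=z$, so summing yields the asserted total. The only delicate point is the bookkeeping in (a): keeping the orientation straight under the order-reversing map $\iota$ (a lift climbs in $\Clan_{p,q}$ as $C$ is read downward from $z^r$), and matching the ``two choices'' case of Proposition~\ref{prop:clan-vs-inv}(c) exactly with those coverings of $C$ that create a new $2$-cycle; beyond this I anticipate no real obstacle.
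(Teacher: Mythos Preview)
Your proposal is correct and follows essentially the same route as the paper: lift $C$ step by step using Proposition~\ref{prop:clan-vs-inv}(c), observe that the two-choice steps are exactly those where $\kappa$ increases, telescope to get $2^{\kappa(z)}$, and then sum over the fibre $\iota^{-1}(z)$ for part (b). Your write-up is in fact more careful than the paper's about the orientation reversal under $\iota$ and about why the ``commuting'' case of Proposition~\ref{prop:clan-vs-inv}(c) coincides with $(c\,\,c{+}1)$ being a $2$-cycle of $z^{r-j+1}$ (using that $c$ is a descent rules out the fixed-point case); the paper compresses all of this into a single sentence citing Proposition~\ref{prop:clan-vs-inv}(a,c). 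One minor citation point: the fibre count $\binom{n-2\kappa(z)}{p-\kappa(z)}$ appears in the \emph{proof} of Proposition~\ref{prop:clan-vs-inv}(d) rather than in its statement, so you should cite it that way.
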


\begin{proof}
    Part (b) follows from (a) because the number of $\gamma \in \Clan_{p,q}$ such that $\iota(\gamma) = z$ is ${n - 2\kappa(z) \choose p-\kappa(z)}$, as per the proof of Proposition~\ref{prop:clan-vs-inv}(d).

    As for part (a),  let $k$ be the number of covering relations $z^{j} < z^{j+1}$ in the chain $z^0 < z^1 < \cdots < z^r = z$ for which $z^{j+1} = z^{j}s_i$ for some $i$ (as opposed to $z^{j+1} = s_i z^j s_i$). Proposition~\ref{prop:clan-vs-inv}(c,a) show that the number of saturated chains in $\Clan_{p,q}$ with image $C$ and minimal element $\gamma$ is $2^k$. But the number $k$ is $\kappa(z)$ for any saturated chain from $1$ to $z$, because
    \begin{equation*}
        \kappa(z \ast s_i) = 
        \begin{cases}
            \kappa(z) & \text{if $z \ast s_i = s_i z s_i$}\\
            \kappa(z)+1 & \text{if $z \ast s_i = z s_i$ and $\ell(zs_i) > \ell(z)$}
        \end{cases}.
    \end{equation*}
\end{proof}
Because $\iota$ is order-reversing, Lemma~\ref{lem:clan-chains-vs-inv-chains} does not in general relate reduced words for $\gamma \in \Clan_{p,q}$ to reduced involution words for $\iota(\gamma)$. However, it does when $z = w_{p,q}$ is maximal in $\I_{p,q}$.

\begin{cor} The number of maximal chains in the poset $\Clan_{p,q}$ is $2^{\min(p,q)} \#\hat\Red(w_{p,q})$. \end{cor}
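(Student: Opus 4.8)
\emph{Proof sketch.} Set $m = \min(p,q)$. The plan is to count maximal chains of $\Clan_{p,q}$ by pushing them through the order-reversing map $\iota : \Clan_{p,q} \to \I_{p,q}$ of Proposition~\ref{prop:clan-vs-inv}(d) and applying Lemma~\ref{lem:clan-chains-vs-inv-chains} with $z = w_{p,q}$. The first point is that $\gamma_{p,q}$ is the \emph{unique} $(p,q)$-clan with underlying involution $w_{p,q}$: by the proof of Proposition~\ref{prop:clan-vs-inv}(d) the number of such clans is $\binom{n - 2\kappa(w_{p,q})}{p - \kappa(w_{p,q})}$, and since $\kappa(w_{p,q}) = m$ this equals $\binom{|p-q|}{p-m}$, which is $1$ because $p - m$ is either $0$ or $|p-q|$. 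Recall also that $\gamma_{p,q}$ is the minimum of $\Clan_{p,q}$ and that the matchless clans (those with underlying involution the identity) are its maximal elements. Hence a saturated chain of $\Clan_{p,q}$ whose $\iota$-image is a fixed saturated chain $C$ from the identity up to $w_{p,q}$ must start at $\gamma_{p,q}$ and end at a matchless clan, so it is a maximal chain; conversely, by Proposition~\ref{prop:clan-vs-inv}(a,b) the image under $\iota$ of any maximal chain of $\Clan_{p,q}$, regarded (after reversing order, since $\iota$ is order-reversing) as a chain from $1$ up to $w_{p,q}$, is such a chain $C$. So maximal chains of $\Clan_{p,q}$ are exactly the saturated chains with $\iota$-image a saturated chain from $1$ to $w_{p,q}$ in $\I_{p,q}$.

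Next I would invoke Lemma~\ref{lem:clan-chains-vs-inv-chains}(b) with $z = w_{p,q}$: each such $C$ is the $\iota$-image of exactly $\binom{n - 2m}{p - m}\, 2^m = 2^m$ saturated chains of $\Clan_{p,q}$, that is, of exactly $2^m$ maximal chains. Summing over all $C$, the number of maximal chains of $\Clan_{p,q}$ equals $2^m$ times the number of saturated chains from $1$ to $w_{p,q}$ in $\I_{p,q}$. To finish, I would note that $\kappa$ is weakly increasing along upward paths in involution weak order (as recorded in the proof of Lemma~\ref{lem:clan-chains-vs-inv-chains}) and $\kappa(w_{p,q}) = m$, so every involution occurring on a saturated chain from $1$ to $w_{p,q}$ in $\I_n$ has at most $m$ two-cycles and already lies in $\I_{p,q}$. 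Therefore the saturated chains from $1$ to $w_{p,q}$ in $\I_{p,q}$ coincide with those in $\I_n$, and by definition these are enumerated by $\hat\Red(w_{p,q})$. Combining, the number of maximal chains of $\Clan_{p,q}$ is $2^{m}\,\#\hat\Red(w_{p,q}) = 2^{\min(p,q)}\,\#\hat\Red(w_{p,q})$.

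I do not expect a genuine obstacle here; the argument rests entirely on Lemma~\ref{lem:clan-chains-vs-inv-chains}, and the only things requiring care are the direction-reversal bookkeeping through $\iota$ (matching ``minimum of the clan chain'' with ``top of the involution chain''), the elementary evaluation $\binom{n-2m}{p-m} = 1$, and the observation that an upward saturated chain in $\I_n$ terminating at $w_{p,q}$ never leaves $\I_{p,q}$ — all of which follow directly from results already in place.
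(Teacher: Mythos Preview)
Your proposal is correct and follows the same approach the paper has in mind: apply Lemma~\ref{lem:clan-chains-vs-inv-chains} with $z=w_{p,q}$, use that $\iota$ is order-reversing so maximal chains in $\Clan_{p,q}$ correspond to saturated chains from $1$ to $w_{p,q}$ in $\I_{p,q}$, and evaluate the binomial coefficient as $1$. The paper states the corollary without an explicit proof, so your write-up simply makes explicit the details (the direction-reversal bookkeeping, the computation $\binom{n-2m}{p-m}=1$, and the observation that such chains stay inside $\I_{p,q}$) that the paper leaves to the reader.
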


We can go further using known results for involution words.
\begin{defn}
    The \emph{involution Stanley symmetric function} of $z \in \I_n$ is
    \begin{equation*}
        \hat F_z = \lim_{m \to \infty} \sum_{a \in \bf\hat\Red(z^{+m})} \sum_{b \in \comp(a)} x_{b_1} \cdots x_{b_{\ell}}.
    \end{equation*}
\end{defn}
Just as for clans, the set $\hat \Red(z)$ is closed under the Coxeter relations for $S_n$ \cite[3.16]{richardson-springer}, so can be written as a disjoint union $\bigcup_{w \in \A(z)} \Red(w)$ over some set $\A(z) \subseteq S_n$. This implies that $\hat F_z = \sum_{w \in \A(z)} F_w$, so $\hat F_z$ is indeed a symmetric function.

\begin{defn} \label{def:shifted} A partition $\lambda$ is \emph{strict} if $\lambda_1 > \lambda_2 > \cdots > \lambda_{\ell}$, and the \emph{shifted shape} of a strict $\lambda$ is the set of boxes $\{(i,j) : \text{$1 \leq i \leq \ell(\lambda)$ and $i \leq j \leq i+\lambda_i-1$}\}$ in matrix coordinates. A filling of a shifted shape by the alphabet $\{1' < 1 < 2' < 2 < \cdots\}$ is a \emph{marked shifted semistandard tableau} if:
    \begin{itemize}
        \item Its entries are weakly increasing across rows and down columns;
        \item No unprimed (resp. primed) number appears twice in a column (resp. row);
        \item There are no primed numbers on the main diagonal.
    \end{itemize}
    The \emph{Schur P-function} of shifted shape $\lambda$ is $P_{\lambda} = \sum_{T} x^T$ where $T$ runs over marked shifted semistandard tableaux of shape $\lambda$. Here $x^T$ is the monomial in which the power of $x_i$ is the number of entries $i$ and $i'$ in $T$. The \emph{Schur Q-function} $Q_{\lambda}$ is then defined to be $2^{\ell(\lambda)}P_{\lambda}$. These are both symmetric functions \cite[III \sectionsymbol 8]{macdonald}.
\end{defn}

\begin{ex} \newcommand{\twoprime}{2'} \newcommand{\sixprime}{6'}
    Here is a marked shifted semistandard tableau of shifted shape $(5,4,1)$:
    \begin{equation*}
        \young(1\twoprime 22\sixprime,:245\sixprime,::5)
    \end{equation*}
\end{ex}

\begin{lem} \label{lem:dominant} Suppose $z = (1\,\,b_1)(2\,\,b_2)\cdots (k\,\,b_k)$ where $b_1 > \cdots > b_k > k$. Then $\hat F_z = P_{\lambda}$ where $\lambda = (b_1-1, b_2-2, \ldots, b_k-k)$.
\end{lem}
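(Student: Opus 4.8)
The starting point is the identity $\hat F_z = \sum_{w \in \A(z)} F_w$, so the claim is equivalent to $\sum_{w \in \A(z)} F_w = P_\lambda$. The plan is to exploit the fact that involution Stanley symmetric functions are Schur $P$-positive, i.e.\ $\hat F_z = \sum_\mu c_\mu P_\mu$ with $c_\mu \in \NN$ and $|\mu| = |\lambda|$ (the common length of every element of $\hat\Red(z)$), and then to show that a single term survives, with $\mu = \lambda$. Equivalently, $z$ is vexillary with shape $\lambda$.

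To pin down the shape, I would first compute the involution code of $z$. Writing $z$ in one-line notation, its cycles $(1\,b_1) \supset (2\,b_2) \supset \cdots \supset (k\,b_k)$ are totally nested because $b_1 > \cdots > b_k > k$, and from this one reads off that the symmetrized Rothe diagram of $z$ is exactly the Young diagram of $\lambda = (b_1-1, b_2-2, \ldots, b_k-k)$ — note this is already a partition, and in fact a strict one. In particular this identifies the ``dominant'' monomial $x_1^{\lambda_1}\cdots x_k^{\lambda_k}$ of $\hat F_z$: a direct check with compatible sequences (the only compatible sequence with this content forces the reduced word to be weakly decreasing on each block and bounded below block-by-block, and there is exactly one reduced word, of exactly one atom, meeting all these constraints) shows it occurs with coefficient $1$. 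Combined with the fact that every monomial of $\hat F_z$ is $\unlhd \lambda$ in dominance order (again a consequence of the diagram of $z$ being the Young diagram of $\lambda$), this forces $c_\mu = 0$ for all $\mu \rhd \lambda$ and then $c_\lambda = 1$, via $[m_\lambda]\hat F_z = c_\lambda + \sum_{\mu \rhd \lambda} c_\mu K'_{\lambda\mu}$ where $K'_{\mu\mu}=1$.

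The last and main step is to rule out $P_\mu$ with $\mu \lhd \lambda$, i.e.\ the true content of ``vexillarity.'' One route is to invoke the results of Hamaker--Marberg--Pawlowski on dominant/vexillary involutions directly: the diagram of $z$ being a Young diagram makes $z$ I-dominant, its involution Schubert polynomial is the monomial $x^\lambda$, and its stable limit $\hat F_z = \lim_m \hat{\mathfrak{S}}_{z^{+m}}$ is $P_\lambda$. A more self-contained route is induction on $k$: describe $\A(z)$ explicitly by an insertion-type algorithm that resolves the nested cycles one at a time, so that the atoms of $z$ are parametrized by fillings in bijection with the standard marked shifted tableaux of shape $\lambda$, and then recognize $\sum_{w \in \A(z)} F_w = \sum_{w \in \A(z)} s_{\lambda(w)}$ as precisely Stembridge's Schur expansion of $P_\lambda$. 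The main obstacle in either approach is exactly this collapse to a single Schur $P$-function; once the atom set (equivalently, the vexillarity of $z$) is under control, the identity $\hat F_z = P_\lambda$ drops out of the normalization at the dominant monomial established in the previous paragraph.
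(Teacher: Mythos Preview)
Your overall strategy matches the paper's: use Schur $P$-positivity of $\hat F_z$ (from \cite{HMP4}), identify the leading term as $P_\lambda$ via the involution Rothe diagram, and then argue that only this one term survives. The paper carries out the first two steps by citing \cite[Corollary 4.42]{HMP4}, which says the dominance-leading term of $\hat F_y$ is $P_{\mu^t}$ where $\mu$ records the row lengths of $D(y) = \{(i,j) : j > i,\ z(j) < z(i),\ z(j) \leq i\}$; for the given $z$ one checks $D(z)$ is the transpose of the shifted shape $\lambda$.

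Where the paper is sharper than your outline is precisely at the step you flag as ``the main obstacle.'' Rather than invoking I-dominance or building an explicit atom bijection, the paper cites \cite[Theorem 4.67]{HMP4}: $2^{\kappa(y)}\hat F_y$ equals a single Schur $Q$-function if and only if $y$ avoids the pattern $2143$. The involution $z = (1\,b_1)\cdots(k\,b_k)$ with $b_1 > \cdots > b_k > k$ is easily seen to be $2143$-avoiding, so the collapse to a single $P_\lambda$ is immediate. Your I-dominant route through the involution Schubert polynomial would also work and is in the same circle of results, but the pattern-avoidance criterion is the cleanest way to close the gap you identify; your inductive/bijective second route would require substantially more work to execute.
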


\begin{proof}
     For $y \in \I_n$, let $D(y) = \{(i,j) : \text{$j > i$, $z(j) < z(i)$, and $z(j) \leq i$}\}$, thought of as a subset of $[n] \times [n]$ in matrix coordinates. Let $\mu$ be the partition whose parts are the row lengths of $D(y)$. By \cite[Corollary 4.42]{HMP4}, $\hat F_y$ is a nonnegative integer combination of Schur $P$-functions, whose leading term in dominance order is $P_{\mu^t}$, where $\mu^t$ is the partition conjugate to $\mu$. For $z$ as defined above, one checks that $D(z)$ is the transpose of the shifted Young diagram of $\lambda = (b_1-1, \ldots, b_k-k)$, so the leading term of $\hat F_z$ is $P_{\lambda}$.

    Equivalently, the leading term of the Schur Q expansion of $2^{\kappa(z)}\hat F_z$ is $2^{\kappa(z)} P_{\lambda} = 2^k P_{\lambda} = Q_{\lambda}$. By \cite[Theorem 4.67]{HMP4}, $2^{\kappa(y)}\hat F_y$ equals a single Schur $Q$-function with coefficient $1$ if and only if $y$ is a $2143$-avoiding permutation, i.e. there are no $a < b < c < d$ such that $y(b) < y(a) < y(d) < y(c)$. This condition holds for $z$, so $2^{\kappa(z)}\hat F_z = Q_{\lambda}$, or $\hat F_z = P_{\lambda}$.
\end{proof}

\begin{thm} \label{thm:maximal-chain-count} Assume $p \geq q$ without loss of generality. Then
    \begin{equation*}
        \sum_{\substack{\gamma \in \Clan_{p,q} \\ \text{$\gamma$ matchless}} } F_\gamma = 2^q P_{(n-1,n-3,\ldots,n-2q+1)} = Q_{(n-1,n-3,\ldots,n-2q+1)}.
    \end{equation*}
    The number of maximal chains in $\Clan_{p,q}$ is
    \begin{equation*}
        2^{pq} (pq)!\left[\prod_{i = 1}^{q} \frac{(p+q-2i)!(p+q-2i+1)!}{(p-i)!(q-i)!}  \right]^{-1} = 2^{pq}{pq \choose n-1,n-3,\ldots,n-2q+1}\prod_{i=1}^q {p+q-2i \choose p-i,q-i}^{-1}.
    \end{equation*}
\end{thm}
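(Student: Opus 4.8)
The plan is to reduce both assertions to the single computation $\hat F_{w_{p,q}} = P_\lambda$ supplied by Lemma~\ref{lem:dominant}, and to bridge the clan side to the involution side using the order-reversing map $\iota$. First I would check that $w_{p,q} = (1\,\,n)(2\,\,n{-}1)\cdots(q\,\,n{-}q{+}1)$ (here $q = \min(p,q)$ since $p \geq q$) fits the hypothesis of Lemma~\ref{lem:dominant}: writing $w_{p,q} = (1\,b_1)\cdots(q\,b_q)$ with $b_i = n{-}i{+}1$, one has $b_1 > \cdots > b_q > q$ because $b_q = p{+}1 > q$. Hence $\hat F_{w_{p,q}} = P_\lambda$ for $\lambda = (b_1{-}1,\ldots,b_q{-}q) = (n{-}1,n{-}3,\ldots,n{-}2q{+}1)$, and $2^q\hat F_{w_{p,q}} = 2^{\ell(\lambda)}P_\lambda = Q_\lambda$.

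For the first displayed identity I would set $V = \bigcup_{\gamma\text{ matchless}}\A(\gamma)$ and argue in two steps. Step one: a word $a$ is a reduced word for some matchless clan if and only if its reversal lies in $\hat\Red(w_{p,q})$. This follows by applying $\iota$ to a saturated chain from $\gamma_{p,q}$ to a matchless clan, using Proposition~\ref{prop:clan-vs-inv}(a) to see the labels are preserved, and Proposition~\ref{prop:clan-vs-inv}(d) together with Lemma~\ref{lem:clan-chains-vs-inv-chains} to see that such chains project $2^q$-to-$1$ onto saturated chains from the identity up to $w_{p,q}$ in $\I_{p,q}$ (note $\gamma_{p,q}$ is the unique clan with involution $w_{p,q}$, since $\binom{n-2q}{p-q}=1$). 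Since reversal sends $\Red(v)$ to $\Red(v^{-1})$ and $\hat\Red(w_{p,q}) = \bigsqcup_{w\in\A(w_{p,q})}\Red(w)$, comparing the two expressions of this word set as a disjoint union of reduced-word sets forces $V^{-1} = \A(w_{p,q})$. Step two: by Theorem~\ref{thm:equivalence-relation-1} and Proposition~\ref{prop:labelled-shape-rules}, for $v \in V$ the shape $\ush(v)$ has exactly $q$ arcs, all of them marked (a matchless clan has no matchings), so $v$ is an atom of exactly $2^q$ clans, all matchless. Combining, and using the standard fact $F_v = F_{v^{-1}}$ for permutation Stanley symmetric functions,
\[
  \sum_{\gamma\text{ matchless}}F_\gamma \;=\; \sum_{\gamma\text{ matchless}}\sum_{v\in\A(\gamma)}F_v \;=\; \sum_{v\in V}2^q F_v \;=\; 2^q\sum_{v\in V}F_{v^{-1}} \;=\; 2^q\sum_{w\in\A(w_{p,q})}F_w \;=\; 2^q\hat F_{w_{p,q}} \;=\; Q_\lambda.
\]

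For the maximal chain count I would invoke the corollary to Lemma~\ref{lem:clan-chains-vs-inv-chains} stated just above, which gives the number of maximal chains as $2^q\#\hat\Red(w_{p,q})$. By the argument behind Proposition~\ref{prop:reduced-word-count} applied to $\hat F$, $\#\hat\Red(w_{p,q})$ equals the coefficient of $x_1\cdots x_{pq}$ in $\hat F_{w_{p,q}} = P_\lambda$, which by Definition~\ref{def:shifted} is the number of marked shifted standard tableaux of shape $\lambda$; forgetting primes (each of the $|\lambda|-\ell(\lambda) = pq-q$ off-diagonal boxes may be primed independently) shows this number is $2^{pq-q}g^\lambda$, where $g^\lambda$ denotes the number of ordinary standard shifted tableaux of shape $\lambda$. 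So the number of maximal chains is $2^q\cdot 2^{pq-q}g^\lambda = 2^{pq}g^\lambda$, confirming in passing that it is $2^q$ times the number of marked shifted standard tableaux of shape $\lambda$. Finally I would apply the classical product formula $g^\lambda = \frac{|\lambda|!}{\lambda_1!\cdots\lambda_q!}\prod_{i<j}\frac{\lambda_i-\lambda_j}{\lambda_i+\lambda_j}$ for strict $\lambda$: with $\lambda_i = n-2i+1$ one gets $\lambda_i-\lambda_j = 2(j-i)$ and $\lambda_i+\lambda_j = 2(n+1-i-j)$, the products telescope to $\prod_{i<j}(j-i) = \prod_{i=1}^q (q-i)!$ and $\prod_{i<j}(n+1-i-j) = \prod_{i=1}^q (n-2i)!/(p-i)!$, and one obtains $g^\lambda = (pq)!\prod_{i=1}^q \frac{(p-i)!(q-i)!}{(p+q-2i)!(p+q-2i+1)!}$. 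Multiplying by $2^{pq}$ gives the first stated formula; the second follows from $\binom{pq}{n-1,n-3,\ldots,n-2q+1} = (pq)!/\prod_i(n-2i+1)!$ and $\binom{p+q-2i}{p-i,q-i} = (n-2i)!/((p-i)!(q-i)!)$.

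I expect the main obstacle to be the middle paragraph: because $\iota$ reverses weak order, reduced words for the maximal (matchless) clans correspond to reduced involution words for the maximal element $w_{p,q}$ only after reversing words and accounting for a $2^q$-fold branching, so the delicate point is to verify cleanly that every element of $V$ is an atom of exactly $2^q$ matchless clans, matching the $2^q$ that appears in the corollary. By comparison, the symmetric-function bookkeeping and the hook-length evaluation of $g^\lambda$ are routine.
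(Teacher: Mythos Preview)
Your proposal is correct and follows the same overall route as the paper: reduce the symmetric function identity to $\hat F_{w_{p,q}}$ via the $2^q$-to-$1$ correspondence of Lemma~\ref{lem:clan-chains-vs-inv-chains}, identify $\hat F_{w_{p,q}}$ as $P_\lambda$ using Lemma~\ref{lem:dominant}, and then extract the coefficient of $x_1\cdots x_{pq}$ via a standard formula for $g^\lambda$.

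Two differences are worth noting. First, your middle paragraph is more careful than the paper's one-line justification: since $\iota$ reverses order, the label sequence of a maximal clan chain becomes the \emph{reversed} involution word, and you correctly account for this by passing through $V^{-1} = \A(w_{p,q})$ and invoking $F_v = F_{v^{-1}}$. Your separate verification (via Theorem~\ref{thm:equivalence-relation-1}) that each $v\in V$ lies in exactly $2^q$ matchless $\A(\gamma)$'s is a nice alternative way to see the $2^q$ on the clan side, independent of Lemma~\ref{lem:clan-chains-vs-inv-chains}. Second, for the explicit evaluation of $g^\lambda$ the paper uses the shifted hook length formula, whereas you use Schur's product formula $g^\lambda = \frac{|\lambda|!}{\prod_i \lambda_i!}\prod_{i<j}\frac{\lambda_i-\lambda_j}{\lambda_i+\lambda_j}$; both are standard and yield the same product after the telescoping you describe.
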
 

\begin{proof}
    Lemma~\ref{lem:clan-chains-vs-inv-chains} gives a $2^q$-to-$1$ correspondence between maximal chains in $\Clan_{p,q}$ and reduced involution words of $w_{p,q}$ which preserves the labeling of covering relations,
    \begin{equation} \label{eq:all-clan-stanley}
        \sum_{\substack{\gamma \in \Clan_{p,q} \\ \text{$\gamma$ matchless}} } F_\gamma = 2^q \hat F_{w_{p,q}} = 2^q P_{(n-1,n-3,\ldots,n-2q+1)},
    \end{equation}
    where the second equality holds by Lemma~\ref{lem:dominant}.

    Let $g^{\lambda}$ denote the number of \emph{unmarked} standard shifted tableaux of shape $\lambda$: fillings of the shifted shape of $\lambda$ by $1, 2, \ldots, |\lambda|$ which are strictly increasing across rows and down columns. As in Proposition~\ref{prop:reduced-word-count}, the coefficient of $x_1 x_2 \cdots x_{pq}$ on the lefthand side of \eqref{eq:all-clan-stanley} is the number of maximal chains in $\Clan_{p,q}$, while the coefficient on the right side is $2^q 2^{|\lambda|-\ell(\lambda)} g^{\lambda} = 2^{pq} g^{\lambda}$.  The \emph{shifted hook length formula} \cite{thrall-shifted-hook-length} computes $g^{\lambda}$, as follows. The \emph{doubled shape} $\tilde\mu$ of a strict partition $\mu$ is obtained by placing a copy of the shifted shape of $\mu$ to the right of its transpose so that their main diagonals are adjacent (but are not identified): 
    \begin{equation*}
        \mu = \young(\hfil\hfil\hfil\hfil,:\hfil\hfil\hfil,::\hfil) \qquad \leadsto \qquad \tilde\mu =  \young(\cdot\hfil\hfil\hfil\hfil,\cdot\cdot\hfil\hfil\hfil,\cdot\cdot\cdot\hfil,\cdot\cdot)
    \end{equation*}
    where $\cdot$ marks the new boxes. The shifted hook length formula is then $g^{\mu} = |\mu|! / \prod_{(i,j) \in \mu} h_{ij}$, where $h_{ij}$ is the usual hook length of box $(i,j)$ in $\tilde{\mu}$, but $(i,j)$ only runs over those boxes corresponding to the original shifted shape $\mu$. In the example above, $g^{\mu} = 8!/(7\cdot 5 \cdot 4 \cdot 2 \cdot 4 \cdot 3 \cdot 1 \cdot 1)$.

    When $\lambda = (p+q-1, p+q-3, \ldots, p-q+1)$, this formula gives
    \begin{equation*}
        2^{pq} g^{\lambda} = 2^{pq} (pq)!\left[\prod_{i = 1}^{q-1} 2^{q-i}\frac{(p+q-2i)!}{(p-i)!} \prod_{i=1}^q \frac{(p+q-2i+1)!}{2^{q-i}(q-i)!}  \right]^{-1},
    \end{equation*}
    where the $i$\th factor in the first product is the product of the hook lengths in row $i$ and columns $1, \ldots, q-1$, and the $i$\th factor in the second product is the product of the remaining hook lengths in row $i$.

\end{proof}

As a corollary of Theorem~\ref{thm:maximal-chain-count} we obtain an interesting symmetric function identity, which also appears in \cite[\sectionsymbol 4.6]{dewitt} and \cite[\sectionsymbol 7]{worley} in a slightly different form.
\begin{cor}
    \begin{equation*}
        \sum_{\lambda \subseteq [p] \times [q]} s_{\lambda} s_{(\lambda^\vee)^t} = Q_{(p+q-1, p+q-3, \ldots, p-q+1)}.
    \end{equation*}
\end{cor}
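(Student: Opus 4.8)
The plan is to observe that this corollary is essentially the first displayed identity in Theorem~\ref{thm:maximal-chain-count} rewritten in terms of ordinary Schur polynomials, so the proof is short and its only real content is an elementary bookkeeping identity for rectangular complements.

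First I would recall that the assignment $\gamma \mapsto \lambda^+(\gamma)$ is a bijection from the matchless $(p,q)$-clans onto the set of partitions $\lambda$ with $\lambda \subseteq [p]\times[q]$, as noted immediately after the definition of $\lambda^{\pm}$. I would also point out that the left-hand side of the corollary is symmetric under interchanging $p$ and $q$ (via $\lambda \mapsto \lambda^t$), so there is no loss in assuming $p \geq q$, matching the convention of Theorem~\ref{thm:maximal-chain-count}.

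Next, fix a matchless $\gamma$ and set $\lambda := \lambda^+(\gamma)$. Theorem~\ref{thm:matchless-clan-stanley} gives $F_\gamma = s_{\lambda^+(\gamma)} s_{\lambda^-(\gamma)}$, and Proposition~\ref{prop:symmetries} gives $\lambda^-(\gamma) = (\lambda^t)^\vee$. The one small point to check is that transposing a diagram and complementing in the ambient rectangle commute: complementing $\lambda \subseteq [p]\times[q]$ and then transposing yields the same partition as transposing $\lambda$ (landing in $[q]\times[p]$) and then complementing there, i.e.\ $(\lambda^\vee)^t = (\lambda^t)^\vee$. Hence $F_\gamma = s_\lambda\, s_{(\lambda^\vee)^t}$.

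Finally, summing over all matchless $\gamma \in \Clan_{p,q}$ and re-indexing the sum by $\lambda = \lambda^+(\gamma)$,
\[
\sum_{\lambda \subseteq [p]\times[q]} s_\lambda\, s_{(\lambda^\vee)^t} \;=\; \sum_{\substack{\gamma \in \Clan_{p,q} \\ \gamma \text{ matchless}}} F_\gamma \;=\; Q_{(n-1,n-3,\ldots,n-2q+1)}
\]
by Theorem~\ref{thm:maximal-chain-count}, and since $n - 2q + 1 = p - q + 1$ the right-hand side is $Q_{(p+q-1,p+q-3,\ldots,p-q+1)}$, as required. The main (and essentially only) obstacle is verifying the identity $(\lambda^\vee)^t = (\lambda^t)^\vee$, which is a one-line diagram chase; everything substantive has already been carried out in Theorems~\ref{thm:matchless-clan-stanley} and~\ref{thm:maximal-chain-count}.
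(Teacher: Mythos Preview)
Your proof is correct and follows essentially the same route as the paper's: both arguments combine $F_\gamma = s_{\lambda^+(\gamma)} s_{\lambda^-(\gamma)}$ with the bijection $\gamma \mapsto \lambda^+(\gamma)$ and Theorem~\ref{thm:maximal-chain-count}, the only work being to identify $\lambda^-(\gamma)$ with $(\lambda^\vee)^t$. The paper verifies $\lambda^-(\gamma)^t = \lambda^+(\gamma)^\vee$ by a direct counting argument on the positions of $+$'s and $-$'s, whereas you invoke Proposition~\ref{prop:symmetries} together with the commutativity $(\lambda^\vee)^t = (\lambda^t)^\vee$; these are two phrasings of the same elementary fact.
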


\begin{proof}
    When $\gamma$ is matchless, $F_{\gamma} = s_{\lambda^+(\gamma)} s_{\lambda^-(\gamma)}$ by Theorem~\ref{thm:matchless-clan-stanley}, where $\lambda^+(\gamma)_i$ is the number of $-$'s following the $i$\th $+$ in $\gamma$, and $\lambda^-(\gamma)_i$ is the number of $+$'s following the $i$\th $-$. Now,
    \begin{align*}
        \lambda^-(\gamma)^t_j &= \#\{i : \lambda^-(\gamma)_i \geq j\}\\
        &= \text{number of $-$'s followed by at least $j$ $+$'s}\\
        &= q-(\text{number of $-$'s followed by at most $(j{-}1)$ $+$'s})\\
        &= q-(\text{number of $-$'s following the $(p{-}j{+}1)$\th $+$})\\
        &= q - \lambda^+(\gamma)_{p-j+1} = \lambda^+(\gamma)^\vee_j.
    \end{align*}
    That is, $\lambda^-(\gamma)^t = \lambda^+(\gamma)^\vee$. The map $\gamma \mapsto \lambda^+(\gamma)$ is a bijection between matchless $(p,q)$-clans and partitions contained in $[p] \times [q]$, so the corollary follows from Theorem~\ref{thm:maximal-chain-count}.

\end{proof}

\bibliographystyle{plain}
\bibliography{../bib/algcomb}

\end{document}